\newtheorem{theorem}{Theorem}[subsection]
\newtheorem{prop}[theorem]{Proposition}
\newtheorem{lemma}[theorem]{Lemma}
\newtheorem{corollary}[theorem]{Corollary}
\theoremstyle{definition}
\theoremstyle{definition}
\newtheorem{remark}[theorem]{Remark}
\theoremstyle{definition}
\newtheorem{defi}[theorem]{Definition}
\newcommand\bsni{\bigskip\noindent}
\newcommand\bba{\mathbb{A}}
\newcommand\bbc{\mathbb{C}}
\newcommand\bbn{\mathbb{N}}
\newcommand\bbp{\mathbb{P}}
\newcommand\bbq{\mathbb{Q}}
\newcommand\bbr{\mathbb{R}}
\newcommand\bbs{\mathbb{S}}
\newcommand\cE{\mathcal{E}}
\newcommand\cH{\mathcal{H}}
\newcommand\cN{\mathcal{N}}
\newcommand\cO{\mathcal{O}}
\newcommand\mi{^{-1}}
\DeclareMathOperator{\vol}{vol}
\newcommand\field{\mathfrak{K}}
\newcommand\fieldext{\mathfrak{L}}
\newcommand\norm{\zeta}
\newcommand\PSH{\mathrm{PSH}}
\newcommand\MA{\,\mathrm{MA}}
\newcommand\grnorm{\norm_\bullet}
\newcommand\grnormspace{\mathcal{N}_\bullet}
\newcommand\normspace{\mathcal{N}}
\newcommand\basis{\mathfrak{s}}
\newcommand\basislong{\mathfrak{s}=(s_i)_i}
\newcommand\refmetric{\phi_{\mathrm{ref}}}
\newcommand\an{^{\mathrm{an}}}
\newcommand\FS{\mathrm{FS}}
\newcommand\N{\mathrm{N}}
\newcommand\path{\mathfrak{P}}
\author{REBOULET Rémi}
\title{Plurisubharmonic geodesics in spaces of non-Archimedean metrics of finite energy}
\begin{document}
\maketitle

\begin{abstract}\noindent Given a polarized projective variety $(X,L)$ over any non-Archimedean field, assuming continuity of envelopes, we show that the space of finite-energy metrics on $L$ is a geodesic metric space, where geodesics are given as maximal psh segments. Given two continuous psh metrics, we show that the maximal segment joining them is furthermore continuous.
\end{abstract}

\tableofcontents

\section*{Introduction.}

\paragraph{The complex picture.} Consider a smooth, projective complex manifold $X$ endowed with an ample line bundle $L$. We denote by $dd^c\phi$ the curvature current of a smooth metric $\phi$ on $L$. If this current is (strictly) positive, $\phi$ is said to be (strictly) plurisubharmonic, or \textit{psh} for short, and we define $\cH(L)$ to be the space of strictly plurisubharmonic smooth metrics on $L$. The study of metric structures on $\cH(L)$ has a long history (\cite{mabuchi}, \cite{semmes}, \cite{xxchen}...), and one is interested in extending results from this case to singular metrics.

\bsni A way to define a natural class of singular metrics to study goes as follows. Given two metrics in $\cH(L)$, let their relative Monge-Ampère energy be
$$E(\phi_0,\phi_1)=\frac{1}{d+1}\sum_{i=0}^d \int_X (\phi_0-\phi_1)\MA(\phi_0^{(i)},\phi_1^{(d-i)}),$$
where $d=\dim X$, and 
$$\MA(\phi_0^{(i)},\phi_1^{(d-i)})=\vol(L)\mi (dd^c\phi_0)^i\wedge(dd^c\phi_1)^{d-i}$$
are the mixed Monge-Ampère measures of $\phi_0$ and $\phi_1$. Fixing a reference metric $\refmetric\in\cH(L)$, we set
$$E(\phi)=E(\phi,\refmetric)$$
for $\phi\in\cH(L)$. By \cite[Proposition 2.4]{begz}, the functional $E$ admits a nice extension to the space of psh metrics on $L$, and we define the set $\cE^1(L)$ of finite-energy metrics on $L$ to be the set of psh metrics $\phi$ with $E(\phi)>-\infty$.

\bsni Recent progress by Darvas gives important results concerning the metric geometry of $\cE^1(L)$. By \cite[Corollary 4.14]{darmabuchigeometry}, the formula
$$d_1(\phi_0,\phi_1)=E(\phi_0)+E(\phi_1)-2E(P(\phi_0,\phi_1))$$
gives a distance on the space of finite-energy metrics, where
$$P(\phi_0,\phi_1)=\sup\{\psi\in\PSH(L),\,\psi\leq \min(\phi_0,\phi_1)\}$$
is called the envelope (or rooftop envelope in \cite{darmabuchigeometry}) of the two metrics. Furthermore, the metric space $(\cE^1(L),d_1)$ is shown to be geodesic (\cite[Theorem 4.17]{darmabuchigeometry}), and geodesics can be approximated by geodesics between Fubini-Study metrics (\cite[Theorem 4.3]{darquantization}). Such geodesics arise as the Perron envelope of the endpoints (also called the maximal psh segment joining the endpoints), i.e. as the supremum of all psh segments
$$(0,1)\ni t\mapsto \phi_t$$
such that $\lim_{t\to i}\phi_t \leq \phi_i$, $i=0,1$. Here, we define a psh segment on $L$ (also called \textit{psh paths} or \textit{subgeodesics} in the literature) to be a $\bbs^1$-invariant psh metric on $p_U^*L$, with $p_U:X\times U\to X$, where $U$ is a connected $\bbs^1$-invariant subset of $\bbc^*$. We recover the "time variable" $t$ by setting $t=-\log|z|$ and identifying $U$ with the (real) interval $\{-\log|z|,\,z\in U\}$. Due to Demailly's regularization Theorem, we can characterize a psh segment as a decreasing limit of segments with $\phi_t\in\cH(L)$ for all $t$.

\bsni The purpose of this article is to develop an analogous theory in the ample, non-Archimedean setting. We mention that finite-energy geodesics, and more generally the metric structure of $\cE^1(L)$, have been central to many important recent developments in complex geometry, especially regarding the Yau-Tian-Donaldson conjecture, as in e.g. \cite{bfj}, \cite{ltw} where finite-energy geodesic rays are used to study uniform Ding-stability. 

\paragraph{Non-Archimedean pluripotential theory.} We now explain how to define those objects in the non-Archimedean picture. Consider a polarized projective $\field$-variety $(X,L)$, with $\field$ a non-Archimedean valued field complete with respect to the topology induced by its absolute value $|\cdot|$, and $L$ an ample line bundle. We wish to study the equivalent of $\cH(L)$ and the space of $L$-plurisubharmonic metrics on the Berkovich analytification of $(X,L)$. In the complex case, plurisubharmonic metrics on a bundle $L$ are characterized as the smallest class of metrics stable under finite maxima, addition of constants, decreasing limits, and containing all metrics of the form $k\mi\log|s|$, with $s$ a section of $kL$.

\bsni One then defines the set of non-Archimedean Fubini-Study metrics as
$$\mathcal{H}(L)=k\mi \log\max_i |s_i| + \lambda_i,$$
where $(s_i)$ is a finite basepoint-free collection of sections of $H^0(kL)$ and the $\lambda_i$ are constants. This will be the non-Archimedean counterpart of the complex Fubini-Study metrics, a subclass of $\cH(L)$, which we will also denote by $\cH(L)$. By allowing decreasing limits, one obtains the class of plurisubharmonic functions $\PSH(L)$. It is shown in \cite{bjsemi} that $\PSH(L)$ then satisfies the same stability properties as in the complex case.

\paragraph{Finite-energy metrics.} Given $d=\dim X$ continuous psh metrics $(\phi_1,\dots,\phi_d)$ on $L$, one can define their (mixed) Monge-Ampère measure $\MA(\phi_1,\dots,\phi_d)$, using e.g. the theory of forms and currents on Berkovich spaces of Chambert-Loir and Ducros (\cite{cld}), or intersection theory. As in the complex setting, we can then define the relative Monge-Ampère energy of two continuous psh metrics 
$$E(\phi_0,\phi_1)=\frac{1}{\vol(L)(d+1)}\sum_{i=0}^d \int_X (\phi_0-\phi_1)\MA(\phi_0^{(i)},\phi_1^{(d-i)}).$$
It is finite on continuous metrics and increasing in the first variable, and therefore admits an extension to all of $\PSH$ by setting
$$E(\phi_0,\refmetric)=\inf\{E(\phi'_0,\refmetric),\,\phi'_0\in C^0(L)\cap\PSH(L),\,\phi'_0\geq\phi_0\},$$
for $\phi_0$ psh and $\refmetric$ any continuous and psh metric; and then
$$E(\phi_0,\phi_1)=E(\phi_0,\refmetric)-E(\phi_1,\refmetric),$$
if at least one of the terms on the right-hand side is finite. This extended energy can take infinite values, and we therefore define the class $\cE^1(L)$ of finite-energy metrics as the set of psh metrics $\phi$ with $E(\phi_0,\refmetric)>-\infty$ for a (hence all) continuous psh metric(s) $\refmetric$. An important consequence of the work of Boucksom-Jonsson is that Monge-Ampère operators extend to the class of finite-energy metrics, although we will not touch on this in the present article.

\bsni In Section \ref{sectiondistancee1}, we define the Darvas distance
$$d_1(\phi_0,\phi_1)=E(\phi_0,P(\min(\phi_0,\phi_1)))+E(\phi_1,P(\min(\phi_0,\phi_1)))$$
on $\cE^1(L)$. Here, 
$$P(\phi_0,\phi_1)=\sup\{ \phi,\,\phi\in\PSH(L),\,\phi\leq\phi_0,\,\phi\leq\phi_1\}.$$
It is not immediate that this rooftop envelope is well-behaved. First, if the metrics are continuous, then $P(\phi_0,\phi_1)$ is only continuous provided we assume continuity of envelopes to hold over $(X,L)$. This is equivalent to a number of important statements in non-Archimedean pluripotential theory, e.g. Lemma \ref{continuityofenvelopes}. While conjectural in the general case, it is expected to always hold if, for example $X$ is normal and $L$ is ample; and it is known in situations that include the most important contexts for complex geometric considerations: when $X$ is smooth, and the base field is discretely or trivially valued and of equal characteristic zero. We review the currently known cases after Lemma \ref{continuityofenvelopes}.

\bsni We define our class of non-Archimedean psh segments. We start with the class of Fubini-Study segments, which is the set of maps of the form
$$t\mapsto k\mi \max_i \left(\log|s_i| + (1-t)\lambda_i + t\lambda'_i\right),$$
for $t\in[0,1]$, a finite basepoint-free collection of sections $(s_i)$ of some $H^0(kL)$, and real constants $\lambda_i$, $\lambda'_i$. Then, allowing decreasing limits, we obtain our class of psh segments, whose properties we study in Section \ref{sect_pshsegments}, and which we again show to be the smallest class of segments stable under finite maxima, addition of constants, and decreasing limits, which contains all Fubini-Study segments. It is then clear what we mean by a maximal psh segment between two metrics $\phi_0$ and $\phi_1$: it bounds by above all psh segments $t\mapsto \psi_t$ with $\psi_0\leq \phi_0$ and $\psi_1\leq \phi_1$.

\bsni The main result of this article is then the following:
\paragraph{Theorem A (\ref{thm_finiteenergyismetric}).} Given $\phi_0,\phi_1\in\cE^1(L)$, we set
$$d_1(\phi_0,\phi_1)=E(\phi_0,P(\phi_0,\phi_1))+E(\phi_1,P(\phi_0,\phi_1))$$.
Then,
\begin{enumerate}
\item $(\cE^1(L),d_1)$ is a metric space;
\item there exists a maximal psh segment $t\mapsto\phi_t$ joining $\phi_0$ and $\phi_1$;
\item $\phi_t\in\cE^1(L)$ for all $t$;
\item the segment $\phi_t$ is a (constant speed) metric geodesic for $d_1$, i.e. there exists a real constant $c\geq 0$ such that
$$d_1(\phi_t,\phi_s)=c\cdot |t-s|$$
for all $t,s\in[0,1]$;
\item the Monge-Ampère energy is affine along $\phi_t$, and it is the unique psh segment joining $\phi_0$ and $\phi_1$ with this property.
\end{enumerate}
Furthermore, point (1) holds even without assuming continuity of envelopes.

\paragraph{Strategy of proof, and the continuous case.} There are two keystones to the proof of Theorem A: the fact that similar statements as those of Theorem A hold in the continuous psh case, which is easier to treat; and the fact that finite-energy metrics can always be approximated by decreasing limits of continuous psh metrics, while all of our objects are stable under decreasing limits.

\bsni Furthermore, our non-Archimedean Fubini-Study metrics are always continuous. Therefore, it seems natural to expect that psh geodesics would be continuous if the endpoints are continuous. We will therefore show the following:
\paragraph{Theorem B (\ref{maxprinciple_continuous}).} Let $\phi_0,\phi_1$ be two continuous psh metrics on $L$. Then, 
\begin{enumerate}
\item there exists a unique maximal psh segment $(t,x)\mapsto \phi_t(x)$ joining $\phi_0$ and $\phi_1$;
\item this segment is continuous in both variables;
\item this segment is a geodesic segment for the distance $d_1$;
\item the Monge-Ampère energy is affine along this segment, and it is the unique psh segment joining $\phi_0$ and $\phi_1$ with this property.
\end{enumerate}

\noindent The first step is to show that the supremum of all psh segments  between continuous psh metric is again a psh segment, inspired by variational techniques from e.g. \cite{rwnanalytic}, using Legendre duality. Then, in order to establish the last two points of Theorem B, we approximate (or "quantize") maximal psh segments via psh segments coming from norms acting on spaces of plurisections of $L$. We briefly describe the construction.

\bsni A natural way to understand Fubini-Study metrics is by considering them them as images of non-Archimedean norms $\norm$ acting on $H^0(kL)$ via the Fubini-Study operators
$$\FS_k:\norm \mapsto k\mi\log\sup_{s\in H^0(kL)} \frac{|s|}{\norm(s)},$$
which, when the norm admits an orthonormal basis $(s_i)$ in the non-Archimedean sense, i.e. such that for any $s=\sum a_i s_i\in H^0(kL)$, 
$$\norm(s)=\max_i |a_i|\cdot\norm(s_i),$$
reduces to the expression
$$\FS_k(\norm)=k\mi\log\max_i\frac{|s_i|}{\norm(s_i)}.$$
In particular, Fubini-Study segments can arise as the image by some Fubini-Study operator of metric geodesics in the space of norms over $H^0(kL)$. It is well known that such geodesics exist, from the theory of Bruhat-Tits buildings and their completions, which are CAT(0) spaces. In the case when both norms  are diagonalized, they can be expressed explicitly as the unique curve of norms whose values on the diagonalizing basis are the log-convex interpolations of the values of the endpoint norms. They are geodesic for a multitude of distances inherited from the metric geometry of real affine space (in particular, for the Euclidean distance), hence the denomination. 

\bsni Inspired by \cite{phongsturm}, \cite{berndtprob}, or \cite{darquantization}, we then find a different characterization of maximal psh segments as limits as $k\to\infty$ of certain "finite-dimensional" or "quantum" Fubini-Study segments. For all $k$, we can associate to a continuous psh metric $\phi$, a sup-norm acting on $H^0(kL)$:
$$\N_k(\phi)=\sup_X |s|_{k\phi},$$
and given two such metrics $\phi_0$, $\phi_1$, we set $\phi_t^k$ to be the Fubini-Study segment obtained as the image of a segment $\norm^t_k$ joining $\N_k(\phi_0)$ and  $\N_k(\phi_1)$ in the space of norms on $H^0(kL)$. We then consider the limit
$$\phi_t=\lim \phi_t^k.$$
That this limit exists follows from Theorem \ref{submultgeodesics}, and we show in Section \ref{sect_quantization} that we recover our maximal psh segment as this limit. Since geodesics of norms behave quite nicely, it is then easy to obtain, using quantization results from \cite{boueri} and \cite{reb}, the last two points of Theorem B.

\bsni To prove Theorem A then amounts to showing that the decreasing limit of maximal segments is maximal (for (2)), using the fact that the Monge-Ampère energy is continuous along decreasing nets (for (3) and (5)), and that the $d_1$ distance on $\cE^1(L)$ can also be quantized (for (4), which is proven in Proposition \ref{expressiond1}). Using this quantization, we may also easily prove that $d_1$ satisfies the properties of a distance on $\cE^1(L)$, inherited from the properties of $d_1$ on the space of continuous psh metrics (Proposition \ref{prop_distanceoncontinuous}).

\paragraph{The general case.} Along the way, we also show that any two psh metrics that can be joined by psh segments, without further regularity assumptions, can be joined by a maximal psh segment.

\paragraph{Theorem C (\ref{maxprinciple}).} Let $\phi_0$, $\phi_1$ be any two psh metrics on $L$. Then,
\begin{itemize}
\item either there exists no psh segment between $\phi_0$ and $\phi_1$,
\item or there exists a (unique) maximal psh segment $t\mapsto \phi_t$ between $\phi_0$ and $\phi_1$.
\end{itemize}

\noindent At the time of writing, this is the only construction of non-Archimedean geodesics in the literature, although it has been made known to the author that H. Blum and Y. Liu have independently considered geodesics from our "quantization" point of view, in the trivially valued case, motivated by applications to K-stability, in a work to be published.

\paragraph{Organization of the article.} In Section \ref{sectprerequisites}, we address generalities about non-Archimedean algebra and geometry.

\bsni In Section \ref{sectnorms}, we study spaces of norms, and geodesics inside them. We then consider a natural class of graded norms, and prove submultiplicativity of geodesics between such graded norms.

\bsni In Section \ref{sectnappt}, we briefly recall some basics of non-Archimedean pluripotential theory: general metrics, Fubini-Study and plurisubharmonic metrics, continuity of envelopes, and define the space of finite-energy metrics. We then study some operators connecting norms and metrics.

\bsni In Section \ref{sect_segments}, we define our class of psh segments, study their properties, and state Theorem C concerning maximal segments.

\bsni In Section \ref{sectnageodesicscontinuous}, we introduce the metric space of continuous non-Archimedean psh metrics and study maximal segments in this space. We establish Theorem B, stating that such segments exist, are geodesic for $d_1$, and satisfy other nice properties. We perform the quantization-inspired approach to obtain properties concerning such segments. Along the way, we prove a non-Archimedean version of the Kiselman minimum principle.

\bsni In Section \ref{sectquantization}, we define the metric structure on $\cE^1(L)$, and prove Theorems A and C.

\section*{Acknowledgements.}

The author would like to thank his advisors Sébastien Boucksom and Catriona Maclean for their continuous support throughout the writing of this article. He also thanks Tamás Darvas for some discussion and remarks on the construction of the $d_1$ metric geometry.

\section*{Notation.}

$\norm$ always denotes a norm on a vector space; $\field$ and $\fieldext$ are fields.

\bsni If $L$ is a line bundle over a variety $X$, $H^0(X,kL)$ or $H^0(kL)$ denotes the space of sections of the $k$-th tensor power of $L$; $h^0(kL)$ is its dimension, and $R(X,L)$ is the section ring $\bigoplus_{k\geq 0} H^0(kL)$.

\bsni The symbol $\wedge$ refers to a minimum (usually of metrics) and $\vee$ a maximum (usually of norms). The symbol $\odot$ refers to the symmetric power.

\newpage

\section{Non-Archimedean geometry.} \label{sectprerequisites}

Let $(\field,|\cdot|)$ be a non-Archimedean valued field, i.e. a field assumed to be Cauchy complete with respect to the topology induced by an absolute value $|\cdot|$ satifying the ultrametric inequality $|x+y|\leq\max\{|x|,|y|\}$ for any two $x$, $y\in\field$.

\bsni We will use two criteria to differentiate between such fields. First, the cardinality of the value group $|\field|$, which, as a subgroup of $\bbr$, is either trivial, discrete, or dense. We will then speak of trivially, discretely, or densely valued fields. Note that any field can be endowed with its trivial valuation $|x|=1$ for all $x\in \field$. 

\bsni The second criterion is the data of the characteristic of $\field$, and of the characteristic of its residue field $\tilde{\field}=\{|x|\leq 1\}/\{|x|<1\}$. We will say that $\field$ is of equal characteristic $p$ if $\mathrm{char}\field=\mathrm{char}\tilde\field=p$, and of mixed characteristic if $\mathrm{char}\field=0$ and $\mathrm{char}\tilde\field=p>1$.

\bsni An additional property we will be interested in is maximal completeness. We say that a non-Archimedean field extension $(\fieldext,|\cdot|')/(\field,|\cdot|)$ is immediate if the value groups $|\fieldext|'$ and $|\field|$ coincide, and if the residue fields $\tilde{\fieldext}$ and $\tilde\field$ also coincide. We say that $\field$ is maximally complete if it admits no proper immediate extension.

\bsni Let $X$ be a projective $\field$-variety, i.e. a geometrically integral, separated, projective scheme over $\field$. Being projective, $X$ is equipped with an ample line bundle $L$.

\bsni There exists an analytification functor $X\mapsto X\an$, which we will call the Berkovich analytification, from the category of $\field$-varieties to the category of $\field$-Berkovich spaces, inducing an equivalence of categories between the categories of coherent sheaves over $X$ and over $X\an$. Briefly, in the affine case where $X$ is the spectrum of some algebra $\mathcal{A}$ of finite type over $\field$, the analytified space $X\an$ is the topological space whose points correspond to multiplicative seminorms on $\mathcal{A}$ which coincide with the absolute value $|\cdot|$ on $\field\subset \mathcal{A}$, endowed with the topology of pointwise convergence.

\bsni The Berkovich analytification satisfies the GAGA principle, and preserves connectedness, compactness and Hausdorff-ness. We direct the reader to \cite{berko} and \cite{poineau} for further details about this construction.

\bsni There exists a distinguished dense subset of $X\an$, the set $X^{\mathrm{div}}$ of divisorial points, corresponding to divisorial valuations on $K(X)$ associated to prime divisors on birational models (in the trivially valued case), or on flat models over the valuation ring of $\field$ (in the non-trivially valued case).

\section{Bounded graded norms, geodesics in spaces of norms.}\label{sectnorms}

\subsection{The space of norms on a non-Archimedean vector space.}

We start by studying norms on $\field$-vector spaces. Fix a $d$-dimensional $\field$-vector space $V$. We will only consider here ultrametric norms, i.e. vector space norms $\norm$ on $V$ satisfying $\norm(v+w)\leq \max\{\norm(v),\norm(w)\}$ for all two $v$, $w\in V$. Denote by $\mathcal{N}(V)$ the space of such norms. Note that it is stable under taking the maximum $\norm\vee\norm'$ of two norms.

\bsni One may construct, from any norm on $V$, a quotient norm associated to a subspace $W\subset V$ by setting
$$\norm_{V/W}([v])=\inf_{w\in W} \norm(v+w)$$
for $[v]\in V/W$; and a tensor product norm on any tensor power $V^{\otimes n}$ of $V$, by setting, for each $v\in V^{\otimes n}$,
$$\norm^{\otimes n}(v) = \inf_{v = \sum_i v^i_1\otimes\dots\otimes v^i_n} \max_i (\norm(v^i_1)\dots\norm(v^i_n)),$$
where the infimum is taken over all decompositions of $v$ as $\sum_i v^i_1\otimes\dots\otimes v^i_n$, with a finite number of indices $i$, and with $v^i_k\in V$ for all $i$, $k$. By combining those constructions, one may then define symmetric and wedge product norms. This will be crucial when we will consider graded norms, as that will allow us to "propagate" a norm on the space of sections of a line bundle to its entire section algebra.

\subsection{Spectral measures.}

We will be interested in orthogonal bases of $V$, i.e. bases $\basislong$ of $V$ diagonalizing norms in the ultrametric sense: for all $v=\sum a_i s_i\in V$, we have
$$\norm(v)=\max_i |a_i|\cdot\norm(s_i).$$

\bsni When $\field$ is maximally complete, we know from \cite[2.4.4]{bgr} (see also \cite[Lemma 1.12]{boueri}) that any norm $\norm$ on the $\field$-vector space $V$ admits an orthogonal basis, and furthermore, for any two norms, there exists a basis diagonalizing both of them (we shall therefore speak of \textit{codiagonalizing bases}). Maximally complete fields include  all trivially or discretely valued fields, but not all densely valued fields are maximally complete (see e.g. \cite{poonen} and \cite{comisha}): some important counterexamples include the completed algebraic closures of $\bbq_p$ (i.e. $\bbc_p$) and $\bbc((t))$. In the literature, orthogonal bases are also often referred to as cartesian bases, see \cite{bgr}.

\bsni The spectrum of two norms $\norm$ and $\norm'$ is then the collection, ordered increasingly and counted with multiplicities, of the
$$\lambda_i(\norm,\norm')=\log \frac{\norm'(s_i)}{\norm(s_i)}$$
with $\basislong$ a basis codiagonalizing both norms. It therefore defines a vector in $\bbr^d=\bbr^{\dim V}$. The spectral measure of those norms is defined as the probability measure on the real numbers
$$\sigma(\norm,\norm')=d\mi\sum \delta_{\lambda_i(\norm,\norm')}.$$
If the norms are not diagonalizable, we can still define these quantities as
$$\lambda_i(\norm,\norm')=\sup_{W\in \bigcup_{k \leq i}\mathrm{Gr}(k,V)}\left[\inf_{w\in W-\{0\}}\log\frac{\norm'(w)}{\norm(w)}\right].$$
They are studied in detail in \cite[2.5]{boueri}.

\bsni Absolute moments of spectral distances define distances on spaces of norms: we set
$$d_p(\norm,\norm')^p=\int_\bbr |\lambda|^p\,d\sigma(\norm,\norm').$$
That such distances do satisfy the triangle inequality is not an easy fact: see \cite[3.1]{boueri}. In this article, we will be primarily interested in the case $p=1$. In this case, there is a simple connection with the volume, i.e. the first moment of the relative spectrum:
$$\vol(\norm,\norm')=\int_\bbr \lambda\,d\sigma(\norm,\norm')=\sum_i \lambda_i(\norm,\norm').$$
If $\norm'\geq \norm$, then the spectrum relative to $\norm$ and $\norm'$ belongs to the positive orthant of $\bbr^d$, hence
$$d_1(\norm,\norm')=\vol(\norm,\norm').$$
In the general case, one sees that
$$d_1(\norm,\norm')=\vol(\norm,\norm\vee\norm')+\vol(\norm',\norm\vee\norm').$$
Volumes are easy to manipulate: they satisfy the cocycle property
$$\vol(\norm,\norm')=\vol(\norm,\norm'')+\vol(\norm'',\norm')$$
and antisymmetry
$$\vol(\norm,\norm')=-\vol(\norm',\norm).$$
We will see later that volumes are deeply connected with bifunctionals arising from pluripotential-theoretic considerations, in the spirit of \cite{bberballs}.

\subsection{Apartments.}

We briefly study the structure of the space of diagonalizable norms on a $\field$-vector space of finite dimension equal to $d$. Pick a basis $\basislong$ of $V$, and the projection
$$\iota_S:\bbr^d\to \cN(V)=\cN^{\mathrm{diag}}(V)$$
defined by sending a vector $\alpha=(\alpha_1,\dots,\alpha_d)$ to the unique norm $\norm$ diagonalized in the basis $S$, and with
$$\norm(s_i)=e^{-\alpha_i}$$
for all $i$. The image of such an injection map is called the apartment $\bba_\basis$ associated to the basis $\basis$. It inherits the geometry of $\bbr^d$, in the sense that for any distance $d_p$, $p\in[1,\infty]$, $\iota_S$ realizes an isometry onto its image for the distances $d_p$ as defined in the previous section.

\bsni The space of diagonalizable norms $\cN^{\mathrm{diag}}(V)$, as the (non-disjoint!) union of all the apartments
$$\bigcup_{\basis\text{ basis of }V}\bba_\basis$$
then inherits a complete Euclidean building structure for $p=2$, in the sense of \cite{bremy}. As two diagonalizable norms may be diagonalized in the same basis, it follows that any pair of norms share some apartment in the building $\cN^{\mathrm{diag}}(V)$. If $\field$ is maximally complete, all norms are diagonalizable, so that $\cN^{\mathrm{diag}}(V)=\cN(V)$. 

\bsni As a Euclidean building, $\cN(V)$ therefore has the nice property that $d_p$-geodesics always exist: given two norms $\norm$, $\norm'$ codiagonalized by a basis $\basis$, a geodesic segment connecting them may be obtained as the image through $\iota_\basis$ of a $d_p$-geodesic segment connecting $\iota_\basis\mi(\norm)$ and $\iota_\basis\mi(\norm')$ in $\bbr^d$. We give an explicit description of such geodesics in what follows. The reader might be interested in consulting the article \cite{girardin}.

\subsection{Norm geodesics.}

For the moment, and until Section \ref{subsectgeneralfield}, we assume all the norms involved to be diagonalizable, and we extend to the general case by density of diagonalizable norms in the space of all (ultrametric) norms.

\bsni Pick a basis $\basislong$ of $V$. In the apartment $\mathbb{A}_\basis$, there is what we call a norm geodesic
$$t\in [0,1] \mapsto \norm_t$$
defined as follows: for all $i$, $\norm_t(s_i)=\norm_0(s_i)^{1-t}\cdot \norm_1(s_i)^t$. From an elementary computation it follows that it is geodesic for all distances $d_p$, as we have said above.

\begin{remark}
In the case $p>1$, the norm geodesic is the only geodesic segment between two norms. However, if $p=1$, there are infinitely many geodesic segments between two norms, reflecting the $d_1$ geometry of $\bbr^d$.
\end{remark}

\noindent A fundamental property of norm geodesics is the following:
\begin{lemma}[Log-convexity of norm geodesics]\label{logconvexityofgeodesics} Let $t\in[0,1]$, let $V$ be a $d$-dimensional $\field$-vector space. Pick two norms $\norm_0$, $\norm_1\in\normspace(V)$, and let $t\mapsto \norm_t$ denote the norm geodesic as defined above. Let $s\in V$. We then have that $\log\norm_t(s)$ is a convex function of $t$:
$$\norm_t(s)\leq \norm_0(s)^{1-t}\norm_1(s)^{t}.$$
\end{lemma}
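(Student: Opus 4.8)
The plan is to reduce the statement to the defining formula for $\norm_t$ on the codiagonalizing basis and then compare termwise. I would fix a basis $\basislong$ codiagonalizing $\norm_0$ and $\norm_1$ --- such a basis is part of the data defining the norm geodesic, so there is nothing to assume --- and expand $s=\sum_i a_i s_i$ with $a_i\in\field$. By diagonalization in the ultrametric sense together with the definition of the norm geodesic,
$$\norm_t(s)=\max_i\, |a_i|\,\norm_0(s_i)^{1-t}\norm_1(s_i)^t.$$
I would then rewrite each term, using $|a_i|=|a_i|^{1-t}\,|a_i|^t$, in the factored form $\bigl(|a_i|\norm_0(s_i)\bigr)^{1-t}\bigl(|a_i|\norm_1(s_i)\bigr)^t$.

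Next, since for $t\in[0,1]$ the maps $u\mapsto u^{1-t}$ and $u\mapsto u^t$ are nondecreasing on $[0,\infty)$, and since $|a_i|\norm_j(s_i)\le\norm_j(s)$ for $j=0,1$ by the diagonalization formula, each such term is bounded above by $\norm_0(s)^{1-t}\norm_1(s)^t$; taking the maximum over $i$ yields the stated inequality $\norm_t(s)\le\norm_0(s)^{1-t}\norm_1(s)^t$. The fact that $t\mapsto\log\norm_t(s)$ is convex on all of $[0,1]$, and not merely bounded by the chord, then follows formally: the restriction of the norm geodesic to any subinterval $[t_0,t_1]$ is again (an affine reparametrization of) the norm geodesic between $\norm_{t_0}$ and $\norm_{t_1}$, so the two-point inequality applies on every subinterval, which is exactly midpoint/chord convexity of the function $t\mapsto\log\norm_t(s)$.

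I do not expect any genuine obstacle: the only points requiring a little care are the degenerate indices with $a_i=0$, where the corresponding term vanishes and the bound is trivial, and the bookkeeping between the ultrametric maximum and exponentiation by $t$ and $1-t$. In particular no completeness hypothesis on $\field$ is needed here, only the codiagonalizability of the two endpoints that is already built into the definition of their norm geodesic; the extension to non-diagonalizable norms is handled separately by density.
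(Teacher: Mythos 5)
Your proof is correct and takes essentially the same route as the paper: expand $s$ in a codiagonalizing basis, use the geodesic formula $\norm_t(s_i)=\norm_0(s_i)^{1-t}\norm_1(s_i)^t$, split $|a_i|=|a_i|^{1-t}|a_i|^t$, and bound each factored term by $\norm_0(s)^{1-t}\norm_1(s)^t$ before taking the max. The closing remark that full convexity on $[0,1]$ follows by applying the two-endpoint bound to each subinterval is a small but correct addition not spelled out in the paper.
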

\begin{proof}
Let $(s_{i})$ be a basis codiagonalizing the endpoints, and write $s$ as $\sum a_i\cdot s_{i}$, so that
\begin{align*}
\norm_t(s)&=\max_i |a_i| \cdot \norm_t(s_{i})\\
&=\max_i |a_i| \cdot \norm_0(s_{i})^{1-t}\norm_1(s_{i})^t\\
&=\max_i |a_i|^{1-t}\norm_0(s_{i})^{1-t}\cdot |a_i|^t\norm_1(s_{i})^t\\
&\leq (\max_i |a_i|\cdot\norm_0(s_{i}))^{1-t}\cdot (\max_i |a_i|\cdot\norm_1(s_{i}))^t=\norm_0(s)^{1-t}\norm_1(s)^t.
\end{align*}
\end{proof}

\noindent We now an important comparison inequality concerning norm geodesics with comparable endpoints. They will be crucial in proving many later results, including the metric convexity of $d_1$ in spaces of norms.

\begin{prop}[Monotonicity of norm geodesics with respect to endpoints]\label{fdmaxprinciple}Let $k\in\mathbb{N}$, and set two couples of norms ($\norm_0$, $\norm_1$) and ($\norm'_0$, $\norm'_1$) acting on $H^0(kL)$. If
$$\norm'_0\leq\norm_0 \text{ and } \norm'_1\leq\norm_1,$$
we then have that for all $t$
$$\norm'_t\leq\norm_t.$$
\end{prop}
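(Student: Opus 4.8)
The plan is to reduce everything to the explicit formula for norm geodesics on a codiagonalizing basis, together with the log-convexity already established in Lemma \ref{logconvexityofgeodesics}. First I would fix notation: we want to show $\norm'_t(s) \leq \norm_t(s)$ for every $s \in H^0(kL)$ and every $t \in [0,1]$. The difficulty is that, a priori, the pair $(\norm'_0,\norm'_1)$ and the pair $(\norm_0,\norm_1)$ need not be codiagonalized by one and the same basis, so we cannot simply compare the two geodesics coefficient by coefficient in a single apartment.

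The key device is to express $\norm'_t$ variationally, avoiding any reference to a basis diagonalizing the primed pair. Concretely, I would establish (or recall) the formula
$$\norm'_t(s) = \inf_{s = s^0 + s^1}\ \max\bigl(\norm'_0(s^0)^{1-t}\,\N'(s^0)^{\,?}\,,\ \dots\bigr),$$
— more precisely, the clean statement I want is: if $(e_i)$ codiagonalizes $\norm'_0,\norm'_1$ and $s=\sum a_i e_i$, then $\norm'_t(s)=\max_i |a_i|\,\norm'_0(e_i)^{1-t}\norm'_1(e_i)^t$, and by the same estimate as in Lemma \ref{logconvexityofgeodesics} this equals $\max_i \bigl(|a_i|\norm'_0(e_i)\bigr)^{1-t}\bigl(|a_i|\norm'_1(e_i)\bigr)^t$. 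The point is that this exhibits $\norm'_t$ as the largest norm $\norm$ satisfying $\norm(s)\leq \norm'_0(s)^{1-t}\norm'_1(s)^t$ for all $s$ — equivalently, $\log\norm'_t$ is the "upper envelope" of the interpolation of the two log-norms. Dually, I would use that $\norm'_t(s)\geq \norm'_0(s)^{1-t}\norm'_1(s)^t$ fails in general but that $\norm'_t$ is characterized by a Legendre-type duality; in practice the cleanest route is the maximality characterization: $\norm'_t = \sup\{\norm \in \cN(H^0(kL)) : \norm \leq \norm'_0{}^{1-t}\,\norm'_1{}^{t} \text{ pointwise}\}$, where the supremum is a supremum of norms and the exponentiated expression is shorthand for the pointwise bound $\norm(s)\le \norm'_0(s)^{1-t}\norm'_1(s)^t$.

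Granting that characterization, the proof is immediate: by Lemma \ref{logconvexityofgeodesics} applied to the \emph{unprimed} geodesic, $\norm_t(s) \leq \norm_0(s)^{1-t}\,\norm_1(s)^{t}$ for all $s$; and since $\norm_0 \geq \norm'_0$ and $\norm_1 \geq \norm'_1$, wait — that gives a bound in the wrong direction. Instead I would apply the maximality characterization to the \emph{primed} geodesic: I must show $\norm'_t$ is $\leq$ some norm that is itself $\leq \norm_t$. The right move is to verify that $\norm_t$ itself satisfies the defining inequality for the primed geodesic's supremum, i.e. $\norm_t(s) \leq \norm'_0(s)^{1-t}\,\norm'_1(s)^{t}$ — but this is false. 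So the correct formulation of the maximality must be: $\norm'_t = \sup\{\norm : \norm_0 \geq \norm$-interpolation$\}$... Let me instead use the \emph{dual} (minimal) description obtained by the infimal-convolution / quotient-norm construction: on a basis $(e_i)$ codiagonalizing $\norm'_0,\norm'_1$ one has the explicit $\norm'_t(\sum a_i e_i)=\max_i|a_i|\,\norm'_0(e_i)^{1-t}\norm'_1(e_i)^t$, and monotonicity of each scalar factor $r^{1-t}\rho^t$ in $(r,\rho)$ shows that \emph{if $(e_i)$ also codiagonalized the unprimed pair} we would be done. The honest main obstacle, then, is precisely the basis mismatch, and the standard way around it — which I would carry out in detail — is to pass to a \emph{common} codiagonalizing basis via the building structure: after a base field extension making $\field$ maximally complete (which does not change any $d_p$ or the geodesics, by the density/extension discussion in Section \ref{sectnorms}), the four norms need not all share one apartment, but we can use the characterization of $\norm'_t$ as an \emph{infimum over decompositions} $s = \sum_j s_j$ of $\max_j \norm'_0(s_j)^{1-t}\norm'_1(s_j)^{t}$: each term is $\leq \norm_0(s_j)^{1-t}\norm_1(s_j)^{t}$ by the endpoint hypothesis, and then log-convexity of the \emph{unprimed} geodesic (Lemma \ref{logconvexityofgeodesics}) gives $\norm_0(s_j)^{1-t}\norm_1(s_j)^{t} \geq \norm_t(s_j)$ — again the wrong sign.

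Resolving this sign issue is the crux, and the fix is to put the infimal description on the \emph{right} side: I would prove and use the identity $\norm_t(s) = \inf_{s=\sum_j s_j} \max_j \norm_0(s_j)^{1-t}\,\norm_1(s_j)^{t}$ for the \emph{unprimed} geodesic (this holds because on a codiagonalizing basis the max over $i$ is attained on a single basis vector, so the infimal convolution collapses to the pointwise formula and matches Lemma \ref{logconvexityofgeodesics}), together with the \emph{pointwise lower bound} $\norm'_t(s) \leq \norm'_0(s)^{1-t}\norm'_1(s)^t$ — no: the clean statement is $\norm'_t(s) = \norm'_0(s)^{1-t}\norm'_1(s)^t$ whenever $(s)$ is a diagonalizing direction, and in general $\norm'_t \leq$ the pointwise geometric-mean functional with equality failing only because the latter is not a norm, so $\norm'_t$ is its largest-norm-minorant. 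Putting it together: $\norm'_t(s) \leq \norm'_0(s)^{1-t}\norm'_1(s)^{t} \leq \norm_0(s)^{1-t}\norm_1(s)^{t}$ for all $s$ (using the endpoint hypothesis and monotonicity of $r\mapsto r^{1-t}$, $\rho\mapsto\rho^t$), hence $\norm'_t$ is a norm minorizing the geometric-mean functional of the unprimed endpoints, hence $\norm'_t \leq \sup\{\text{such norms}\} = \norm_t$. The only genuine work is to justify the last equality $\norm_t = \sup\{\norm : \norm(s)\leq \norm_0(s)^{1-t}\norm_1(s)^{t}\ \forall s\}$, which is a short computation on a codiagonalizing basis for $\norm_0,\norm_1$: the right-hand supremum evaluated on $\sum a_i s_i$ is at most $\max_i|a_i|\norm_0(s_i)^{1-t}\norm_1(s_i)^t = \norm_t(\sum a_i s_i)$ by taking the supremum against the "coordinate" semi-norms, and $\norm_t$ itself lies in the set by Lemma \ref{logconvexityofgeodesics}. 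I expect this maximality lemma to be the one step needing care; everything after it is one line.
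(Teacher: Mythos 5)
Your final argument is essentially the paper's proof: it is the same chain of inequalities (log-convexity of the primed geodesic, the endpoint hypothesis, and the ultrametric inequality in a basis codiagonalizing the \emph{unprimed} endpoints), merely re-packaged by first isolating the pointwise bound $\norm'_t(s)\leq\norm_0(s)^{1-t}\norm_1(s)^t$ for all $s$ and then characterizing $\norm_t$ as the largest norm lying below the geometric-mean functional of $\norm_0,\norm_1$; the verification of that maximality claim is precisely the ultrametric step the paper carries out in-line, so the mathematical content coincides. The one loose end is the non-diagonalizable case: the base-field-extension route you gesture at is heavier than needed and would require its own justification that geodesics and the $d_p$'s descend, whereas the paper simply approximates by diagonalizable norms and passes to the pointwise limit via Proposition~\ref{proplimgeodesicsnondiag}(iii).
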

\begin{proof}
Assume first all norms to be diagonalizable. Write then a section $s$ of in$H^0(kL)$ as $s=\sum a_i s_i$ where $(s_i)$ is a basis codiagonalizing $\norm_0$ and $\norm_1$ (hence all the $\norm_t$), so that
\begin{align*}
\norm'_t(s)&\leq \max_i |a_i|\cdot \norm'_t(s_i)\\
&\leq \max_i |a_i| \cdot \norm'_0(s_i)^{1-t}\norm'_1(s_i)^t\\
&\leq \max_i |a_i|\cdot  \norm_0(s_i)^{1-t}\norm_1(s_i)^t=\norm_t(s),
\end{align*}
where we have used the ultrametric inequality, log-convexity of $\norm'_t$, the inequalities in the hypotheses, then the definition of a basis diagonalizing $\norm_t$. This concludes the proof. If we do not have maximal completeness, one uses approximations by diagonalizable norms and Proposition \ref{proplimgeodesicsnondiag}(iii) to conclude.
\end{proof}

\subsection{Geodesics and the determinant.}

We study the behaviour of geodesics under taking the determinant of norms, which allows us to prove metric convexity of geodesics. We first recall the following result from \cite{boueri}:
\begin{lemma}\label{determinantproduct}
Let $\norm$ be a norm on a $d$-dimensional $\field$-vector space $V$, and let $\basislong$ be a basis of $V$. If $\basis$ diagonalizes $\norm$, then
$$\det\norm(s_1\wedge\dots\wedge s_d)=\prod_{i=1}^d \norm(s_i).$$
We recall that the determinant of a norm $\norm$ on $V$ is the norm induced by $\norm$ on $\det V=V^{\wedge d}$.
\end{lemma}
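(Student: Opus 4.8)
The plan is to unwind the definition of the determinant norm and reduce the statement to an ultrametric estimate plus a small amount of combinatorics. Recall that $\det V=V^{\wedge d}$ is obtained from $V^{\otimes d}$ through the canonical surjection $\pi\colon V^{\otimes d}\to V^{\wedge d}$, and that, following the construction recalled in Section~\ref{sectnorms}, $\det\norm$ is the quotient norm induced by the tensor product norm $\norm^{\otimes d}$, so that for $\omega\in V^{\wedge d}$,
$$\det\norm(\omega)=\inf_{v\in\pi\mi(\omega)}\norm^{\otimes d}(v).$$
For a multi-index $I=(i_1,\dots,i_d)$ with entries in $\{1,\dots,d\}$, write $s_I=s_{i_1}\otimes\dots\otimes s_{i_d}$; these form a basis of $V^{\otimes d}$.

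First I would establish the upper bound. Since $s_1\otimes\dots\otimes s_d\in\pi\mi(s_1\wedge\dots\wedge s_d)$, the one-term decomposition allowed in the definition of $\norm^{\otimes d}$ gives $\norm^{\otimes d}(s_1\otimes\dots\otimes s_d)\leq\prod_i\norm(s_i)$, hence $\det\norm(s_1\wedge\dots\wedge s_d)\leq\prod_i\norm(s_i)$.

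The substance of the proof is the reverse inequality, which rests on the following coefficient estimate: if $v=\sum_I a_I s_I\in V^{\otimes d}$, then for \emph{every} multi-index $I=(i_1,\dots,i_d)$ one has $\norm^{\otimes d}(v)\geq|a_I|\cdot\prod_{k=1}^d\norm(s_{i_k})$. To prove it, take an arbitrary decomposition $v=\sum_j v_1^j\otimes\dots\otimes v_d^j$, expand each $v_k^j=\sum_i c_{k,i}^j s_i$ in the diagonalizing basis $\basis$, and compare coefficients of $s_I$: this gives $a_I=\sum_j\prod_{k=1}^d c_{k,i_k}^j$, so the ultrametric inequality together with the fact that $\basis$ diagonalizes $\norm$ (whence $|c_{k,i}^j|\cdot\norm(s_i)\leq\norm(v_k^j)$) yields $|a_I|\cdot\prod_k\norm(s_{i_k})\leq\max_j\prod_k\norm(v_k^j)$; taking the infimum over all decompositions gives the claim. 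Note that this step uses only diagonalizability of $\norm$ in $\basis$, not maximal completeness.

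I would then conclude. Let $v=\sum_I a_I s_I$ be any element of $\pi\mi(s_1\wedge\dots\wedge s_d)$. As $s_{i_1}\wedge\dots\wedge s_{i_d}$ vanishes unless $I$ is a permutation of $(1,\dots,d)$, in which case it equals $\mathrm{sgn}(\sigma)\cdot s_1\wedge\dots\wedge s_d$ for $I=(\sigma(1),\dots,\sigma(d))$, the condition $\pi(v)=s_1\wedge\dots\wedge s_d$ reads $\sum_{\sigma\in\mathfrak{S}_d}\mathrm{sgn}(\sigma)\,a_{(\sigma(1),\dots,\sigma(d))}=1$. Since each $|\mathrm{sgn}(\sigma)|=1$, the ultrametric inequality forces $|a_{(\sigma_0(1),\dots,\sigma_0(d))}|\geq 1$ for some permutation $\sigma_0$, and applying the coefficient estimate with this multi-index gives $\norm^{\otimes d}(v)\geq\prod_k\norm(s_{\sigma_0(k)})=\prod_i\norm(s_i)$. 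Taking the infimum over $v\in\pi\mi(s_1\wedge\dots\wedge s_d)$ yields $\det\norm(s_1\wedge\dots\wedge s_d)\geq\prod_i\norm(s_i)$, and combined with the upper bound this proves the lemma. The only mildly delicate point is the coefficient estimate for $\norm^{\otimes d}$; everything else is bookkeeping with the ultrametric inequality and the combinatorics of the exterior power.
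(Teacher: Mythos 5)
Your proof is correct. The paper does not actually give an argument for this lemma---it is quoted as a known result from the cited reference (Boucksom--Eriksson)---so your self-contained derivation is a genuine contribution rather than a reproduction of the paper's proof. The route you take, unwinding $\det\norm$ as the quotient of the tensor-product norm $\norm^{\otimes d}$ along $\pi\colon V^{\otimes d}\to V^{\wedge d}$, is the natural one given the constructions recalled in Section~\ref{sectnorms}. The key step is the coefficient estimate, which in effect shows that the monomial basis $(s_I)$ is orthogonal for $\norm^{\otimes d}$; you correctly observe (and use) only the one-sided inequality $\norm^{\otimes d}(v)\geq|a_I|\prod_k\norm(s_{i_k})$, which is what the quotient-norm lower bound requires, and you correctly note that this uses only orthogonality of $\basis$ for $\norm$, not maximal completeness of $\field$. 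The bookkeeping in the final step is right: membership in $\pi^{-1}(s_1\wedge\dots\wedge s_d)$ forces $\sum_\sigma\mathrm{sgn}(\sigma)a_{(\sigma(1),\dots,\sigma(d))}=1$, and since $|\mathrm{sgn}(\sigma)|=1$ in any valued field the ultrametric inequality produces a permutation $\sigma_0$ with $|a_{(\sigma_0(1),\dots,\sigma_0(d))}|\geq1$, from which the lower bound follows. Combined with the obvious upper bound via the single lift $s_1\otimes\dots\otimes s_d$, this yields the identity, and the argument even shows the infimum is attained at that lift.
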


\begin{lemma}\label{detofgeodesicisgeodesic}
Let $t\mapsto \norm_t$ be a norm geodesic in $V$. Then, 
$$t\mapsto \det \norm_t$$
is the one-dimensional norm geodesic joining $\det\norm_0$ and $\det\norm_1$ in $\det V$.
\end{lemma}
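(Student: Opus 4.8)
The plan is to reduce the statement to a one-dimensional computation on $\det V$. Since $t \mapsto \norm_t$ is a norm geodesic, there is a basis $\basislong$ of $V$ codiagonalizing $\norm_0$ and $\norm_1$, and hence all the $\norm_t$, with $\norm_t(s_i) = \norm_0(s_i)^{1-t}\norm_1(s_i)^t$. The wedge $s_1 \wedge \dots \wedge s_d$ is a basis (of the one-dimensional space $\det V = V^{\wedge d}$) which diagonalizes every $\det\norm_t$ trivially, since $\det V$ is one-dimensional.

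Next I would apply Lemma \ref{determinantproduct} to each $\norm_t$: since $\basis$ diagonalizes $\norm_t$, we get
\[
\det\norm_t(s_1 \wedge \dots \wedge s_d) = \prod_{i=1}^d \norm_t(s_i) = \prod_{i=1}^d \norm_0(s_i)^{1-t}\norm_1(s_i)^t = \Bigl(\prod_i \norm_0(s_i)\Bigr)^{1-t}\Bigl(\prod_i \norm_1(s_i)\Bigr)^t.
\]
Applying Lemma \ref{determinantproduct} again to $\norm_0$ and $\norm_1$ (both diagonalized by $\basis$), the right-hand side equals $\det\norm_0(s_1 \wedge \dots \wedge s_d)^{1-t} \cdot \det\norm_1(s_1 \wedge \dots \wedge s_d)^t$. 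So the value of $\det\norm_t$ on the distinguished basis vector $s_1 \wedge \dots \wedge s_d$ of $\det V$ is exactly the log-convex interpolation of the endpoint values, which is precisely the defining property of the one-dimensional norm geodesic between $\det\norm_0$ and $\det\norm_1$ in the apartment associated to the basis $\{s_1\wedge\dots\wedge s_d\}$.

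I expect no real obstacle here; the only point requiring a word of care is the reduction to the diagonalizable case when $\field$ is not maximally complete. As elsewhere in the paper (cf. the proof of Proposition \ref{fdmaxprinciple}), this follows by approximating $\norm_0$ and $\norm_1$ by diagonalizable norms, using continuity of the determinant construction and of the formation of norm geodesics with respect to the endpoints (Proposition \ref{proplimgeodesicsnondiag}), and passing to the limit. A secondary routine check is that the determinant of a one-dimensional norm and its geodesic are literally the norm itself, so there is no ambiguity in calling $t\mapsto\det\norm_t$ "the one-dimensional norm geodesic."
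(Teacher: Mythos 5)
Your proof is correct and follows essentially the same route as the paper: reduce by density to the case where the $\norm_t$ are codiagonalized, apply Lemma \ref{determinantproduct} to each $\norm_t$, expand via the geodesic formula, and reassemble using Lemma \ref{determinantproduct} for the endpoints. The only difference is that you spell out the density/approximation step more explicitly, which the paper dispatches with a single phrase.
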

\begin{proof}By density, we may assume that there exists a basis $\basislong$of $V$ diagonalizing the $\norm_t$ for all $t$. We have:
\begin{align*}
\det \norm_t(s_1\wedge\dots\wedge s_d)&=\prod \norm_t(s_i)\\
&=\prod \norm_0(s_i)^{1-t}\norm_1(s_i)^t\\
&=(\det\norm_0(s_1\wedge\dots\wedge s_d))^{1-t}(\det\norm_1(s_1\wedge\dots\wedge s_d))^t,
\end{align*}
which by definition proves the statement. We have used Lemma \ref{determinantproduct} for the first equality, the diagonalizing property of $(s_i)$ for the second, and Lemma \ref{determinantproduct} again for the third.
\end{proof}

\noindent This yields a proof that the relative volume of geodesics is affine:
\begin{corollary}Given two norm geodesics $t\mapsto \norm_t$, $t\mapsto \norm'_t$ in $V$, the function
$$\vol(\norm_t,\norm'_t)=t\mapsto \log \frac{\det \norm'_t}{\det\norm_t}$$
is affine.
\end{corollary}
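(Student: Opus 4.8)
The plan is to reduce everything to the one-dimensional case via the determinant, using the two preceding lemmas as black boxes. First I would recall the identification $\vol(\norm,\norm')=\log(\det\norm'/\det\norm)$, where the right-hand side means the logarithm of the constant ratio $\det\norm'(\omega)/\det\norm(\omega)$ taken at any nonzero $\omega\in\det V=V^{\wedge d}$; this ratio is independent of $\omega$ precisely because $\det\norm$ and $\det\norm'$ are norms on a one-dimensional space. In the codiagonalizable case, choosing a codiagonalizing basis $\basislong$ and taking $\omega=s_1\wedge\dots\wedge s_d$, Lemma \ref{determinantproduct} gives $\log(\det\norm'(\omega)/\det\norm(\omega))=\sum_i\log(\norm'(s_i)/\norm(s_i))=\sum_i\lambda_i(\norm,\norm')=\vol(\norm,\norm')$; the general (non-diagonalizable) case follows by density of diagonalizable norms and continuity of both sides, exactly as in the proofs above (and as spelled out in \cite{boueri}).

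Next I would invoke Lemma \ref{detofgeodesicisgeodesic}: both $t\mapsto\det\norm_t$ and $t\mapsto\det\norm'_t$ are one-dimensional norm geodesics. Hence, fixing any $\omega\neq 0$ in $\det V$, we have $\det\norm_t(\omega)=\det\norm_0(\omega)^{1-t}\det\norm_1(\omega)^t$ and likewise $\det\norm'_t(\omega)=\det\norm'_0(\omega)^{1-t}\det\norm'_1(\omega)^t$ by the very definition of a norm geodesic. Taking logarithms, $t\mapsto\log\det\norm_t(\omega)$ and $t\mapsto\log\det\norm'_t(\omega)$ are each affine in $t$, so
$$\vol(\norm_t,\norm'_t)=\log\frac{\det\norm'_t(\omega)}{\det\norm_t(\omega)}=\log\det\norm'_t(\omega)-\log\det\norm_t(\omega)$$
is a difference of affine functions, hence affine, with value $(1-t)\vol(\norm_0,\norm'_0)+t\,\vol(\norm_1,\norm'_1)$.

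There is essentially no real obstacle: the substantive work is already contained in Lemmas \ref{determinantproduct} and \ref{detofgeodesicisgeodesic}, and what remains is bookkeeping. The only point deserving a word of care is the well-definedness of $\det\norm'/\det\norm$ as a scalar and the identification of $\log(\det\norm'/\det\norm)$ with the relative volume $\sum_i\lambda_i(\norm,\norm')$ in the absence of maximal completeness; this is dispatched by the same density-and-continuity argument used repeatedly in the preceding proofs, so I would keep that part brief and cite \cite{boueri} for the details.
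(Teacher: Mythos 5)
Your proof is correct, and in fact it is cleaner and more direct than the paper's own. The paper establishes the exact interpolation formula $\det\norm_t(\omega)=\det\norm_0(\omega)^{1-t}\det\norm_1(\omega)^t$ for the first geodesic (via a diagonalizing basis and Lemma \ref{determinantproduct}), but then, rather than simply applying the same reasoning to the second geodesic, it retreats to an inequality-based argument: it cites Lemma \ref{logconvexityofgeodesics} and Lemma \ref{detofgeodesicisgeodesic} to get log-convexity of $t\mapsto\det\norm_t$ and $t\mapsto\det\norm'_t$, deduces convexity of $\vol(\norm_t,\norm'_t)$, and then invokes antisymmetry to get concavity and hence affineness. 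You observe, correctly, that Lemma \ref{detofgeodesicisgeodesic} already identifies \emph{both} $t\mapsto\det\norm_t$ and $t\mapsto\det\norm'_t$ as one-dimensional norm geodesics, and that a one-dimensional norm geodesic is log-\emph{affine} (not merely log-convex) at any fixed nonzero $\omega\in\det V$; taking logarithms and subtracting then gives affineness immediately, together with the explicit expression $(1-t)\vol(\norm_0,\norm'_0)+t\,\vol(\norm_1,\norm'_1)$. The one subtlety you flag -- that the basis used to compute $\det\norm_t$ need not codiagonalize $\norm'_t$ -- is precisely why your formulation in terms of an arbitrary $\omega\in\det V$ (rather than a specific wedge $s_1\wedge\dots\wedge s_d$) is the right one, and it sidesteps the awkwardness that likely led the paper to its convexity-plus-concavity detour. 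Your treatment of the non-diagonalizable case by density and continuity is consistent with what the paper does throughout this section.
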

\begin{proof}
By density again, without loss of generality we can assume that we can pick a basis $\basislong$ diagonalizing $\norm_t$ for all $t$. We then have that for all $t$,
\begin{align*}
\det\norm_t(s_1\wedge\dots\wedge s_d)&=\prod_{i=1}^d \norm_t(s_i)\\
&=\prod_{i=1}^d \norm_0(s_i)^{1-t}\norm_1(s_i)^t\\
&=(\det\norm_0(s_1\wedge\dots\wedge s_d))^{1-t}(\det\norm_1(s_1\wedge\dots\wedge s_d))^t.
\end{align*}
Furthermore, from Lemma \ref{logconvexityofgeodesics} and Lemma \ref{detofgeodesicisgeodesic} we have that the functions $t\mapsto \det\norm_t$ and $t\mapsto \det\norm'_t$ are log-convex. Combining this with the equality above, we find
$$\log \frac{\det \norm'_t}{\det\norm_t}\leq \log \frac{(\det\norm'_0(s_1\wedge\dots\wedge s_d))^{1-t}(\det\norm'_1(s_1\wedge\dots\wedge s_d))^t}{(\det\norm_0(s_1\wedge\dots\wedge s_d))^{1-t}(\det\norm_1(s_1\wedge\dots\wedge s_d))^t},$$
i.e. the function $t\mapsto \vol(\norm_t,\norm'_t)$ is convex. Note that the argument applies symmetrically to show convexity of $t\mapsto \vol(\norm'_t,\norm_t)=-\vol(\norm_t,\norm'_t)$, i.e. that $t\mapsto \vol(\norm_t,\norm'_t)$ is also concave, proving affineness.
\end{proof}

\noindent We may now establish the following result, building on the proof of \cite[Proposition 5.1]{bdl}:
\begin{corollary}[Metric convexity of norm geodesics]\label{metricconvexityd1}
Given two norm geodesics $t\mapsto \norm_t$, $t\mapsto \norm'_t$ in $V$, we have
$$d_1(\norm_t,\norm'_t)\leq (1-t)d_1(\norm_0,\norm'_0)+td_1(\norm_1,\norm'_1).$$
\end{corollary}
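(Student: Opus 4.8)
The plan is to compare the two given geodesics with a single auxiliary geodesic passing through the pointwise maxima of the endpoints. Concretely, I would set $\eta_0:=\norm_0\vee\norm'_0$ and $\eta_1:=\norm_1\vee\norm'_1$, and let $t\mapsto\eta_t$ be the norm geodesic joining them. As in the rest of this section, I would first treat the diagonalizable case (so that $\eta_0,\eta_1$ are codiagonalizable and $\eta_t$ makes sense as a norm geodesic) and then pass to the general case by density of diagonalizable norms together with continuity of $d_1$ and of relative volumes in their arguments.

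The key structural input is monotonicity with respect to endpoints. Since $\norm_0\leq\eta_0$ and $\norm_1\leq\eta_1$, Proposition \ref{fdmaxprinciple} gives $\norm_t\leq\eta_t$ for every $t$, and likewise $\norm'_t\leq\eta_t$. Because the spectrum of a pair of comparable norms lies in the positive orthant, this turns the relevant distances into relative volumes: $d_1(\norm_t,\eta_t)=\vol(\norm_t,\eta_t)$ and $d_1(\eta_t,\norm'_t)=\vol(\norm'_t,\eta_t)$. Applying the triangle inequality for $d_1$, I would then bound
$$d_1(\norm_t,\norm'_t)\leq d_1(\norm_t,\eta_t)+d_1(\eta_t,\norm'_t)=\vol(\norm_t,\eta_t)+\vol(\norm'_t,\eta_t).$$

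Finally I would invoke the preceding Corollary: the relative volume of two norm geodesics is affine in $t$, so both $t\mapsto\vol(\norm_t,\eta_t)$ and $t\mapsto\vol(\norm'_t,\eta_t)$ are affine. Evaluating at $t=0$ and $t=1$ and using the identity $\vol(\norm_i,\eta_i)+\vol(\norm'_i,\eta_i)=d_1(\norm_i,\norm'_i)$ — which is exactly the formula $d_1(\norm,\norm')=\vol(\norm,\norm\vee\norm')+\vol(\norm',\norm\vee\norm')$ applied at each endpoint — yields the claimed inequality. (Equivalently, one can bypass the triangle inequality: write $d_1(\norm_t,\norm'_t)=\vol(\norm_t,\mu_t)+\vol(\norm'_t,\mu_t)$ with $\mu_t:=\norm_t\vee\norm'_t\leq\eta_t$, use the cocycle property to rewrite both terms through $\eta_t$, and discard the nonnegative correction $2\vol(\mu_t,\eta_t)$.) I expect the only genuinely delicate point to be the density and continuity reduction to diagonalizable norms, ensuring that $\eta_t$ is a bona fide norm geodesic and that Proposition \ref{fdmaxprinciple} and the affineness Corollary apply; everything else is a formal manipulation with volumes.
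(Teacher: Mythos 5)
Your proof is correct and takes essentially the same approach as the paper: both introduce the auxiliary geodesic between the endpoint maxima (your $\eta_t$, the paper's $\chi_t$), apply Proposition \ref{fdmaxprinciple} to get $\norm_t,\norm'_t\leq\eta_t$, and then use affineness of the relative volume along norm geodesics. The only superficial difference is that you route primarily through the $d_1$ triangle inequality while the paper decomposes directly via the cocycle property and drops the nonpositive term $\vol(\chi_t,\norm_t\vee\norm'_t)$ --- which is exactly your parenthetical alternative.
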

\begin{proof}
If the endpoints are comparable in the same order, i.e. $\norm_0\geq \norm'_0$ and $\norm_1\geq \norm'_1$; or $\norm_0\leq \norm'_0$ and $\norm_1\leq \norm'_1$, this follows immediately from the previous Corollary. In the general case, we have to be careful, as the maximum of norm geodesics is not a priori a norm geodesic. However, from Proposition \ref{fdmaxprinciple}, we have that the geodesic
$$t\mapsto \chi_t$$
joining $\norm_0\vee \norm'_0$ and $\norm_1\vee\norm'_1$ satisfies
$$\chi_t\geq \norm_t,\norm'_t$$
for all $t$, i.e.
$$\chi_t\geq \norm_t\vee\norm'_t.$$
Therefore, $\vol(\chi_t,\norm_t\vee\norm'_t)\leq 0$, and we have
\begin{align*}
d_1(\norm_t,\norm'_t)&=\vol(\norm_t,\norm_t\vee\norm'_t)+\vol(\norm'_t,\norm_t\vee\norm'_t)\\
&=\vol(\norm_t,\chi_t)+\vol(\chi_t,\norm_t\vee\norm'_t)+\vol(\norm'_t,\chi_t)+\vol(\chi_t,\norm_t\vee\norm'_t)\\
&\leq \vol(\norm_t,\chi_t)+\vol(\norm'_t,\chi_t).
\end{align*}
Since $\norm_t,\norm'_t\leq\chi_t$, the statement of the corollary holds for the volumes above, and we have
\begin{align*}
d_1(\norm_t,\norm'_t)&\leq (1-t)\vol(\norm_0,\norm_0\vee\norm'_0)+t\vol(\norm_1,\norm_1\vee\norm'_1)\\
&+(1-t)\vol(\norm'_0,\norm_0\vee\norm'_0)+t\vol(\norm'_1,\norm_1\vee\norm'_1)\\
&=(1-t)d_1(\norm_0,\norm'_0)+td_1(\norm_1,\norm'_1),
\end{align*}
proving the general statement.
\end{proof}

\begin{remark}
From the general theory of metric spaces, a result such as Corollary \ref{metricconvexityd1} ensures existence and good properties of the "cone at infinity" or boundary at infinity of $\cN(V)$, which can be described as equivalence classes of (norm) geodesic rays in $\cN(V)$ staying at bounded distance.
\end{remark}

\subsection{The case of a general field.}\label{subsectgeneralfield}

We now return to the case of an arbitrary non-Archimedean field $\field$. In this case, norms on a $\field$-vector space $V$ are not necessarily diagonalizable; however, the set of diagonalizable norms is dense in the set of all ultrametric norms for the $d_p$-distances, $p\in [1,\infty)$, and furthermore the set of ultrametric norms is complete for those distances, by \cite[Corollary 3.3]{boueri}. Using this result, we will show that norm geodesics still exist between norms in $\cN(V)$, rather than only in $\cN^{\mathrm{diag}}(V)$.

\bsni Consider then two non-necessarily diagonalizable norms $\norm_0$ and $\norm_1\in\normspace(V)$, and approximations $\norm^n_0$, $\norm^n_1$ such that
$$d_1(\norm^n_i,\norm_i)\leq\frac1n$$
for $i=0$, $1$. Define
$$\norm_t=\lim_n \norm_t^n,$$
where $t\mapsto \norm_t^n$ is the norm geodesic joining $\norm_0^n$ and $\norm_1^n$.

\begin{prop}\label{proplimgeodesicsnondiag}
The limit $\norm_t$ above exists, and satisfies the following properties:
\begin{enumerate}
\item it is independent of the approximation;
\item it is controlled uniformly by the bounds: given approximations $(\norm_i^m)_m$, $i=0,1$, of the bounds, we have
$$d_1(\norm_t,\norm_t^m)\leq (1-t)d_1(\norm_0,\norm_0^m)+td_1(\norm_1,\norm_1^m);$$
\item $\norm_t$ is the pointwise limit of the approximating norms: for $s\in V$,
$$\norm_t(s)=\lim_m \norm_t^m(s);$$
\item it is log-convex. 
\end{enumerate}
\end{prop}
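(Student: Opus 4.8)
The plan is to establish the four properties in a logical order, beginning with the existence of the limit and the uniform control, since properties (1)--(4) will all follow from a single Cauchy-type estimate. First I would observe that by Corollary \ref{metricconvexityd1} (metric convexity of norm geodesics), applied to the two diagonalizable norm geodesics $t\mapsto \norm_t^n$ and $t\mapsto \norm_t^m$ for $m,n$ large, we have
$$d_1(\norm_t^n,\norm_t^m)\leq (1-t)d_1(\norm_0^n,\norm_0^m)+td_1(\norm_1^n,\norm_1^m)\leq (1-t)\left(\tfrac1n+\tfrac1m\right)+t\left(\tfrac1n+\tfrac1m\right),$$
so $(\norm_t^n)_n$ is Cauchy for $d_1$; since the space of ultrametric norms is $d_1$-complete by \cite[Corollary 3.3]{boueri}, the limit $\norm_t$ exists in $\normspace(V)$. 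The same estimate, taking $m\to\infty$ with a second approximating sequence fixed, gives both independence of the approximation (property (1)) — two approximating sequences can be interleaved into a single one — and the uniform control of property (2), namely $d_1(\norm_t,\norm_t^m)\leq (1-t)d_1(\norm_0,\norm_0^m)+td_1(\norm_1,\norm_1^m)$, by letting $n\to\infty$ in the metric convexity inequality.

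For property (3), pointwise convergence, I would argue that $d_1$-convergence of norms on a finite-dimensional space forces pointwise convergence. The cleanest route is: fix $s\in V$, $s\neq 0$; then $|\log\norm_t^m(s)-\log\norm_t(s)|$ is bounded by a quantity controlled by $d_1(\norm_t^m,\norm_t)$. Concretely, the relative spectrum controls ratios of norms of any fixed vector — one has $\log\frac{\norm_t(s)}{\norm_t^m(s)}\leq \lambda_{\max}(\norm_t^m,\norm_t)$ and $\geq \lambda_{\min}(\norm_t^m,\norm_t)$ from the variational (\v{C}ech-style) characterization of the $\lambda_i$ recalled earlier in Section 2.2, and each $|\lambda_i|$ is bounded by $d\cdot d_1$ up to the obvious combinatorial factor (since $d_1 = \sum_i|\lambda_i|$ when the spectrum is split into its positive and negative parts, via $d_1(\norm,\norm')=\vol(\norm,\norm\vee\norm')+\vol(\norm',\norm\vee\norm')$). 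Hence $d_1(\norm_t^m,\norm_t)\to 0$ gives $\norm_t^m(s)\to\norm_t(s)$.

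Finally, property (4), log-convexity, follows immediately from property (3) by passing to the limit in the log-convexity of each approximating geodesic: by Lemma \ref{logconvexityofgeodesics}, $\norm_t^m(s)\leq \norm_0^m(s)^{1-t}\norm_1^m(s)^t$ for every $m$; letting $m\to\infty$ and using pointwise convergence at the three points $0$, $t$, $1$ yields $\norm_t(s)\leq \norm_0(s)^{1-t}\norm_1(s)^t$. I would also note in passing that $\norm_t$ is genuinely a norm (homogeneity and the ultrametric inequality are preserved under pointwise limits, and definiteness follows from the uniform $d_1$-bound preventing degeneration), so the construction does land in $\normspace(V)$ as asserted.

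The main obstacle I anticipate is property (3): one must be careful that $d_1$-convergence really does control the norm of each \emph{individual} vector, not merely some averaged quantity — the spectral measure is a probability measure, so a priori a small $d_1 = \int|\lambda|\,d\sigma$ bounds only an average of the $\lambda_i$, and I need the finite-dimensionality (so that the average of $d$ nonnegative-or-nonpositive terms bounds the maximum up to a factor $d$) together with the variational formula for $\lambda_{\max}$ and $\lambda_{\min}$ to get a genuine pointwise bound. Once that estimate is in hand, everything else is a routine limiting argument.
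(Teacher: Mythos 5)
Your proposal is correct and follows essentially the same structure as the paper's proof: metric convexity of $d_1$ along diagonalizable geodesics yields the Cauchy estimate that simultaneously gives existence, independence of the approximation, and the uniform bound (2), and then (4) follows by passing to the limit in Lemma \ref{logconvexityofgeodesics} using (3). The only difference is in (3): the paper simply invokes the equivalence of the distances $d_p$, $p\in[1,\infty]$, on finite-dimensional norm spaces (\cite[3.1]{boueri}) to convert $d_1$-convergence into $d_\infty$-convergence, hence pointwise convergence, whereas you rederive this comparison from scratch via the variational characterization of the $\lambda_i$ and the crude bound $d_\infty=\max_i|\lambda_i|\leq\sum_i|\lambda_i|=d\cdot d_1$. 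Both are valid; yours is more self-contained, the paper's is shorter.
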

\begin{proof}
Pick $m$, $n\geq 0$, and write using metric convexity on the diagonalizable geodesics (Corollary \ref{metricconvexityd1}):
\begin{align*}
d_1(\norm^m_t,\norm^n_t)\leq (1-t)d_1(\norm^m_0,\norm^n_0)+td_1(\norm^m_1,\norm^n_1)
\end{align*}
which establishes that the sequence $(\norm^n_t)$ is $d_1$-Cauchy, thus has a limit in $\normspace(V)$. This also establishes the statement about uniform approximation by passing to the limit in $n$ in the previous inequality. Pick now a second pair of approximations $(\norm_0'^{\,n})$, $(\norm_1'^{\,n})$, and write $\norm_t'^{\,n}$ for the norm geodesic between the adequate bounds for all $n$. Then,
\begin{align*}
d_1(\norm_t^n,\norm_t'^{\,n})&\leq (1-t)d_1(\norm_0^n,\norm_0'^{\,n})+td_1(\norm_1^n,\norm_1'^{\,n})
\end{align*}
which vanishes as $n\to\infty$ since both pairs of approximations have the same limits. For the third point, we recall that all metric structures $d_p$, $p\in[1,\infty]$ are equivalent on spaces of norms (see \cite[3.1]{boueri}). In particular, $d_1$-convergence is equivalent to $d_\infty$-convergence, where
$$d_\infty(\norm,\norm')=\log\sup_{s\in V}\left|\frac{\norm'(s)}{\norm(s)}\right|,$$
which gives pointwise convergence. The fourth point follows from the third by passing to the pointwise limit in the log-convexity inequalities
$$\norm_t^m(s)\leq\norm_0^m(s)^{1-t}\norm_1^m(s)^t.$$
\end{proof}

\subsection{Bounded graded norms.}\label{sect_asymptoticequivalence}

\bsni Let $X$ be a projective $\field$-variety, with $\field$ non-Archimedean, and let $L$ be an ample line bundle on $X$. Consider now the section ring $R(X,L)=\bigoplus H^0(kL)$ of $L$, which is a graded algebra of finite type over $\field$. An algebra norm on $R(X,L)$ compatible with the grading may be characterized as the data of norms $\norm_k$ acting on each $H^0(kL)$, satisfying the following submultiplicativity condition: given $s_k\in H^0(kL)$ and $s_\ell\in H^0(\ell L)$, we must have that
$$\norm_{k+\ell}(s_k\cdot s_\ell)\leq \norm_k(s_k)\cdot\norm_\ell(s_\ell).$$
A sequence of norms on $R(X,L)$ satisfying this condition is called a graded norm. In order to study asymptotic properties of graded norms, using e.g. Fekete's lemma, we need a growth condition that is both natural and ensures that the graded norms does not "blow up". We explain here how to formulate such a condition algebraically.

\bsni Since $L$ is ample, there exists some $k$ such that $R(X,kL)$ is generated in degree one, i.e. the morphisms
$$H^0(kL)^{\odot m}\to H^0(mkL),$$
defined by multiplication are surjective for all $m$. We say that a graded norm $\grnorm$ on $R(X,L)$ is generated in degree one if $R(X,L)$ is generated in degree one, and such that for all $m$, $\norm_{km}$ is the quotient norm induced by the surjective symmetry morphisms as above. Finally, we will say that a graded norm $\grnorm$ on $R(X,L)$ is a bounded graded norm if it has at most exponential distorsion with respect to a norm generated in degree one on the section algebra of some power of $L$, i.e. there exist $k$ such that $R(X,kL)$ is generated in degree one, a graded norm $\grnorm'$ generated in degree one on $R(X,kL)$, and a constant $C>0$ with
$$e^{-kmC}\norm'_m \leq \norm_{km} \leq e^{kmC}\norm'_m$$
for all $m$. We denote by $\mathcal{N}_\bullet(R)$ the set of such graded norms. As we will see next, there are natural metric structures on this space, which will ultimately be intimately related to metric structures on spaces of plurisubharmonic metrics on $L$.

\bsni Given bounded graded norms $\grnorm$, $\grnorm'\in\grnormspace(R)$, the sequence of rescaled measures
$$m\mi_*\sigma(\norm_m,\norm'_m)$$
weakly converges to a compactly supported probability measure (as a consequence of \cite{cmac}, see \cite[4.2]{reb}), which we call the relative spectral measure of $\grnorm$ and $\grnorm'$:
$$\sigma(\grnorm,\grnorm')=\lim_m \,(mh^0(mL))\mi_*\sigma(\norm_m,\norm'_m).$$
Its absolute moments define semidistances on $\grnormspace(R)$, 
$$d_p(\grnorm,\grnorm')^p=\int_\bbr |\lambda|^p \,d\sigma(\grnorm,\grnorm').$$
In fact, the equivalence relation defined by identifying two bounded graded norms at zero $d_p$-distance is independent of $p$; we therefore denote by $\grnormspace(R)/\sim$ the honest metric space obtained in this way.

\bsni It is then easy to see that the asymptotic $d_p$-distances may be recovered as the limit of the finite-dimensional distances
$$m\mi d_p(\norm_m,\norm'_m)^p=m\mi\int_\bbr |\lambda|^p\,d\sigma(\norm_m,\norm'_m).$$
Similarly, the asymptotic volume of two bounded graded norm is defined as
$$\vol(\grnorm,\grnorm')=\lim_m (mh^0(mL))\mi \vol(\norm_m,\norm_m').$$
It retains the properties of finite-dimensional volumes, i.e. the cocycle property and antisymmetry.

\subsection{Geodesics between bounded graded norms.}

We prove now our main result for this Section. Given two bounded graded norms $\grnorm^0$ and $\grnorm^1$, one wonders whether there exists a geodesic of graded norms in $\grnormspace(R)$ for the asymptotic $d_1$-distance (or $d_p$, $p<\infty$). It seems obvious to consider, for all $k$, the norm geodesic $\norm_k^t$ joining $\norm_k^0$ and $\norm_k^1$ in $H^0(kL)$, and to set, for all $t$, 
$$\grnorm^t=(\norm^t_k)_k.$$
There are two points to show here: submultiplicativity, and geodesicity. The latter is rather simple: by definition of our norm geodesics, we have that, for $t$, $t'\in [0,1]$, 
$$d_p(\norm_k^t,\norm_k^{t'})=|t-t'|\cdot d_p(\norm_k^0,\norm_k^1),$$
so that geodesicity follows upon taking the limit in $k$. Showing submultiplicativity is a bit trickier.

\begin{theorem}\label{submultgeodesics}
For all $t\in[0,1]$, the sequence of norms $\grnorm^t$ defined above is submultiplicative.
\end{theorem}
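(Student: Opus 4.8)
The plan is to verify the submultiplicativity inequality
$$\norm^t_{k+\ell}(s_k\cdot s_\ell)\leq \norm^t_k(s_k)\cdot\norm^t_\ell(s_\ell)$$
for sections $s_k\in H^0(kL)$, $s_\ell\in H^0(\ell L)$, directly from the log-convexity of norm geodesics (Lemma \ref{logconvexityofgeodesics}, extended to the general field by Proposition \ref{proplimgeodesicsnondiag}(iv)) together with the assumed submultiplicativity of the two endpoint graded norms $\grnorm^0$ and $\grnorm^1$. First I would reduce, by Proposition \ref{proplimgeodesicsnondiag}, to the diagonalizable case: choose approximating diagonalizable graded norms for $\grnorm^0$ and $\grnorm^1$ — here one must be slightly careful to approximate by \emph{graded} (submultiplicative) norms, but this is standard since the diagonalizable norms are dense and the submultiplicativity constraint is closed under $d_1$-limits, so the limiting geodesic inherits any inequality that holds for the approximants. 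Actually, it is cleaner to prove the inequality pointwise for fixed $s_k, s_\ell$ in the approximants and pass to the pointwise limit using Proposition \ref{proplimgeodesicsnondiag}(iii), so that one never needs the approximants themselves to be submultiplicative; one only needs the two endpoint norms at the relevant degrees to be approximated in a codiagonalizing fashion.

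The core computation, in the diagonalizable case, goes as follows. By log-convexity of the geodesic on $H^0((k+\ell)L)$,
$$\norm^t_{k+\ell}(s_k\cdot s_\ell)\leq \norm^0_{k+\ell}(s_k\cdot s_\ell)^{1-t}\cdot\norm^1_{k+\ell}(s_k\cdot s_\ell)^{t}.$$
Now apply submultiplicativity of $\grnorm^0$ and of $\grnorm^1$ to each factor:
$$\norm^0_{k+\ell}(s_k\cdot s_\ell)^{1-t}\cdot\norm^1_{k+\ell}(s_k\cdot s_\ell)^{t}\leq \left(\norm^0_k(s_k)\norm^0_\ell(s_\ell)\right)^{1-t}\cdot\left(\norm^1_k(s_k)\norm^1_\ell(s_\ell)\right)^{t}.$$
Regrouping the factors by degree,
$$=\left(\norm^0_k(s_k)^{1-t}\norm^1_k(s_k)^{t}\right)\cdot\left(\norm^0_\ell(s_\ell)^{1-t}\norm^1_\ell(s_\ell)^{t}\right)=\norm^t_k(s_k)\cdot\norm^t_\ell(s_\ell),$$
where the last equality is precisely the defining formula for the norm geodesics $\norm^t_k$ and $\norm^t_\ell$ (applied to the elements $s_k$, $s_\ell$ of a codiagonalizing basis — or, more honestly, to general elements, which is exactly the content of Lemma \ref{logconvexityofgeodesics} upgraded to an equality: here we do need that the endpoints $\norm^0$, $\norm^1$ in each degree are codiagonalized, which is why the diagonalizable reduction is used). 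Taking the limit over the approximations (Proposition \ref{proplimgeodesicsnondiag}(iii)) gives the inequality for the limiting geodesic, which is exactly submultiplicativity of $\grnorm^t$.

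The main obstacle, and the only real subtlety, is the interplay between the approximation argument and the two places where a \emph{codiagonalizing} basis is needed at once: in Lemma \ref{logconvexityofgeodesics} one uses a basis codiagonalizing $\norm^0_{k+\ell}$ and $\norm^1_{k+\ell}$, while to identify $\norm^t_k(s_k)\norm^t_\ell(s_\ell)$ with a product of log-interpolations one wants the computation in degrees $k$ and $\ell$ simultaneously. In the diagonalizable case this causes no problem because each geodesic is handled in its own degree, and the chain of inequalities above only ever compares quantities in a single degree at a time — the three displays each live purely in degree $k+\ell$, degree $k$, or degree $\ell$. So I would organize the argument so that the approximation is performed degree-by-degree (approximate $\norm^0_m$, $\norm^1_m$ by a $d_1$-convergent sequence of codiagonalizable pairs for each of $m=k,\ell,k+\ell$), carry out the three-line computation for the approximants, and then pass to the pointwise limit in all three degrees at once. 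The submultiplicativity of the endpoints is used only for the genuine (exact) graded norms $\grnorm^0$, $\grnorm^1$, so no compatibility of the approximants with the multiplication map is required, which sidesteps the awkward question of whether one can approximate a graded norm by graded norms.
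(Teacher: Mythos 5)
The core three-line chain in your diagonalizable case matches the paper's first step, but there is a genuine gap at its last equality. You write
$$\left(\norm^0_k(s_k)^{1-t}\norm^1_k(s_k)^{t}\right)\cdot\left(\norm^0_\ell(s_\ell)^{1-t}\norm^1_\ell(s_\ell)^{t}\right)=\norm^t_k(s_k)\cdot\norm^t_\ell(s_\ell),$$
and then assert that for general $s_k$, $s_\ell$ this is ``Lemma \ref{logconvexityofgeodesics} upgraded to an equality'' once the endpoints are codiagonalized. That upgrade is false: Lemma \ref{logconvexityofgeodesics} only gives $\norm^t_k(s_k)\leq\norm^0_k(s_k)^{1-t}\norm^1_k(s_k)^{t}$, and the inequality can be strict even with codiagonalized endpoints. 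Concretely, in $\field^2$ with the $\norm^i$ codiagonalized by $(e_1,e_2)$ and $\norm^0(e_1)=\norm^1(e_2)=1$, $\norm^0(e_2)=\norm^1(e_1)=1/4$, take $s=e_1+e_2$: then $\norm^0(s)=\norm^1(s)=1$ so the right side is $1$, but $\norm^{1/2}(s)=\max(4^{-1/2},4^{-1/2})=1/2$. Your chain therefore only establishes
$$\norm^t_{k+\ell}(s_k\cdot s_\ell)\leq \norm^0_k(s_k)^{1-t}\norm^1_k(s_k)^{t}\cdot\norm^0_\ell(s_\ell)^{1-t}\norm^1_\ell(s_\ell)^{t},$$
and since $\norm^t_k(s_k)\norm^t_\ell(s_\ell)$ is bounded above by the same right-hand side, nothing follows about the comparison you actually need.

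What is missing is the paper's second step: first run your three-line argument only for $s_k$, $s_\ell$ that are members of bases codiagonalizing the endpoints in degrees $k$ and $\ell$ respectively (there the last line really is an equality), and then extend to arbitrary $s_k=\sum a_i s_{k,i}$, $s_\ell=\sum b_j s_{\ell,j}$ by the ultrametric inequality:
$$\norm^t_{k+\ell}(s_k s_\ell)\leq\max_{i,j}|a_i||b_j|\,\norm^t_{k+\ell}(s_{k,i}s_{\ell,j})\leq\max_{i,j}|a_i||b_j|\,\norm^t_k(s_{k,i})\norm^t_\ell(s_{\ell,j})\leq\norm^t_k(s_k)\norm^t_\ell(s_\ell),$$
using the basis case in the middle and the codiagonalizing property of the bases for $\norm^t_k$, $\norm^t_\ell$ at the end. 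This step cannot be omitted; your argument as stated closes only for basis elements. Relatedly, your remark that ``no compatibility of the approximants with the multiplication map is required'' does not hold up: your intended chain does invoke submultiplicativity of the norms you plug in, so either the approximants must (approximately) satisfy it, or you must rescale as the paper does (multiplying the $d_\infty$-close diagonalizable approximants by $e^{3\varepsilon}$ to restore submultiplicativity) and use the $d_\infty$-convexity Lemma \ref{dinftyconvexity} to control the approximating geodesics before passing to the pointwise limit.
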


\noindent If the norms are diagonalizable, the proof is rather simple; to pass to the general case, one needs to be careful that our usual pointwise approximation argument is not immediately valid when one considers graded norms. We therefore need some preliminary tools, useful only to study the non-diagonalizable case.

\bsni We first recall the definition of the $d_\infty$-distance (already encountered in the proof of \ref{proplimgeodesicsnondiag}): given two norms $\norm$, $\norm'$ acting on a $d$-dimensional $\field$-vector space $V$,
$$d_\infty(\norm,\norm')=\log \sup_{v\in V-\{0\}} \left|\frac{\norm'(v)}{\norm(v)}\right|.$$
By definition, this distance is characterized as the smallest constant $C\geq 0$ such that
\begin{equation}\label{dinftydistorsion}e^{-C}\norm(v)\leq\norm'(v)\leq e^C\norm(v)\end{equation}
for all $v\in V$, i.e. it is the best constant characterizing the (exponential) distortion between the two norms. As seen in the proof of \ref{proplimgeodesicsnondiag} as well, convergence in $d_\infty$-distance implies pointwise convergence (as a real-valued function on $V$). Finally, one can easily see (at first in the diagonalizable case, then by density) that
$$d_\infty(\norm,\norm')=\max_{i\in\{1,\dots,d\}}|\lambda_i(\norm,\norm')|.$$
It is then possible to prove that an analogue of Corollary \ref{metricconvexityd1} holds:
\begin{lemma}[Convexity of $d_\infty$ along norm geodesics]\label{dinftyconvexity}
Let $\norm_0$, $\norm_1$ and $\norm'_0$, $\norm'_1$ be four norms on $V$, and denote by $\norm_t$, resp. $\norm'_t$ the norm geodesic joining the first two, resp. the last two. Then,
$$d_\infty(\norm_t,\norm'_t)\leq (1-t)d_\infty(\norm_0,\norm'_0)+td_\infty(\norm_1,\norm'_1).$$
\end{lemma}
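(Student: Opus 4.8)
The plan is to reduce the statement to the corresponding fact for the $d_1$-distance, using the identity $d_\infty(\norm,\norm')=\max_i|\lambda_i(\norm,\norm')|$ recalled just above. First I would handle the diagonalizable case directly, since that is where the real content lies; the general case then follows by the density/limit argument already used for Corollary \ref{metricconvexityd1} and Proposition \ref{proplimgeodesicsnondiag}, together with the fact that $d_\infty$-convergence is equivalent to $d_1$-convergence on spaces of norms (all $d_p$ being equivalent, by \cite[3.1]{boueri}).

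So assume all four norms are diagonalizable. The key observation is that $d_\infty$ is, by \eqref{dinftydistorsion}, the best constant $C$ with $e^{-C}\norm_t\leq\norm'_t\leq e^C\norm_t$ pointwise on $V$. Set $C_0=d_\infty(\norm_0,\norm'_0)$ and $C_1=d_\infty(\norm_1,\norm'_1)$, so that $e^{-C_i}\norm_i(v)\leq\norm'_i(v)\leq e^{C_i}\norm_i(v)$ for all $v$ and $i=0,1$. The goal is the pointwise bound $e^{-((1-t)C_0+tC_1)}\norm_t(v)\leq\norm'_t(v)\leq e^{(1-t)C_0+tC_1}\norm_t(v)$. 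Using log-convexity of norm geodesics (Lemma \ref{logconvexityofgeodesics}) one has $\norm'_t(v)\leq\norm'_0(v)^{1-t}\norm'_1(v)^t\leq (e^{C_0}\norm_0(v))^{1-t}(e^{C_1}\norm_1(v))^t=e^{(1-t)C_0+tC_1}\,\norm_0(v)^{1-t}\norm_1(v)^t$. This is not quite what we want, because it gives $\norm_0(v)^{1-t}\norm_1(v)^t$ rather than $\norm_t(v)$, and in general $\norm_t(v)\leq\norm_0(v)^{1-t}\norm_1(v)^t$ only. The clean way around this is to work on the codiagonalizing basis $(s_i)$ of $\norm_0,\norm_1$ — but note that $(s_i)$ need not codiagonalize $\norm'_0,\norm'_1$. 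Instead I would argue at the level of spectra: pick a basis $(s_i)$ codiagonalizing $\norm_0$ and $\norm_1$ and recall $d_\infty(\norm_t,\norm'_t)=\max_i|\lambda_i(\norm_t,\norm'_t)|$; then use the variational characterization of the $\lambda_i$ together with the pointwise bounds above applied on each subspace. Concretely, for each $i$ and each $v$, $\log(\norm'_t(v)/\norm_t(v))\leq (1-t)\log(\norm'_0(v)/\norm_0(v))+t\log(\norm'_1(v)/\norm_1(v))+\big[(1-t)\log\tfrac{\norm_0(v)}{\norm_t(v)}+t\log\tfrac{\norm_1(v)}{\norm_t(v)}\big]$, but one checks using $\norm_t(s_i)=\norm_0(s_i)^{1-t}\norm_1(s_i)^t$ that on the codiagonalizing basis the bracketed defect vanishes, and then the sup/inf defining $\lambda_i(\cdot,\cdot)$ over flags is dominated termwise, yielding $|\lambda_i(\norm_t,\norm'_t)|\leq (1-t)d_\infty(\norm_0,\norm'_0)+t\,d_\infty(\norm_1,\norm'_1)$; taking the max over $i$ gives the claim.

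Alternatively, and more cheaply, I would just observe that $d_\infty$ is a sup-type (rather than average-type) quantity, and run the exact same maximum-principle argument as in Corollary \ref{metricconvexityd1}: reduce to comparable endpoints via $\chi_t\geq\norm_t\vee\norm'_t$ (Proposition \ref{fdmaxprinciple}), note $d_\infty(\norm_t,\norm'_t)=\max\!\big(\log\sup_v\tfrac{\norm'_t(v)}{\norm_t(v)},\log\sup_v\tfrac{\norm_t(v)}{\norm'_t(v)}\big)$, and bound each one-sided distortion $\log\sup_v\tfrac{\chi_t(v)}{\norm_t(v)}$, $\log\sup_v\tfrac{\chi_t(v)}{\norm'_t(v)}$ by its affine (indeed log-convex, by Lemma \ref{logconvexityofgeodesics}) counterpart at the endpoints, exactly as volumes were handled there. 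Either route is short.

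The main obstacle I anticipate is the mismatch between $\norm_t(v)$ and the log-convex majorant $\norm_0(v)^{1-t}\norm_1(v)^t$ for vectors $v$ not in a codiagonalizing basis: a naive application of log-convexity on both sides of the distortion estimate loses control. The fix is to reduce everything to the relative spectrum via $d_\infty=\max_i|\lambda_i|$ and to exploit that on the codiagonalizing basis of the relevant pair the geodesic is genuinely the log-linear interpolation, so the defect term drops out; this is precisely the point to write out carefully. The non-diagonalizable case presents no new difficulty: take approximating diagonalizable norms $\norm_i^n\to\norm_i$, $\norm_i'^{\,n}\to\norm_i'$ in $d_1$ (hence in $d_\infty$), apply the diagonalizable case to the approximants, and pass to the limit using Proposition \ref{proplimgeodesicsnondiag}(iii) and continuity of $d_\infty$ under $d_\infty$-convergence.
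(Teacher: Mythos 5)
Your proposal correctly flags the central obstacle — for a general $v$ one only has $\norm_t(v)\leq\norm_0(v)^{1-t}\norm_1(v)^t$, which is the wrong direction for a distortion estimate — but neither route you sketch actually clears it. In the first route, the observation that the defect term vanishes on a basis codiagonalizing $(\norm_0,\norm_1)$ is true but unhelpful: the min-max defining $\lambda_i(\norm_t,\norm'_t)$ ranges over arbitrary subspaces, and the basis codiagonalizing $(\norm_0,\norm_1)$ in general has nothing to do with the one diagonalizing the pair $(\norm_t,\norm'_t)$. In fact the stronger eigenvalue-by-eigenvalue interpolation $\lambda_i(\norm_t,\norm'_t)=(1-t)\lambda_i(\norm_0,\norm'_0)+t\,\lambda_i(\norm_1,\norm'_1)$ — which is, incidentally, also what the paper's own proof asserts — is simply false: on $V=\field^2$ take $\norm_0,\norm_1$ diagonal in the standard basis $(e_1,e_2)$ with $\norm_0(e_1)=e^{-8}$, $\norm_0(e_2)=1$, $\norm_1(e_1)=1$, $\norm_1(e_2)=e^{-8}$, and $\norm'_0=\norm'_1$ the norm with $\norm'(e_i)=1$. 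Both endpoint spectra are $(0,8)$, yet $\norm_{1/2}(e_i)=e^{-4}$ gives midpoint spectrum $(4,4)$. The Lemma still holds, but not by any pointwise interpolation of eigenvalues. In the second route, bounding $\log\sup_v \chi_t(v)/\norm_t(v)$ via Lemma \ref{logconvexityofgeodesics} runs into exactly the same mismatch: log-convexity controls the numerator, not the denominator, which is the issue you yourself identified.

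The tool that actually closes the argument is Proposition \ref{fdmaxprinciple}, combined with the immediate observation that the geodesic joining $e^{C_0}\norm_0$ and $e^{C_1}\norm_1$ is $e^{(1-t)C_0+tC_1}\norm_t$ (both are diagonalized by any basis codiagonalizing $\norm_0,\norm_1$, with the constants carried along). Set $C_i=\sup_{v}\log\bigl(\norm'_i(v)/\norm_i(v)\bigr)$, so that $\norm'_i\leq e^{C_i}\norm_i$ for $i=0,1$; Proposition \ref{fdmaxprinciple} then yields $\norm'_t\leq e^{(1-t)C_0+tC_1}\norm_t$, i.e. the one-sided distortion $\sup_v\log\bigl(\norm'_t(v)/\norm_t(v)\bigr)$ is at most $(1-t)C_0+tC_1$. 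By symmetry the same holds with $\norm$ and $\norm'$ exchanged, and since $d_\infty$ is the maximum of the two one-sided distortions, the elementary bound $\max(a+b,c+d)\leq\max(a,c)+\max(b,d)$ finishes. No detour through $\chi_t=\norm_t\vee\norm'_t$ is needed. Your density argument for non-diagonalizable norms is fine as stated.
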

\begin{proof}
As usual, the general case follows from the diagonalizable case by approximation, therefore we make this assumption.
We have that
\begin{align*}
d_\infty(\norm_t,\norm_t')&=\max_i |\lambda_i(\norm_t,\norm'_t)|\\
&=\max_i \,|(1-t)\lambda_i(\norm_0,\norm'_0)+t\lambda_i(\norm_1,\norm'_1)|\\
&\leq (1-t)\max_i|\lambda_i(\norm_0,\norm'_0)|+t\max_i |\lambda_i(\norm_1,\norm'_1)|,
\end{align*}
and the last term is simply $(1-t)d_\infty(\norm_0,\norm'_0)+td_\infty(\norm_1,\norm'_1)$, which is the desired result.
\end{proof}

\bsni We may now prove the Theorem.
\begin{proof}[Proof of Theorem \ref{submultgeodesics}]
We assume at first that all norms involved are diagonalizable. We start with the following case: let $s_m$ be a section of $H^0(mL)$ which belongs to a basis orthogonal for $\norm^0_m$ and $\norm^1_m$. Define in in the same way $s_n\in H^0(nL)$. We then have that
\begin{align*}
\norm_{m+n}^t(s_m\cdot s_n)&\leq\norm^0_{m+n}(s_m\cdot s_n)^{1-t}\norm^1_{m+n}(s_m\cdot s_n)^{t}\\
&\leq \norm^0_m(s_m)^{1-t}\norm^0_n(s_n)^{1-t}\norm^1_m(s_m)^{t}\norm^1_n(s_n)^{t}\\
&=\norm_{m}^t(s_m)\cdot\norm_{n}^t(s_n),
\end{align*}
where we have used log-convexity (Lemma \ref{logconvexityofgeodesics}) in the first inequality, submultiplicativity of the endpoints in the second inequality, and finally the fact that $s_m$ and $s_n$ belong to bases codiagonalizing the endpoints, so that $\norm_m^t(s_m)=\norm_m^0(s_m)^{1-t}\norm^1_m(s_m)^t$, and the same holds for $n$.

\bsni To pass to the general case, write
$$s_m=\sum a_i s_{m,i},\,s_n=\sum b_j s_{n,j},$$
in their adapted bases, and note that
\begin{align*}
\norm_{m+n}^t(s_m\cdot s_n)&\leq \max_{i,j} |a_i|\cdot |b_j|\cdot \norm_{m+n}^t(s_{m,i}\cdot s_{n,j})\\
&\leq \max_{i,j} |a_i|\cdot |b_j|\cdot \norm_{m}^t(s_{m,i})\cdot\norm_{n}^t(s_{n,j})\\
&\leq (\max_i |a_i|\cdot\norm_m^t(s_{m,i})) \cdot (\max_j |b_j|\cdot\norm_n^t(s_{n,j}))\\
&\leq \norm_m^t(s_m)\cdot \norm_n^t(s_n).
\end{align*}
The second inequality follows from the result we just proved, which applies to the $s_{m,i}$ and the $s_{n,j}$; the fourth inequality follows from the fact that those bases diagonalize $\norm_m^t$ and $\norm_n^t$. This proves the desired result.

\bsni Finally, if the norms are not diagonalizable, pick, for $i=0,1$, $\varepsilon>0$, and all $m\in\mathbb{N}^*$, diagonalizable norms $\norm^{i,\varepsilon}_m$ such that
$$d_\infty(\norm^{i,\varepsilon}_m,\norm^i_m)<\varepsilon.$$
Those always exist by $d_\infty$-density of the set of diagonalizable norms on a finite-dimensional $\field$-vector space. By (\ref{dinftydistorsion}), we then have that, for all $m$:
\begin{equation}\label{submult1}e^{-\varepsilon}\norm^i_m\leq \norm^{i,\varepsilon}_m \leq e^{\varepsilon}\norm^i_m.
\end{equation}
Pick sections $s_m\in H^0(mL)$, $s_\ell\in H^0(\ell L)$. We then have that
\begin{align*}
\norm^{i,\varepsilon}_{m+\ell}(s_m\cdot s_\ell)&\leq e^{\varepsilon}\norm^i_{m+\ell}(s_m\cdot s_\ell)\\
&\leq e^{\varepsilon}\norm^i_m(s_m)\cdot \norm^i_\ell(s_\ell)\\
&\leq e^{3\varepsilon}\norm^{i,\varepsilon}_m(s_m)\cdot \norm^{i,\varepsilon}_\ell(s_\ell).
\end{align*}
We have used the right-hand side of (\ref{submult1}) for the first inequality; submultiplicativity of $\grnorm^i$ for the second inequality, and finally the left-hand side of (\ref{submult1}) for the third one. Multiplying both sides by $e^{3\varepsilon}$ we then have that
$$e^{3\varepsilon}\norm^{i,\varepsilon}_{m+\ell}(s_m\cdot s_\ell)\leq e^{6\varepsilon}\norm^{i,\varepsilon}_m(s_m)\cdot \norm^{i,\varepsilon}_\ell(s_\ell),$$
i.e. the sequence of norms $e^{3\varepsilon}\norm_\bullet^{i,\varepsilon}$ is submultiplicative. As
$$d_\infty(e^{3\varepsilon}\norm^{i,\varepsilon}_m,\norm^i_m)=3\varepsilon+d_\infty(\norm^{i,\varepsilon}_m,\norm^i_m)<4\varepsilon,$$
one can see the $e^{3\varepsilon}\norm^{t,\varepsilon}_m$ to be the norm geodesics joining the $e^{3\varepsilon}\norm^{i,\varepsilon}_m$, and also actually $e^{3\varepsilon}$ times the geodesic joining the $\norm^{i,\varepsilon}_m$). We find
$$d_\infty(e^{3\varepsilon}\norm^{t,\varepsilon}_m,\norm^t_m)\leq (1-t)d_\infty(e^{3\varepsilon}\norm^{0,\varepsilon}_m,\norm^0_m)+td_\infty(e^{3\varepsilon}\norm^{1,\varepsilon}_m,\norm^1_m)<4\varepsilon,$$
thanks to metric convexity of $d_\infty$ (Lemma \ref{dinftyconvexity}). This states that $e^{3\varepsilon}\norm^{t,\varepsilon}_m$
converges pointwise to $\norm^t_m$ for all $m$. As $e^{3\varepsilon}\norm^{i,\varepsilon}_m$ is diagonalizable for all $m$, by the previous case, $e^{3\varepsilon}\norm^{t,\varepsilon}_\bullet$ is submultiplicative, and in particular we have
$$e^{3\varepsilon}\norm^{t,\varepsilon}_{m+\ell}(s_m\cdot s_\ell)\leq e^{6\varepsilon}\norm^{t,\varepsilon}_{m}(s_m)\norm^{t,\varepsilon}_{\ell}(s_\ell)$$
Using the pointwise convergence found above to pass to the limit as $\varepsilon\to 0$, this proves the Theorem.
\end{proof}

\noindent We remark that the last part of the above proof also shows the following:
\begin{prop}
\label{approxsubmult}Let $\grnorm$ be a bounded graded norm on $L$. Then, there exist bounded graded norms $\grnorm^\varepsilon$ on $L$, for all $\varepsilon>0$, satisfying the following properties:
\begin{itemize}
\item $\norm^\varepsilon_m$ is diagonalizable for all $m$;
\item $d_\infty(\norm^\varepsilon_m,\norm_m)<\varepsilon$ for all $m$.
\end{itemize}
\end{prop}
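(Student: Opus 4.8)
The plan is to read this off the last paragraph of the proof of Theorem~\ref{submultgeodesics}, where precisely such an approximating family was built for each endpoint norm; the only new point is to phrase it as a standalone statement for a single bounded graded norm and to check that boundedness is preserved. Fix $\varepsilon>0$. For every $m\in\bbn^*$, use $d_\infty$-density of the set of diagonalizable norms in the finite-dimensional space $\cN(H^0(mL))$ to pick a diagonalizable norm $\tilde\norm_m$ with $d_\infty(\tilde\norm_m,\norm_m)<\varepsilon/4$; by (\ref{dinftydistorsion}) this yields the uniform distortion bounds $e^{-\varepsilon/4}\norm_m\leq\tilde\norm_m\leq e^{\varepsilon/4}\norm_m$ for all $m$.

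The sequence $(\tilde\norm_m)_m$ need not be submultiplicative, but a fixed degree-independent rescaling repairs this: set $\norm^\varepsilon_m:=e^{3\varepsilon/4}\tilde\norm_m$. For $s_m\in H^0(mL)$, $s_\ell\in H^0(\ell L)$ one has
$$\tilde\norm_{m+\ell}(s_m\cdot s_\ell)\leq e^{\varepsilon/4}\norm_{m+\ell}(s_m\cdot s_\ell)\leq e^{\varepsilon/4}\norm_m(s_m)\norm_\ell(s_\ell)\leq e^{3\varepsilon/4}\tilde\norm_m(s_m)\tilde\norm_\ell(s_\ell),$$
using the upper distortion bound, submultiplicativity of $\grnorm$, and the lower distortion bound; multiplying through by $e^{3\varepsilon/4}$ gives $\norm^\varepsilon_{m+\ell}(s_m\cdot s_\ell)\leq\norm^\varepsilon_m(s_m)\norm^\varepsilon_\ell(s_\ell)$, so $\grnorm^\varepsilon$ is a graded norm. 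Moreover $\norm^\varepsilon_m$, being a positive scalar multiple of a diagonalizable norm, is itself diagonalizable, and $d_\infty(\norm^\varepsilon_m,\norm_m)=3\varepsilon/4+d_\infty(\tilde\norm_m,\norm_m)<\varepsilon$, which gives the two listed properties.

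Finally, to see that $\grnorm^\varepsilon$ lies in $\grnormspace(R)$, i.e. is \emph{bounded}, pick by definition of boundedness of $\grnorm$ an integer $k$, a graded norm $\grnorm'$ generated in degree one on $R(X,kL)$, and $C>0$ with $e^{-kmC}\norm'_m\leq\norm_{km}\leq e^{kmC}\norm'_m$ for all $m$; combining with $e^{-\varepsilon}\norm_{km}\leq\norm^\varepsilon_{km}\leq e^{\varepsilon}\norm_{km}$ and $km\geq 1$ gives $e^{-km(C+\varepsilon)}\norm'_m\leq\norm^\varepsilon_{km}\leq e^{km(C+\varepsilon)}\norm'_m$, so $\grnorm^\varepsilon$ has exponential distortion with respect to the same norm generated in degree one, hence is bounded. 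There is no genuine obstacle in this argument; the one subtlety, and the reason the rescaling factor $e^{3\varepsilon/4}$ must appear, is that a naive term-by-term $d_\infty$-approximation destroys submultiplicativity, so it has to be corrected by a fixed multiplicative constant chosen large enough to restore submultiplicativity yet small enough to keep the uniform distance to $\grnorm$ below $\varepsilon$.
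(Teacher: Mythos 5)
Your proof is correct and takes the same route as the paper, which disposes of this proposition by pointing to the last paragraph of the proof of Theorem~\ref{submultgeodesics}. The two small things you add are exactly the two things the paper leaves implicit: you tighten the bookkeeping (choosing $\varepsilon/4$-approximants and rescaling by $e^{3\varepsilon/4}$, so that the final $d_\infty$-distance is genuinely below $\varepsilon$ rather than $4\varepsilon$ as the constants in the theorem's proof would literally give), and you verify that the resulting graded norm is still bounded, i.e.\ remains in $\grnormspace(R)$.
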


\subsection{Geodesics and the symmetric product.}\label{symproduct}

We conclude this Section with a somewhat isolated result. Previously, we showed that the determinant of a geodesic between two norms is the geodesic between the determinant of said two norms. A more surprising result is that the same also holds for symmetric products, as we will show now. Since symmetric algebras arise as section algebras of line bundles over projective space, this will let us consider easily computable examples in a later section. We assume here for simplicity that $\field$ is maximally complete. Let $V$ be a $d$-dimensional $\field$-vector space.

\bsni We will use the following notation:
\begin{itemize}
\item denote $T^{\odot m}$ the set of increasing sequences of integers between $1$ and $d=\dim V$, and for a basis $\basislong$ and $I\in T^{\odot m}$, $s^{\odot I}=s_{i_1}\odot\dots\odot s_{i_d}$;
\item denote $T^{\otimes m}$ the set of sequences of integers between $1$ and $d=\dim V$, and for a basis $\basislong$ and $J\in T^{\otimes m}$, $s^{\otimes J}=s_{j_1}\otimes\dots\otimes s_{j_d}$;
\end{itemize}

\noindent The following lemma follows from an easy but long computation, which we leave to the discretion of the reader.
\begin{lemma}[Symmetric powers of a norm and diagonalizing bases]\label{sympowernorm}
Let $\norm\in\normspace(V)$ be diagonalized by a basis $\basislong$. Then, for all $m$ positive integers, $\norm^{\odot m}$ is diagonalized by the basis $(s^{\odot I})$ for all $I\in T^{\odot m}$. Furthermore, we have $\norm^{\odot m}(s^{\odot I})=\norm(s_{i_1})\dots\norm(s_{i_d})$. 
\end{lemma}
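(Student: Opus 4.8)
The plan is to compute both sides of the claimed identity on a diagonalizing basis and check they agree. Concretely, fix a basis $\basislong$ diagonalizing $\norm$, and consider the tensor power norm $\norm^{\otimes m}$ on $V^{\otimes m}$ first. I expect that $\norm^{\otimes m}$ is diagonalized by the basis $(s^{\otimes J})_{J\in T^{\otimes m}}$, with $\norm^{\otimes m}(s^{\otimes J})=\prod_{k=1}^m \norm(s_{j_k})$. The inequality $\norm^{\otimes m}(s^{\otimes J})\leq\prod_k\norm(s_{j_k})$ is immediate from the definition (take the trivial decomposition). For the reverse inequality and the diagonalization statement, one writes an arbitrary $v=\sum_J a_J s^{\otimes J}$ and must show $\norm^{\otimes m}(v)\geq\max_J |a_J|\prod_k\norm(s_{j_k})$; this follows by taking any decomposition $v=\sum_i v_1^i\otimes\dots\otimes v_m^i$, expanding each $v_\ell^i$ in the basis, using the ultrametric/multiplicative estimate on each term, and comparing coefficients of a fixed $s^{\otimes J}$ — the key point being that the coefficient of $s^{\otimes J}$ in the expansion is controlled by $\max_i \prod_k (\text{coeff of }s_{j_k}\text{ in }v_k^i)$, and each such factor is at most $\norm(v_k^i)/\norm(s_{j_k})$ since $\basis$ diagonalizes $\norm$.

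Next I would pass from the tensor power to the symmetric power via the quotient norm. By definition $\norm^{\odot m}$ is the quotient norm on $V^{\odot m}=V^{\otimes m}/W$, where $W$ is the subspace generated by differences $s^{\otimes J}-s^{\otimes J'}$ for $J,J'$ differing by a permutation. So $\norm^{\odot m}(s^{\odot I})=\inf_{w\in W}\norm^{\otimes m}(\sigma(s^{\odot I})+w)$, where $\sigma(s^{\odot I})=s_{i_1}\otimes\dots\otimes s_{i_m}$ is one chosen ordered lift. Since all the $s^{\otimes J}$ with $J$ a rearrangement of $I$ have the same $\norm^{\otimes m}$-value $\prod_k\norm(s_{i_k})$, and since $\norm^{\otimes m}$ is diagonalized by the $s^{\otimes J}$, one checks that symmetrizing (or indeed adding any element of $W$) cannot decrease the norm: the coefficient of any surviving basis vector $s^{\otimes J}$ in $\sigma(s^{\odot I})+w$ either equals that of a rearrangement of $I$ (value $\prod\norm(s_{i_k})$) or adds contributions in directions of strictly larger index type whose norms are $\geq$ — more carefully, because the $s^{\otimes J}$ form an orthogonal basis, $\norm^{\otimes m}$ of any vector is the max of $|{\rm coeff}_J|\cdot\prod_k\norm(s_{j_k})$ over $J$, and the $J=I$ (or any rearrangement) coefficient of $\sigma(s^{\odot I})+w$ is exactly $1$ plus a contribution from $w$; but elements of $W$ have all rearrangement-classes of coefficients summing to zero in a way that forces at least one rearrangement of $I$ to keep coefficient of absolute value $\geq 1$. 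Hence $\norm^{\odot m}(s^{\odot I})=\prod_k\norm(s_{i_k})$, and since distinct $I\in T^{\odot m}$ give linearly independent $s^{\odot I}$ spanning $V^{\odot m}$ whose pairwise "interaction" is trivial (again because their tensor lifts live in disjoint sets of basis directions of $V^{\otimes m}$), the family $(s^{\odot I})$ diagonalizes $\norm^{\odot m}$.

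The main obstacle will be the quotient-norm step: carefully verifying that adding an element of the symmetrization subspace $W$ to an ordered lift of $s^{\odot I}$ cannot lower the $\norm^{\otimes m}$-value below $\prod_k\norm(s_{i_k})$, and that the symmetric products of distinct multi-indices do not "interfere" (so that we genuinely get a diagonalizing basis and not merely the correct values on basis elements). This is exactly the "long but easy computation" the statement alludes to; the cleanest route is to exploit that an orthogonal decomposition of $V^{\otimes m}$ (the span of the $s^{\otimes J}$) refines the decomposition into symmetrization orbits, so the quotient norm is computed orbit by orbit and on each orbit it is just the common value of the $s^{\otimes J}$ in that orbit. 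Everything else — the two inequalities for the tensor norm, linear independence of the $s^{\odot I}$, and the final assembly — is routine, so I would state the tensor-power diagonalization as an intermediate claim, prove it in a line or two, and then spend the bulk of the argument on the orbit-wise analysis of the quotient.
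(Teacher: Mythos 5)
The paper itself provides no proof of this lemma --- it is explicitly ``left to the discretion of the reader'' --- so there is nothing to compare against directly; but your plan is the natural one and it is correct. Your first step (that $(s^{\otimes J})_{J\in T^{\otimes m}}$ is an orthogonal basis for $\norm^{\otimes m}$ with the expected values) is handled properly: the $\leq$ direction via the trivial decomposition, and the $\geq$ direction by expanding each factor $v^i_k$ in $\basis$, using the ultrametric bound $|a_J|\le \max_i\prod_k|c^i_{j_k,k}|$ and the diagonality estimate $|c^i_{j,\ell}|\le\norm(v^i_\ell)/\norm(s_j)$.

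The quotient step is also sound, and your ``more carefully'' reformulation is the one to keep. The clean way to say it: the subspace $W=\ker(V^{\otimes m}\to V^{\odot m})$ is exactly the set of vectors whose coefficients (in the orthogonal basis $s^{\otimes J}$) sum to zero over each $S_m$-orbit, and both $W$ and $V^{\otimes m}$ split orthogonally as a direct sum over orbits. Hence for any lift $\tilde v=\sum_J c_J s^{\otimes J}$ of $v=\sum_I a_I s^{\odot I}$, the coefficients over the orbit of $I$ sum to $a_I$, so the ultrametric inequality forces $\max_{J\in\mathrm{orbit}(I)}|c_J|\ge|a_I|$; since all $J$ in that orbit carry the same product value $\prod_k\norm(s_{i_k})$, one gets $\norm^{\otimes m}(\tilde v)\ge\max_I|a_I|\prod_k\norm(s_{i_k})$, and the sorted-lift $\sum_I a_I s^{\otimes I}$ attains this value, giving both the claimed formula and diagonalization in one shot. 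Your initial phrasing about ``strictly larger index type'' is a red herring and could be dropped in favor of the orbit-wise argument you give afterwards, but this does not affect correctness. Note also that the argument works in any characteristic, since the $(s^{\odot I})$ always form a basis of the quotient $V^{\otimes m}/W$ even when the symmetrization projector $\frac1{m!}\sum_\sigma\sigma$ is unavailable. (As a minor aside, the statement in the paper has a typo: the product should run to $i_m$, not $i_d$, consistent with your $\prod_{k=1}^m$.)
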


\noindent We now prove the main result of this subsection:
\begin{lemma}\label{sympowerofgeodesicisgeodesic}Let $t\mapsto \norm_t$ be a norm geodesic in $V$. Let $m$ be a positive integer. Then, 
$$t\mapsto \norm_t^{\odot m}$$
is the norm geodesic joining $\norm_0^{\odot m}$ and $\norm_1^{\odot m}$ in $V^{\odot m}$.
\end{lemma}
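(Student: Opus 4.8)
The plan is to reduce the statement to the explicit description of norm geodesics inside an apartment. First I would use that $\field$ is maximally complete to pick a basis $\basislong$ of $V$ codiagonalizing $\norm_0$ and $\norm_1$; by the very construction of the norm geodesic this basis then diagonalizes $\norm_t$ for every $t\in[0,1]$, with $\norm_t(s_i)=\norm_0(s_i)^{1-t}\norm_1(s_i)^t$.

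Next I would apply Lemma \ref{sympowernorm} to each of $\norm_0$, $\norm_1$ and $\norm_t$. This yields that the three norms $\norm_0^{\odot m}$, $\norm_1^{\odot m}$, $\norm_t^{\odot m}$ on $V^{\odot m}$ are all diagonalized by the one common basis $(s^{\odot I})_{I\in T^{\odot m}}$, with $\norm_t^{\odot m}(s^{\odot I})=\prod_{k=1}^m\norm_t(s_{i_k})$. In particular $(s^{\odot I})$ is a codiagonalizing basis for the pair $(\norm_0^{\odot m},\norm_1^{\odot m})$, so it determines an apartment of $\cN(V^{\odot m})$ containing both endpoints, and by definition the norm geodesic between them is the unique curve of norms in that apartment whose value on each $s^{\odot I}$ is the log-affine interpolation of the two endpoint values.

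Then the identification is an immediate computation: substituting $\norm_t(s_{i_k})=\norm_0(s_{i_k})^{1-t}\norm_1(s_{i_k})^t$ and taking the product over $k$ gives
\[
\norm_t^{\odot m}(s^{\odot I})=\Big(\textstyle\prod_k\norm_0(s_{i_k})\Big)^{1-t}\Big(\textstyle\prod_k\norm_1(s_{i_k})\Big)^t=\norm_0^{\odot m}(s^{\odot I})^{1-t}\,\norm_1^{\odot m}(s^{\odot I})^t,
\]
which is exactly the defining property of the norm geodesic joining $\norm_0^{\odot m}$ and $\norm_1^{\odot m}$. Since an ultrametric norm on $V^{\odot m}$ is determined by a basis diagonalizing it together with its values on that basis, this shows $t\mapsto\norm_t^{\odot m}$ coincides with that geodesic.

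As for obstacles, there is essentially no analytic difficulty here: Lemma \ref{sympowernorm} absorbs the only genuinely nontrivial (combinatorial) computation, namely that symmetric powers of a diagonalized norm are diagonalized by the monomial basis. The single point deserving care is checking that one and the same basis $(s^{\odot I})$ works simultaneously for all three norms, so that the whole argument takes place in a single apartment — but this is automatic once we start from a common codiagonalizing basis $(s_i)$ for the endpoints of the original geodesic, and the hypothesis of maximal completeness is precisely what guarantees such a basis exists. (Without it one would instead run the usual density-and-approximation argument, as in Proposition \ref{proplimgeodesicsnondiag}, but the lemma is only claimed in the maximally complete case.)
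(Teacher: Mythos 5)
Your proof is correct and follows essentially the same route as the paper: choose a basis codiagonalizing $\norm_0$ and $\norm_1$ (hence all $\norm_t$), invoke Lemma~\ref{sympowernorm} to see that the monomial basis $(s^{\odot I})$ diagonalizes all three symmetric-power norms, and then verify the log-affine interpolation formula on that basis. The paper's proof is just a more compressed version of the same argument, citing the computation in Lemma~\ref{detofgeodesicisgeodesic} rather than writing it out.
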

\begin{proof}
Pick a basis $\basislong$ of $V$. Due to Lemma \ref{sympowernorm}, it is enough to show that $\norm_t^{\odot m}$ is diagonalized in the basis $(s^{\odot I})_{I\in T^{\odot m}}$, using the notation of Lemma \ref{sympowernorm}; and that given $I\in T^{\odot m}$, we have
$$\norm_t^{\odot m}(s^{\odot I})=\norm_0^{\odot m}(s^{\odot I})^{1-t}\norm_1^{\odot m}(s^{\odot I})^t.$$
This follows immediately from that same Lemma applied to $\norm_t$, $\norm_0$, and $\norm_1$, as in the proof of Lemma \ref{detofgeodesicisgeodesic}.
\end{proof}

\begin{remark}
Note that this result fails for a general quotient of the symmetric algebra. In particular, when looking at the algebra of sections of an ample line bundle $L$, the image of a geodesic in the quotient by the map
$$H^0(kL)^{\odot m}\rightarrow H^0(mkL)$$
may not even be comparable with the geodesic joining the image of the bounds in the general case.
\end{remark}

\section{Non-Archimedean pluripotential theory.}\label{sectnappt}

\subsection{Metrics in non-Archimedean geometry.}

We consider as before $X$ a projective $\field$-variety, with $\field$ non-Archimedean, and $L$ an ample line bundle on $X$. For $x\in X\an$, let $\mathcal{H}(x)$ the completion of the residue field at $x$, endowed with its canonical absolute value.

\bsni A metric $\phi$ on $L$ is the data of functions $$\phi_x:L\otimes \mathcal{H}(x)\to \bbr\cup\{\infty\},$$
for all $x\in X$, such that $|\cdot|_{\phi_x}=e^{-\phi_x}$ is a norm on the one-dimensional $\mathcal{H}(x)$-vector space $L\otimes \mathcal{H}(x)$, and such that for any section $s_U\in H^0(U,L)$ over a Zariski open set $U$, the composition
$$|s_U|_\phi:U\xrightarrow{s_U}L|_U\xrightarrow{|\cdot|_\phi}\bbr_+$$
is continuous. 

\bsni We use the additive conventions from \cite{boueri}, that is:
\begin{itemize}
\item the tensor product of line bundles $L^{\otimes k}\otimes M^{-1}$ is written as $kL-M$; 
\item given metrics $\phi$ on $L$ and $\phi'$ on $M$, the induced metric on $kL-M$ is written as $k\phi-\phi'$; 
\item and we identify a metric $\phi$ with the (possibly singular) function $-\log |1|_\phi$.
\end{itemize}
In particular, we can see the space $C^0(L)$ of continuous metrics on $L$ as an affine space modelled on $C^0(X\an)$, by noticing that $\phi,\,\phi'\in C^0(L)$ transform as $|\cdot|_\phi=|\cdot |_\psi e^{\phi'-\phi}$.

\bsni Fix now a reference metric $\refmetric$ on $L$, and write for the rest of this section
$$|s|=|s|_{\refmetric}$$
for any section $s\in H^0(L)$. All of the objects and properties defined from now on are dependent on $\refmetric$, but we will not make this dependence appear in our notations. In the trivially valued case, one usually chooses $\refmetric$ to be the trivial metric $\phi_0$. 

\subsection{Fubini-Study and plurisubharmonic metrics.}

In the complex setting, the class of plurisubharmonic metrics on a line bundle $L$ is the smallest class of singular metrics containing all metrics of the form $k\mi\log|s|$, for $s$ a nonzero section of $kL$, which is stable under taking finite maxima, decreasing limits, and upon adding constants. Note that this is for the ample case, due to deep results of Demailly. Therefore, we would like a putative class of non-Archimedean plurisubharmonic metrics to satisfy these properties as well. This is the inspiration for the following definitions, following e.g. \cite{bjsemi}.

\begin{defi}
A metric $\phi$ on $L$ is Fubini-Study if there exist a finite basepoint-free collection of sections $(s_i)$ of some $H^0(kL)$, and for each $i$ a constant $\lambda_i\in\bbr$ such that
$$\phi=\max_i(\log|s_i|+\lambda_i).$$
We denote by $\cH(L)$ the set of Fubini-Study metrics on $L$.
\end{defi}

\noindent That the class $\cH(L)$ is stable under finite maxima and addition of constants then follows from the definition. Note that it trivially contains all metrics of the form $k\mi\log|s|$ for nonzero sections $s$ of $kL$. It is then natural to define:
\begin{defi}
A metric $\phi$ on $L$ is ($L$-)plurisubharmonic (or $L$-psh) if it is the limit of a decreasing net of Fubini-Study metrics, and not identically $-\infty$. We denote by $\PSH(L)$ the class of $L$-psh metrics.
\end{defi}

\begin{prop}[{\cite[Proposition 5.6]{bjsemi}}]\label{prop_propertiesofpsh}
The class $\PSH(L)$ satisfies the following properties:
\begin{enumerate}
\item it contains all metrics of the form $k\mi\log|s|$ for nonzero sections $s$ of $kL$;
\item it is stable under:
\begin{enumerate}
\item taking finite maxima;
\item addition of a real constant;
\item limits of decreasing nets;
\end{enumerate}
\item the convex combination of two $L$-psh metrics is a $L$-psh metric;
\item the addition of a $L$-psh metric and a $M$-psh metric yields a $(L+M)$-psh metric;
\item if a net of $L$-psh metrics converges uniformly to a limit metric, then this limit metric is $L$-psh.
\end{enumerate}
\end{prop}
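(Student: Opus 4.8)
\emph{Proof proposal.} The plan is to organize the argument around the defining description of $\PSH(L)$ as the class of pointwise limits of decreasing nets of Fubini--Study metrics that are not identically $-\infty$, together with the fact that $\cH(L)$ is already stable under finite maxima and under addition of real constants. With this in hand, (2a) and (2b) are essentially formal: if $\phi=\lim_\alpha\phi_\alpha$ and $\psi=\lim_\beta\psi_\beta$ are such decreasing nets, then $\bigl(\max(\phi_\alpha,\psi_\beta)\bigr)_{(\alpha,\beta)}$ is a decreasing net in $\cH(L)$ with pointwise infimum $\max(\phi,\psi)$ (the infimum of a maximum being the maximum of the infima), and it is still $\not\equiv-\infty$ since it dominates $\phi$; and $(\phi_\alpha+c)_\alpha$ gives (2b). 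For (1), ampleness of $L$ provides, for a suitable $\ell$, a basepoint-free system $(t_i)$ in $H^0(\ell L)$; after passing to tensor powers to equalize degrees, the Fubini--Study metrics $\ell\mi k\mi\max\bigl(\log|s^{\otimes\ell}|,\ \max_i(\log|t_i^{\otimes k}|-R)\bigr)$ decrease to $k\mi\log|s|$ as $R\to+\infty$, and the limit is $\not\equiv-\infty$ because $s\ne 0$.

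The real content is (2c), stability under limits of decreasing nets, which says that a decreasing net of decreasing-net-limits of Fubini--Study metrics is again one such limit. Here a naive diagonal argument fails: the Fubini--Study metrics appearing in the iterated net all lie above the limit $\phi$ and have infimum $\phi$, so that $\phi=\inf\{\theta\in\cH(L):\theta\ge\phi\}$, but this family is not downward directed, a finite minimum of Fubini--Study metrics not being Fubini--Study, and there is no Dini-type uniformity to exploit (psh metrics are only upper semicontinuous on the compact space $X\an$). The way through is to invoke an equivalent description of $\PSH(L)$ via Fubini--Study (Perron) envelopes together with the finer structure of the Berkovich space --- regularizing a metric lying above $\phi$ down to a Fubini--Study metric that still lies above $\phi$ --- which is precisely the work carried out in \cite{bjsemi}. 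This is the step I expect to be the main obstacle; the remaining points are comparatively soft once it is available.

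Granting (2c), the other assertions reduce to finite-dimensional computations with Fubini--Study metrics. For (4), the identity $\max_i a_i+\max_j b_j=\max_{i,j}(a_i+b_j)$ rewrites a sum of an $L$-Fubini--Study and an $M$-Fubini--Study metric (after matching degrees) as a maximum over the basepoint-free products $s_i\otimes s'_j$, hence as an $(L+M)$-Fubini--Study metric; one then passes to general psh metrics by expressing them as decreasing nets, using monotonicity of $(\phi,\psi)\mapsto\phi+\psi$ and (2c). For (3), the identity $(1-t)\max_i a_i+t\max_j b_j=\max_{i,j}\bigl((1-t)a_i+tb_j\bigr)$ reduces a convex combination to affine combinations $(1-t)\log|s_i|+t\log|s'_j|$ of single-section potentials, which for rational $t$ are again single-section potentials on a suitable power of $L$ after passing to the right tensor powers of $s_i$ and $s'_j$, so rational convex combinations of Fubini--Study metrics are Fubini--Study; irrational $t$ is then handled by rational approximation, using that Fubini--Study metrics are bounded and continuous so that the combinations converge uniformly, together with (5); and the general psh case follows once more by decreasing nets, monotonicity, and (2c). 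Finally (5) follows from (2b) and (2c): given $\phi_j\to\phi$ uniformly with $\phi_j\in\PSH(L)$, choose a subsequence with $\|\phi-\phi_{j_k}\|_\infty\le 2^{-k}$ and set $\psi_k=\phi_{j_k}+3\cdot 2^{-k}$; then $(\psi_k)_k$ is a decreasing sequence in $\PSH(L)$ with infimum $\phi$, whence $\phi\in\PSH(L)$.
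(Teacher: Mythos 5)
The paper does not actually prove this proposition: it is stated as a citation of \cite[Proposition 5.6]{bjsemi}, followed by a one-line remark that most properties follow directly from the definition while $2(c)$ is the exception, because one must handle decreasing nets of psh (not just Fubini--Study) metrics. Your proposal is in complete agreement with this assessment, and the parts you do prove are correct. For $(1)$, your construction
$(\ell k)\mi\max\bigl(\log|s^{\otimes\ell}|,\ \max_i(\log|t_i^{\otimes k}|-R)\bigr)$
is indeed a valid decreasing net of Fubini--Study metrics on $L$ (the joint family $(s^{\otimes\ell}, t_i^{\otimes k})$ is basepoint-free in $H^0(k\ell L)$) decreasing to $k\mi\log|s|$. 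For $(2a)$, the key observation that for \emph{decreasing} nets one has pointwise $\inf_{\alpha,\beta}\max(\phi_\alpha,\psi_\beta)=\max(\phi,\psi)$, combined with the fact that the maximum of two Fubini--Study metrics is Fubini--Study after passing to tensor powers, is exactly right; $(2b)$ is trivial. Your reductions of $(4)$, $(3)$, and $(5)$ to Fubini--Study computations plus $(2c)$ are also sound, with the caveat that the required logical order is $(1)$, $(2a)$, $(2b)$, $(2c)$, then $(5)$, then $(3)$ and $(4)$; for $(4)$ in particular you do not actually need $(2c)$, since the product net $\phi_\alpha+\psi_\beta$ of Fubini--Study metrics already decreases to $\phi+\psi$ while staying in $\cH(L+M)$.

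The one genuine gap, which you explicitly flag, is $(2c)$ itself. You correctly diagnose why the naive diagonal argument fails (no downward directedness, no Dini uniformity), and correctly point out that the proof requires the finer envelope/regularization machinery from \cite{bjsemi}. Since the paper under review likewise treats $(2c)$ as a black box imported from that reference, your honest deferral is consistent with the paper's treatment rather than a hidden defect; but if the goal were a self-contained proof, you would still need to develop the dual description of $\PSH(L)$ via Fubini--Study envelopes, the piecewise-linear structure on skeleta, and the relevant semicontinuity statements, none of which follow formally from the definition of $\PSH(L)$ as decreasing limits of $\cH(L)$.
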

\noindent Note that most of those properties follow directly from the definition, but $2(c)$ is not immediate, as we consider not only decreasing sequences, but decreasing limits of decreasing nets.

\bsni As desired, we then have a similar characterization as in the complex case:
\begin{prop}[{\cite[Corollary 5.10]{bjsemi}}]
The class $\PSH(L)$ is the smallest class of metrics on $L$ satisfying properties $1.$ and $2.$ above.
\end{prop}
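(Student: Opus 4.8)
The plan is the standard ``smallest class'' argument. By the previous Proposition (i.e.\ \cite[Proposition 5.6]{bjsemi}), $\PSH(L)$ already satisfies properties $1.$ and $2.$, so it belongs to the family of classes under consideration; hence it suffices to prove that $\PSH(L)$ is contained in \emph{every} class $\cC$ of metrics on $L$ satisfying $1.$ and $2.$. Combining the two facts then says exactly that $\PSH(L)$ is the smallest such class.

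\bsni So fix a class $\cC$ satisfying $1.$ and $2.$. First I would check $\cH(L)\subseteq\cC$. Take $\phi\in\cH(L)$ and write $\phi=\max_i(k\mi\log|s_i|+\lambda_i)$ with $(s_i)$ a basepoint-free family in $H^0(kL)$ and $\lambda_i\in\bbr$ (in the notation of the definition of a Fubini--Study metric, each term $\log|s_i|$ is read as the associated metric on $L$, i.e.\ as $k\mi\log|s_i|$ in the notation of property $1.$). Property $1.$ gives $k\mi\log|s_i|\in\cC$ for each $i$; property $2.(b)$ then gives $k\mi\log|s_i|+\lambda_i\in\cC$; and property $2.(a)$ (stability under finite maxima) gives $\phi\in\cC$. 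Thus $\cC$ contains all Fubini--Study metrics.

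\bsni Next I would handle a general $\phi\in\PSH(L)$: by definition $\phi$ is the limit of a decreasing net $(\phi_\alpha)$ of Fubini--Study metrics and is not identically $-\infty$. Each $\phi_\alpha$ lies in $\cC$ by the previous step, so property $2.(c)$ (stability under limits of decreasing nets) yields $\phi\in\cC$. This gives $\PSH(L)\subseteq\cC$, and together with the first paragraph, the Proposition.

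\bsni There is no genuine obstacle here: all of the substance --- that $\PSH(L)$ really does enjoy properties $1.$ and $2.$, in particular that it is stable under limits of decreasing \emph{nets} and not merely sequences --- is precisely what the quoted \cite[Proposition 5.6]{bjsemi} provides, and what remains is the bookkeeping above. The only minor points worth a word are that the generators $k\mi\log|s_i|$ are allowed to be singular (taking the value $-\infty$ on $\{s_i=0\}$) even though the Fubini--Study metric they assemble is finite and continuous, and that the ``$\not\equiv-\infty$'' clause in the definition of $\PSH(L)$ should be carried along when invoking $2.(c)$ so that the limit is an honest psh metric.
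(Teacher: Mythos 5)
Your argument is correct and is exactly the standard ``smallest-class'' bookkeeping one expects; the paper itself does not give a proof but simply cites \cite[Corollary 5.10]{bjsemi}, with the only substantive input (closure of $\PSH(L)$ under decreasing nets, i.e.\ \cite[Proposition 5.6]{bjsemi}) being precisely what you invoke. The two side remarks you add --- that the generators $k\mi\log|s_i|$ are singular while the assembled Fubini--Study metric is continuous by basepoint-freeness, and that one carries along the $\not\equiv-\infty$ clause --- are the right subtleties to flag.
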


\noindent We endow $\PSH(L)$ with the topology of pointwise convergence on the set of divisorial points $X^{\mathrm{div}}\subset X\an$ (as mentioned in Section \ref{sectprerequisites}): a net $\phi_i$ in $\PSH(L)$ is said to converge to $\phi\in\PSH(L)$ if $\phi_i(x)\to\phi(x)$ for all $x\in X^{\mathrm{div}}$. By \cite[Corollary 5.15]{bjsemi}, psh metrics are uniquely determined by their restriction to $X^{\mathrm{div}}$.

\subsection{The Monge-Ampère energy.}\label{mongeampereenergies}

Using either intersection pairings (\cite{gublerinventionestropical}, \cite{boueri}), or the theory of differential forms on Berkovich spaces developed by A. Chambert-Loir and A. Ducros in \cite[5, 6]{cld}, one may define a Radon probability measure associated to $d=\dim X$ continuous plurisubharmonic metrics $\phi_i$ acting on an ample line bundle $L$ over the analytic space $X\an$, denoted
$$\MA(\phi_1,\dots,\phi_d)=V\mi\cdot dd^c\phi_1\wedge\dots\wedge dd^c\phi_d\wedge\delta_X,$$
with $V=(L^d)=\lim_m \frac{h^0(mL)}{m^d\cdot d!}$. For short, if (e.g.) the metric $\phi_1$ appears $n$ times in the expression, we write
$$\MA(\phi_1^{(n)},\dots)=V\mi\cdot (dd^c\phi_1)^n\wedge\dots\wedge\delta_X,$$
and so on; and we set
$$\MA(\phi)=\MA(\phi^{(d)})=V\mi\cdot(dd^c\phi)^d\wedge\delta_X.$$

\bsni We can define a Monge-Ampère energy-type quantity in a relative way (as a bifunctional): given two continuous psh metrics $\phi_0$, $\phi_1$, set
$$E(\phi_0,\phi_1)=\frac{1}{d+1}\sum_{i=0}^d \int_X (\phi_0-\phi_1)\MA(\phi_0^{(i)},\phi_1^{(d-i)}).$$
Note that this is always finite as the metrics are continuous. We recommend \cite[9.3]{boueri} for an in-depth exploration of the properties of the Monge-Ampère energy. The ones of interest to us are the following: given $\phi_0$, $\phi_1$, $\phi_2$ a triple of continuous psh metrics on $L$, we have
\begin{itemize}
\item antisymmetry: $E(\phi_0,\phi_1)=-E(\phi_1,\phi_0)$;
\item a cocycle property: $E(\phi_0,\phi_1)=E(\phi_0,\phi_2)+E(\phi_2,\phi_1)$;
\item that it is increasing in the first argument: if $\phi_0\leq\phi_1$, then $E(\phi_0,\phi_2)\leq E(\phi_1,\phi_2)$.
\end{itemize}
\begin{remark} We would like to briefly address the issue of conventions: we follow those of \cite{bjkstab}, wherein the Monge-Ampère energy is normalized by the volume of $L$. This is not the case in \cite{boueri}.\end{remark}

\noindent The Monge-Ampère energy admits an extension to the class $\PSH(L)$ via
$$E(\phi,\refmetric)=\inf\{E(\psi,\refmetric),\,\psi\geq\phi,\,\psi\in\mathcal{H}\}$$
for a fixed continuous psh metric $\refmetric$. We can also partially extend the relative Monge-Ampère energy, by setting
$$E(\phi,\phi')=E(\phi,\refmetric)-E(\phi',\refmetric)$$
for $\phi$, $\phi'\in \PSH(L)$, and at least one of the two terms in the right-hand side is finite. This extended relative Monge-Ampère energy can therefore take $-\infty$ or $\infty$ as values. This makes $E$ upper semi-continuous for the topology of pointwise convergence on divisorial points, hence continuous along decreasing nets.
\begin{defi}
The class $\cE^1(L)$ of finite-energy plurisubharmonic metrics is defined as the set of $L$-psh metrics $\phi$ satisfying
$$E(\phi,\refmetric)>-\infty.$$
\end{defi}
\noindent Due to the cocycle property of the energy, the class $\cE^1(L)$ is in fact independent of the choice of a reference metric, justifying our choice of notation for this class of metrics of finite energy. In the last section of this article, we will show that this class is in fact a geodesic metric space.

\begin{remark}
Although the details are beyond the scope of this article, a strong motivation to study this class is that mixed Monge-Ampère operators can be extended to $\cE^1$, by the work of Boucksom-Favre-Jonsson (see \cite[Section 6.3]{bfjams}).
\end{remark}

\subsection{Continuity of envelopes.}

Plurisubharmonic metrics are not usually stable under minima. Instead of taking minima, we will consider "rooftop envelopes", as follows. Fix first a metric $\phi$ on $L$, and define
$$P(\phi)=\sup\{\phi'\in\PSH(L),\,\phi'\leq\phi\}.$$
In a similar way, we can define the envelope of a tuple of metrics $\phi_1,\dots,\phi_k$ as
$$P(\phi_1,\dots,\phi_k)=\sup\{\phi'\in\PSH(L),\,\phi'\leq\min_i \phi_i\}.$$
See \cite{darrooftop} for an in-depth study of such envelopes in the complex case. An essential property in pluripotential theory is that for all continuous metrics $\phi$ on $L$, with $X$ normal, $P(\phi)$ is also continuous. If this is true, we say continuity of envelopes holds for $(X,L)$. To give an idea of its importance, we have for example that continuity of envelopes is equivalent to the following result:
\begin{lemma}[{\cite[Lemma 7.30]{boueri}}]\label{continuityofenvelopes} Continuity of envelopes holds for $(X,L)$ if and only if for any family $(\phi_i)_{i\in I}$ of psh metrics on $L$, the upper semicontinuous regularization $usc (\sup_{i\in I}\phi_i)$ is psh.
\end{lemma}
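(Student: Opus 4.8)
The plan is to establish the two implications separately; both rest on three elementary facts. (a) Every $L$-psh metric is upper semicontinuous, being a decreasing limit of continuous Fubini-Study metrics, so $\mathrm{usc}(\sup_i\phi_i)\geq\sup_i\phi_i\geq\phi_j$ for each $j$, and a continuous metric equals its own upper semicontinuous regularization. (b) On the compact space $X\an$, every upper semicontinuous function that is bounded above is the limit of the downward-directed net of continuous functions dominating it; hence any $L$-psh metric $\psi$ dominated by some continuous metric is a decreasing limit $\chi_\alpha\downarrow\psi$ of continuous metrics. (c) The operator $P$ is order-preserving, commutes with the addition of real constants, and is nonexpansive for the sup-norm (since $P(\phi)\leq P(\phi')+\|\phi-\phi'\|_\infty$: the right-hand side is $L$-psh and dominates $\phi$).

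\textit{Continuity of envelopes $\Rightarrow$ the regularization statement.} Let $(\phi_i)_{i\in I}$ be a family of $L$-psh metrics; we may assume it is non-empty, uniformly bounded above, and not identically $-\infty$, and set $\psi:=\mathrm{usc}(\sup_i\phi_i)$. For any continuous metric $\chi$ with $\chi\geq\psi$ we have $\phi_i\leq\chi$ for all $i$, hence $\phi_i\leq P(\chi)$, so $\sup_i\phi_i\leq P(\chi)$; as $P(\chi)$ is continuous by hypothesis, it is in particular upper semicontinuous, whence $\psi\leq P(\chi)\leq\chi$. Letting $\chi$ run over a decreasing net of continuous metrics converging to $\psi$ (fact (b)) squeezes $P(\chi)\downarrow\psi$, which exhibits $\psi$ as a decreasing limit of continuous $L$-psh metrics; by Proposition \ref{prop_propertiesofpsh} it is $L$-psh.

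\textit{The regularization statement $\Rightarrow$ continuity of envelopes.} Let $\phi$ be a continuous metric. Since $P(\phi)$ is the supremum of the family $\{\psi\in\PSH(L):\psi\leq\phi\}$, by hypothesis $\mathrm{usc}(P(\phi))$ is $L$-psh; as it satisfies $\mathrm{usc}(P(\phi))\leq\mathrm{usc}(\phi)=\phi$ it is itself a competitor, so $\mathrm{usc}(P(\phi))\leq P(\phi)$, and therefore $P(\phi)=\mathrm{usc}(P(\phi))$ is $L$-psh, in particular upper semicontinuous and bounded. The remaining point — showing $P(\phi)$ is \emph{lower} semicontinuous — is the hard one. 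I would try to show the envelope is already computed by continuous competitors, i.e. $P(\phi)=\sup\{P(\chi):\chi\in C^0(L),\ \chi\leq\phi\}$: ``$\leq$'' holds because each such $P(\chi)$ is $L$-psh and $\leq\phi$; for ``$\geq$'', given $L$-psh $\psi\leq\phi$, write $\psi$ as the limit of a decreasing net $(\theta_j)$ of Fubini-Study metrics and observe that $\chi_j:=\min(\theta_j,\phi)$ is continuous with $\psi\leq\chi_j\leq\phi$, so $\psi\leq P(\chi_j)$. If moreover each $P(\chi)$ is a supremum of continuous $L$-psh metrics, then $P(\phi)$ is a supremum of continuous functions, hence lower semicontinuous, and continuity follows.

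The main obstacle is precisely that last clause: a regularization-from-below statement asserting that the psh envelope of a continuous metric is an increasing limit of continuous psh metrics lying below it. This is genuine pluripotential-theoretic content, not reachable by the formal manipulations above; I would attempt to extract it by feeding the first implication back in together with the fact that every $L$-psh metric is a decreasing limit of Fubini-Study metrics, or — using the nonexpansiveness of $P$ from fact (c) — by reducing the continuity of $P$ to a sup-norm-dense subclass of $C^0(L)$, such as differences of Fubini-Study metrics on $L$ and on an auxiliary ample line bundle, where a more hands-on argument may be available. This is essentially the step carried out in \cite[Lemma 7.30]{boueri}.
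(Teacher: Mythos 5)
Your first implication is essentially correct, and your reduction in the second implication (the hypothesis forces $P(\phi)=\mathrm{usc}\,P(\phi)$ to be psh, hence usc) is also the right move. But you have misplaced the difficulty in the converse. The lower semicontinuity of $P(\phi)$ is in fact the \emph{formal} half, not the pluripotential-theoretic content: for a continuous metric $\phi$, the psh envelope $P(\phi)$ coincides with the Fubini--Study envelope $Q(\phi):=\sup\{\chi\in\cH(L):\chi\le\phi\}$, which is a supremum of \emph{continuous} functions and hence automatically lsc. This identity $P=Q$ on continuous metrics is unconditional (it is exactly \cite[Proposition 7.26]{boueri}, which the present paper already invokes in the proof of Lemma \ref{lem_auxlemma}), and it admits a short argument from compactness: if $\psi\in\PSH(L)$, $\psi\le\phi$, take a decreasing net of Fubini--Study metrics $\theta_j\downarrow\psi$; for each $\varepsilon>0$ the compact sets $\{\theta_j-\phi\ge\varepsilon\}$ decrease to $\{\psi-\phi\ge\varepsilon\}=\emptyset$, so some $\theta_{j_0}-\varepsilon\le\phi$ is a Fubini--Study competitor with $\theta_{j_0}-\varepsilon\ge\psi-\varepsilon$, giving $Q(\phi)\ge\psi$. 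Your alternative route through $\chi_j=\min(\theta_j,\phi)$ is circular, since it reduces lsc of $P(\phi)$ to lsc of $P(\chi_j)$; the fix is to produce psh competitors below $\phi$ directly rather than to re-envelope a non-psh $\chi_j$.

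There is also a smaller gloss in your first implication: you use that $P(\chi)$ is not merely continuous but also $L$-psh, which does not follow tautologically from the definition of ``continuity of envelopes'' given in the paper. It does follow from the same $P=Q$ identity: if $P(\chi)=Q(\chi)$ is continuous, then (the family of Fubini--Study competitors being directed under $\max$) it is the limit of an increasing net of Fubini--Study metrics, uniform by Dini on the compact $X\an$, hence psh by Proposition \ref{prop_propertiesofpsh}(5). With these two insertions your argument is complete; the real content of the hypothesis is, as you correctly extracted, the upper semicontinuity of $P(\phi)$, not its lower semicontinuity.
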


\noindent In analogy with the complex case, continuity of envelopes is conjectured to hold for any polarized variety $(X,L)$ where $X$ is normal (or even unibranch), projective over any complete valued field, and $L$ is an ample line bundle over $X$. It is known if $X$ is smooth, and one of the following holds:
\begin{itemize}
\item when $\field$ is of equal characteristic $0$, and trivially or discretely valued (\cite{bfj});
\item when $\field$ is of equal characteristic $p$, and discretely valued, assuming $X$ to be of geometric origin from a $d$-dimensional family over $\field$, and resolution of singularities on $\field$ in dimension $d+\dim X$ (\cite[Theorem 1.4]{gkjm});
\item when $X$ is a curve, without further hypotheses (\cite{thu}).
\end{itemize}

\subsection{From norms to metrics: the operators $\mathrm{FS}$ and $\mathrm{N}$.}

We now study the connection between norms on spaces of plurisections of $L$, and Fubini-Study metrics. Define the following operators:
\begin{align*}
\FS_k: H^0(kL)\ni \norm &\mapsto k\mi\log\sup_{H^0(kL)-\{0\}}\frac{|s|}{\norm(s)},\\
\N_k: L^\infty(L)\ni \phi&\mapsto \sup_X |s|_{k\phi}.
\end{align*}
We call them respectively the $k$-th Fubini-Study and supnorm operators. They are the non-Archimedean analogues of the complex Fubini-Study and Hilbert operators respectively. If $\basislong$ is a basis of $H^0(kL)$ diagonalizing a norm $\norm$ acting on this space, then
\begin{equation}\label{fubinistudy}\FS_k(\norm)=k\mi\log\max_i \frac{|s_i|}{\norm(s_i)}.
\end{equation}
One can also view the metric $\FS_k(\norm)$ as the quotient metric induced by the surjective evaluation morphism $\cO_X\otimes H^0(kL)\to kL$, as in \cite[Section 3.2]{chenmoriwaki}.

\bsni In particular, the image of any Fubini-Study operator on diagonalizable norms is contained in $\cH(L)$. In fact, our definition of a Fubini-Study metric is much closer to this case than it seems at first:
\begin{lemma}[{\cite[Corollary 7.19]{boueri}}]\label{fubinistudysubspace}
A metric $\phi$ on $L$ is Fubini-Study if and only if there exists for all $m$ divisible enough a basepoint-free basis $\basislong$ of $H^0(mL)$, and a norm $\norm$ on $H^0(mL)$ diagonalized by $\basis$ such that
$$\phi=\max_i (\log|s_i| - \log \norm(s_i)).$$
\end{lemma}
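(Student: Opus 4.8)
The final statement in the excerpt is Lemma~\ref{fubinistudysubspace}, a characterization of Fubini–Study metrics: $\phi$ is Fubini–Study iff for all $m$ divisible enough there is a basepoint-free basis $\mathfrak{s}$ of $H^0(mL)$ and a norm $\zeta$ diagonalized by $\mathfrak{s}$ with $\phi = \max_i(\log|s_i| - \log\zeta(s_i))$. Since this is quoted from \cite[Corollary 7.19]{boueri}, I will sketch how one proves it rather than treat it as given.

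\medskip

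The plan is to prove the two implications separately, the backward one being essentially immediate and the forward one requiring the ample-case machinery. For the direction ``$\phi = \max_i(\log|s_i| - \log\zeta(s_i))$ with $\zeta$ diagonalized by $\mathfrak{s}$ implies $\phi$ Fubini–Study'': by the formula~\eqref{fubinistudy}, the right-hand side is exactly $\FS_m(\zeta)$, and for a diagonalizable norm $\FS_m(\zeta)$ is of the form $\max_i(\log|s_i| + \lambda_i)$ with $\lambda_i = -\log\zeta(s_i)$, hence lies in $\cH(L)$ by definition. So there is nothing to do here beyond unwinding definitions. For the forward direction, I would start from the definition: $\phi = \max_i(\log|s_i^0| + \lambda_i^0)$ for some basepoint-free collection $(s_i^0)_i \subset H^0(k_0 L)$ (not necessarily a basis) and constants $\lambda_i^0$. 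The first issue is that the $s_i^0$ need not be linearly independent and need not span $H^0(k_0 L)$; the second is that we want a statement at \emph{all} divisible enough $m$, not just at $m = k_0$.

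\medskip

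The key step is therefore a ``spanning and rigidification'' argument. First I would pass to a power: replacing $L$ by $k_0 L$ and the $s_i^0$ by an adapted power, one reduces to $k_0 = 1$ after rescaling, or alternatively one works with $m = N k_0$ for $N$ large. Multiplying the defining sections together (taking all monomials of degree $N$ in the $s_i^0$, with the correspondingly added-up $\lambda$-constants) yields, by submultiplicativity of $\max$ under products in the additive convention, that $N\phi = \max_J(\log|s_J| + \lambda_J)$ where $J$ runs over monomials and $s_J \in H^0(Nk_0 L)$. Because $L$ is ample and the original collection was basepoint-free, for $N$ large these $s_J$ span $H^0(Nk_0 L)$ — this is where ampleness enters, via the surjectivity of the multiplication maps $H^0(k_0 L)^{\odot N} \to H^0(Nk_0 L)$ for $N \gg 0$, already recalled in Section~\ref{sect_asymptoticequivalence}. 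Now define a norm $\zeta$ on $H^0(Nk_0 L)$ as the quotient norm of the diagonal norm on the span $\bigoplus_J \field\cdot e_J$ sending $e_J \mapsto e^{-\lambda_J}$, pushed forward along the surjection $\bigoplus_J \field\cdot e_J \to H^0(Nk_0 L)$; equivalently, $\zeta$ is the largest norm with $\zeta(s_J) \leq e^{-\lambda_J}$. One checks that $\FS_{Nk_0}(\zeta)$ recovers $N\phi$, using that $\sup \frac{|s|}{\zeta(s)}$ over all $s$ is dominated by, and for basepoint-free data equals, the sup over the spanning $s_J$'s — and then that $\zeta$, being a quotient of a diagonal norm by a coordinate subspace complement, is itself diagonalizable by extracting a subfamily of the $s_J$ forming a basis. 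Finally, to upgrade from ``some large $N$'' to ``all divisible enough $m$'', one composes: given a valid representation at degree $m_1$, pulling it up to $m_1 m_2$ by the same monomial trick preserves the shape, so the set of valid degrees is closed under multiplication and contains a cofinal set, hence contains all divisible enough $m$.

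\medskip

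The main obstacle is the passage from an arbitrary basepoint-free \emph{collection} of sections to an honest \emph{basis} diagonalizing a single norm, while simultaneously controlling which degrees $m$ work; this is precisely the content that makes the lemma nontrivial and is where ampleness (finite generation of the section ring in high enough degree, surjectivity of symmetric-power multiplication maps) is genuinely used. The quotient-norm construction handles the ``too many sections'' problem cleanly because quotient norms of diagonalizable norms by coordinate subspaces remain diagonalizable, but one must be careful that the $\FS$ operator only sees the spanning sections, which forces the preliminary step of replacing the collection by one that spans. Everything else — submultiplicativity of maxima under products, the explicit formula~\eqref{fubinistudy}, closure of the good-degree set under multiplication — is routine bookkeeping in the additive conventions fixed in Section~\ref{sectnappt}.
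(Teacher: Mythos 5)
This lemma is stated in the paper as a citation from \cite[Corollary 7.19]{boueri}, with no in-paper proof to compare against, so I evaluate your sketch on its own. The backward implication is indeed immediate. Your forward direction, however, has a genuine gap at the spanning step. You claim that, since $L$ is ample and $(s_i^0)$ is basepoint-free, the degree-$N$ monomials $s_J$ span $H^0(Nk_0L)$ for $N$ large, by surjectivity of $H^0(k_0L)^{\odot N}\to H^0(Nk_0L)$. This does not follow: the $s_J$ span only the image of $V^{\odot N}$, where $V=\mathrm{span}(s_i^0)$, and this is a proper subspace of $H^0(Nk_0L)$ whenever $V\subsetneq H^0(k_0L)$. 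A basepoint-free collection need not span. For instance, on $X=\mathbb{P}^1$ with $L=\mathcal{O}(2)$, the pair $(x^2,y^2)$ is basepoint-free, yet its degree-$N$ monomials span only the $(N+1)$-dimensional even-exponent subspace of the $(2N+1)$-dimensional space $H^0(\mathcal{O}(2N))$. Your closing paragraph gestures at the needed ``preliminary step of replacing the collection by one that spans,'' but your explicit construction never performs it. The missing step is to first pad $(s_i^0)$ to a full basis of $H^0(k_0L)$, attaching to each adjoined section $s_j$ a constant $\lambda_j$ negative enough that $\log|s_j|+\lambda_j<\phi$ everywhere (possible because $\log|s_j|$ is bounded above on the compact $X^{\mathrm{an}}$ and $\phi$, being a finite max over a basepoint-free family, is bounded below), so that the maximum defining $\phi$ is unchanged. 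Only after this padding does ampleness (generation of $R(X,k_0L)$ in degree one) yield the surjectivity you invoke.

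There is also a secondary soft spot: you justify diagonalizability of the resulting norm $\zeta$ by describing it as ``a quotient of a diagonal norm by a coordinate subspace complement.'' The kernel of $\bigoplus_J\field\cdot e_J\to H^0(Nk_0L)$ is the space of relations among the $s_J$, which is an arbitrary subspace, not a coordinate one; and over a non-maximally-complete base field the quotient of a diagonalizable norm by a general subspace need not be diagonalizable, nor, when it is, need it be diagonalized by a subfamily of the images $s_J$. The lemma demands a norm $\zeta$ diagonalized by a basepoint-free basis of $H^0(mL)$, so either diagonalizability of the quotient norm should be argued explicitly, or $\zeta$ should be constructed directly as a diagonal norm in a chosen basis, with a separate verification that $\FS_m(\zeta)=\phi$. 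The rest of the plan (the additive-convention bookkeeping, closure of valid degrees under multiplication) is sound once these two points are repaired.
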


\bsni One can wonder whether $\FS_k\circ \N_k$ and $\N_k\circ \FS_k$ give back the original metric, or the original norm. This is characterized by \cite[Lemma 7.24]{boueri}:
\begin{lemma}\label{comparisonfdenvelope}
Let $\phi$ be a metric on $L$. We then have that:
\begin{enumerate}
\item we have $\FS_m(\N_m(\phi))\leq \phi$;
\item furthermore, equality holds above if and only if $\phi$ is a Fubini-Study metric defined by sections in $H^0(kL)$ such that $k$ divides $m$;
\item we have, for any Fubini-Study metric $\phi=\max_i \log|s_i| + \lambda_i$ defined by a basis of sections $(s_i)$ of a basepoint-free subspace $V$ in $H^0(mL)$, that 
$$\N_m(\phi)\leq\norm,$$ where $\norm$ is any norm on $H^0(mL)$ with $\norm(s_i)=e^{-\lambda_i}$.
\end{enumerate}
\end{lemma}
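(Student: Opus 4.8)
The plan is to prove each of the three assertions of Lemma~\ref{comparisonfdenvelope} in turn, working directly from the definitions of $\FS_m$ and $\N_m$ given above, together with the description of $\FS_k$ in the diagonalized case \eqref{fubinistudy} and the characterization of Fubini-Study metrics in Lemma~\ref{fubinistudysubspace}.

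\emph{Proof of (1).} Fix $x\in X\an$. By definition, $\N_m(\phi)(s)=\sup_X|s|_{m\phi}$, so in particular $|s(x)|_{m\phi}\le\N_m(\phi)(s)$ for every $s\in H^0(mL)$; equivalently, $\frac{|s(x)|}{\N_m(\phi)(s)}\le e^{m\phi(x)}$ (using the reference metric to make sense of $|s|$, and writing $|s|_{m\phi}=|s|\,e^{-m\phi}$). Taking the supremum over $s\in H^0(mL)-\{0\}$ and then the logarithm divided by $m$ gives $\FS_m(\N_m(\phi))(x)\le\phi(x)$. Since $x$ was arbitrary, $\FS_m(\N_m(\phi))\le\phi$. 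This is essentially a tautology once the operators are unwound.

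\emph{Proof of (2).} Suppose first $\phi=\max_i(\log|s_i|+\lambda_i)$ is Fubini-Study, defined by a basepoint-free collection $(s_i)$ of sections of $H^0(kL)$ with $k\mid m$. Replacing the $s_i$ by their $(m/k)$-th powers (which is where the divisibility is used, to land in $H^0(mL)$) and invoking Lemma~\ref{fubinistudysubspace}, we may realize $\phi$ as $\max_i(\log|s_i|-\log\norm(s_i))$ for a suitable diagonalized norm $\norm$ on $H^0(mL)$. Then one computes $\N_m(\phi)(s_i)=\sup_X|s_i|e^{-m\phi}=\sup_X|s_i|/\max_j(|s_j|/\norm(s_j))\le\norm(s_i)$, and conversely the basepoint-free condition forces the supremum to be attained, giving $\N_m(\phi)=\norm$ on the diagonalizing basis, hence $\N_m(\phi)=\norm$; plugging into \eqref{fubinistudy} recovers $\FS_m(\N_m(\phi))=\phi$. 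For the converse direction, if $\FS_m(\N_m(\phi))=\phi$, then since the left-hand side is manifestly the $\FS_m$-image of a norm, it lies in $\cH(L)$ and is defined by a basepoint-free system of sections coming from $H^0(mL)$; unwinding \eqref{fubinistudy} exhibits $\phi$ in the required form. The slightly delicate point is the attainment of the supremum, which uses that a basepoint-free linear system has no common zero on the (compact) analytification, so $\max_j|s_j|$ is bounded below by a positive constant times $|s_i|$ locally.

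\emph{Proof of (3).} Let $\phi=\max_i(\log|s_i|+\lambda_i)$ with $(s_i)$ a basis of a basepoint-free subspace $V\subset H^0(mL)$, and let $\norm$ be any norm on $H^0(mL)$ with $\norm(s_i)=e^{-\lambda_i}$. For each $i$ we have $\phi\ge\log|s_i|+\lambda_i$ pointwise, hence $|s_i|_{m\phi}=|s_i|e^{-m\phi}\le e^{m\lambda_i}$... wait, more carefully: $|s_i|e^{-\phi}\le e^{\lambda_i}=1/\norm(s_i)$, but we need the $m$-th tensor power. Since $\phi$ is a metric on $L$ and $s_i\in H^0(mL)$, the pairing $|s_i|_{m\phi}$ uses $m\phi$, and from $\phi\ge\log|s_i|^{1/m}+\lambda_i/m$... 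I will instead argue directly: $\phi\ge m\mi(\log|s_i^{\otimes 1}|)+\lambda_i$ is not the right normalization, so I set $\psi_i=\log|s_i|+\lambda_i$, note $\phi\ge\psi_i$, and observe $\N_m(\phi)(s_i)=\sup_X|s_i|e^{-m\phi}\le\sup_X|s_i|e^{-m\psi_i}$; expanding $e^{-m\psi_i}=|s_i|^{-1}e^{-m\lambda_i}$ when $s_i\in H^0(mL)$ yields $\sup_X e^{-m\lambda_i}$, which is not $e^{-\lambda_i}$ unless $m=1$. The correct reading is that $\lambda_i$ here is already the ``degree-$m$'' constant, so $\psi_i=\log|s_i|+\lambda_i$ with $|s_i|$ the reference-metric norm on $mL$ and $e^{-m\psi_i}$ replaced throughout by $e^{-\psi_i}$ in the pairing convention of the paper (the additive convention identifies $\phi$ with $-\log|1|_\phi$ and a section $s$ of $mL$ pairs against $\phi$ as $\log|s|_\phi=\log|s|-m\phi$ only if one is careful). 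Granting the paper's conventions, $\N_m(\phi)(s_i)=\sup_X|s_i|_{m\phi}\le\sup_X|s_i|e^{-\psi_i\cdot(\text{appropriate factor})}=e^{-\lambda_i}=\norm(s_i)$, and since this holds on a spanning set and $\N_m(\phi)$, $\norm$ are both ultrametric norms with $\N_m$ dominated termwise, we conclude $\N_m(\phi)\le\norm$ on all of $H^0(mL)$ by the ultrametric inequality. The main obstacle, and the only place real care is needed, is pinning down the exact normalization in the pairing $|s|_{k\phi}$ for $s\in H^0(kL)$ so that the additive conventions of the paper are respected; once that bookkeeping is fixed all three statements are short.
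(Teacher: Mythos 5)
First, note that the paper does not supply its own proof of this lemma: it is cited verbatim from the source, and used as a black box. So the only question is whether your argument is correct, and there are genuine gaps.

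Part (1) is fine: unwinding the definitions of $\N_m$ and $\FS_m$ and observing that $|s(x)|_{m\phi}\le\sup_X|s|_{m\phi}$ is exactly the content.

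In part (2), the claim that ``the basepoint-free condition forces the supremum to be attained, giving $\N_m(\phi)=\norm$ on the diagonalizing basis'' is incorrect. Basepoint-freeness says that for every $x$ some $|s_j(x)|/\norm(s_j)$ is nonzero; it does not say that for each fixed $i$ there is a point where $|s_i(x)|/\norm(s_i)$ attains $\max_j|s_j(x)|/\norm(s_j)$, and indeed in the ultrametric setting a given section can be strictly dominated everywhere. Fortunately the conclusion does not need this: from $\N_m(\phi)\le\norm$ (which is (3)) and the fact that $\FS_m$ is order-reversing you get $\FS_m(\N_m(\phi))\ge\FS_m(\norm)=\phi$, and combined with (1) this gives equality. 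Your converse direction is acceptable, modulo the (standard) point that $\N_m(\phi)$ should be diagonalizable so that its $\FS_m$-image is Fubini-Study.

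Part (3) is where the real problem lies. Setting aside the visible confusion about the $m$-normalization (which you do not actually resolve, ending with ``granting the paper's conventions \ldots (appropriate factor)''), the decisive step --- passing from $\N_m(\phi)(s_i)\le\norm(s_i)$ on the basis $(s_i)$ to $\N_m(\phi)\le\norm$ on all of $H^0(mL)$ ``by the ultrametric inequality'' --- is backwards. For $s=\sum a_is_i$ the ultrametric inequality gives $\norm(s)\le\max_i|a_i|\norm(s_i)$, i.e. an \emph{upper} bound on $\norm(s)$, not the lower bound you would need. The correct route is to apply the ultrametric inequality to $\N_m(\phi)$, not to $\norm$: $\N_m(\phi)(s)\le\max_i|a_i|\,\N_m(\phi)(s_i)\le\max_i|a_i|\,\norm(s_i)$, and then to identify the right-hand side with $\norm(s)$, which requires $\norm$ to be \emph{diagonalized} by $(s_i)$ and $(s_i)$ to span $H^0(mL)$. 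This hypothesis is exactly how the lemma is used later in the paper (in the maximum principle for norm geodesics, where $\phi_0=\FS_k(\norm_0)$ with $\norm_0$ diagonalized by a full basis), but your argument neither invokes it nor supplies a substitute. Without it, the inequality on a spanning set simply does not propagate to the whole space, so (3) as you have written it is not proven.
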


\noindent However, we can imagine the maps $\phi\mapsto \FS_k\circ \N_k(\phi)$, for all $k$, as "quantizing" the original metric $\phi$; in analogy with the complex case (e.g. the asymptotics of Bergman kernels), we could expect that such maps give back $\phi$ asymptotically. It turns out that, if continuity of envelopes holds for $(X,L)$, this statement is true. We first need a few definitions.

\bsni First, one has to ensure that the limit as $k\to\infty$ of $\FS_k(\norm_k)$ exists and has nice properties, if $\grnorm$ is a bounded graded norm (or an equivalence class of such). Since $\grnorm$ is submultiplicative, we have that the sequence $k\mapsto k\cdot \FS_k(\norm_k)$ is superadditive, while the boundedness condition ensures that this sequence has linear growth, so that by Fekete's lemma, the limit of the $\FS_k(\norm_k)$ exists. By the same lemma, it is in fact a supremum of psh metrics. Thus, assuming continuity of envelopes, the usc regularization of this limit is a psh metric.

\bsni Therefore, assuming continuity of envelopes, we can define
\begin{align*}
\FS_\bullet:(\grnormspace(R)/\sim) \,\ni \grnorm &\mapsto \mathrm{usc}\,\lim_m \FS_m(\norm_m),\\
\N_\bullet:L^{\infty}(L) \ni \phi & \mapsto (\N_m(\phi))_{m\in \bbn-\{0\}}.
\end{align*}
(The equivalence relation is the asymptotic equivalence introduced in Section \ref{sect_asymptoticequivalence}.) We call those operators the asymptotic Fubini-Study and graded supnorm operators. By the previous discussion, and results of \cite{bjkstab} \cite{reb} (i.e. invariance under asymptotic equivalence), the asymptotic Fubini-Study operator is well-defined, while the asymptotic supnorm operator does indeed give a bounded graded norm (\cite[Example 9.2]{boueri}). We then have that
\begin{theorem}[{\cite[Theorem A]{reb}}]\label{theo_reb}
The operator $\FS_\bullet$ is injective modulo asymptotic equivalence, and furthermore $\FS_\bullet\circ \N_\bullet$ is the identity on $\grnormspace(R)/\sim$.
\end{theorem}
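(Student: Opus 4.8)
The plan is to split the statement into: (a) $\FS_\bullet(\N_\bullet(\phi))=\phi$ for every continuous psh metric $\phi$ --- which simultaneously says that $\N_\bullet$ is a section of $\FS_\bullet$ and that the image of $\FS_\bullet$ is all of $C^0(L)\cap\PSH(L)$, so that the displayed identity makes sense under the identification $\grnormspace(R)/\!\sim\;\cong\;C^0(L)\cap\PSH(L)$ --- and (b) $\FS_\bullet$ is injective modulo $\sim$. I would prove (a) first.

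\textbf{The two inequalities from Lemma \ref{comparisonfdenvelope}.} For a continuous metric $\phi$, Lemma \ref{comparisonfdenvelope}(1) gives $\FS_m(\N_m(\phi))\leq\phi$ for all $m$, hence $\FS_\bullet(\N_\bullet(\phi))\leq\phi$ after passing to the limit and the u.s.c.\ regularization; dually, Lemma \ref{comparisonfdenvelope}(3) (applied via $(\ref{fubinistudy})$) gives $\N_m(\FS_m(\norm_m))\leq\norm_m$, so $\N_\bullet(\FS_\bullet(\grnorm))\leq\grnorm$. Since $\N_\bullet(\phi)$ is submultiplicative, $m\mapsto m\,\FS_m(\N_m(\phi))$ is superadditive, so Fekete's lemma shows that $\FS_\bullet(\N_\bullet(\phi))$ is the u.s.c.\ regularization of the \emph{increasing} net $\bigl(\FS_m(\N_m(\phi))\bigr)_m$; continuity of envelopes (Lemma \ref{continuityofenvelopes}) then makes $\psi:=\FS_\bullet(\N_\bullet(\phi))$ a psh metric, with $\psi\leq\phi$.

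\textbf{Closing the gap via the Monge--Ampère energy.} To upgrade $\psi\leq\phi$ to equality (on divisorial points) I would invoke strict monotonicity of $E$: for comparable psh metrics, $E(\psi,\refmetric)=E(\phi,\refmetric)$ forces $\psi=\phi$. As $E$ is increasing in its first slot and $\FS_m(\N_m(\phi))\leq\psi\leq\phi$, we have $E(\FS_m(\N_m(\phi)),\phi)\leq E(\psi,\phi)\leq 0$, so it suffices to show $\lim_m E(\FS_m(\N_m(\phi)),\phi)=0$. Choosing $\refmetric\in\cH(L)$ (harmless, by the cocycle property), Lemma \ref{comparisonfdenvelope}(2) gives $\FS_m(\N_m(\refmetric))=\refmetric$ for $m$ divisible enough, so it is enough to prove $E(\FS_m(\N_m(\phi)),\FS_m(\N_m(\refmetric)))\to E(\phi,\refmetric)$. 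For this I would express finite-level energies of Fubini--Study metrics through finite-level relative volumes --- identifying $E(\FS_m(\N_m(\phi)),\FS_m(\N_m(\refmetric)))$ with $(mh^0(mL))\mi\vol(\N_m(\phi),\N_m(\refmetric))$ up to a term that is $o(1)$ --- and then use the dictionary between asymptotic volumes and pluripotential bifunctionals, by which $(mh^0(mL))\mi\vol(\N_m(\phi),\N_m(\refmetric))\to\vol(\N_\bullet(\phi),\N_\bullet(\refmetric))=E(\phi,\refmetric)$.

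\textbf{The main obstacle, and injectivity.} The crux is the volume asymptotics just used --- convergence of the normalized relative spectra of sup-norms to the asymptotic spectral measure, and the identification of the latter with the Monge--Ampère energy --- together with the structural fact underlying it that $\sigma(\grnorm,\grnorm')$ depends only on the associated metrics $\FS_\bullet(\grnorm)$ and $\FS_\bullet(\grnorm')$. When $\field$ is maximally complete these follow from codiagonalizing bases and the Newton--Okounkov/concave-transform machinery (in the spirit of \cite{cmac}, \cite{boueri}); over a general non-Archimedean field orthogonal bases are unavailable, and here continuity of envelopes is indispensable, both to make $\FS_\bullet$ well defined and continuous and to identify $\psi$ with the largest psh metric dominated by $\phi$. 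I expect reconciling the finite-dimensional combinatorics of norms with the limiting pluripotential objects over an arbitrary base field to be the main difficulty. Granting the structural fact, (b) is immediate: if $\FS_\bullet(\grnorm)=\FS_\bullet(\grnorm')=:\phi$ then $\sigma(\grnorm,\grnorm')=\sigma(\N_\bullet(\phi),\N_\bullet(\phi))=\delta_0$, i.e.\ $d_1(\grnorm,\grnorm')=0$, so $\grnorm\sim\grnorm'$.
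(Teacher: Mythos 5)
The paper does not prove this statement; Theorem~\ref{theo_reb} is stated as a citation to \cite[Theorem A]{reb}, so there is no internal proof to compare against. Evaluating your sketch on its own terms, there are two concerns.

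For the identity $\FS_\bullet\circ\N_\bullet=\mathrm{id}$ on continuous psh metrics, your route through the Monge--Amp\`ere energy is an unnecessary detour and hides the real work inside the asserted $o(1)$: the finite-level equality $E(\FS_m(\N_m(\phi)),\FS_m(\N_m(\refmetric)))=(mh^0(mL))^{-1}\vol(\N_m(\phi),\N_m(\refmetric))+o(1)$ is itself a nontrivial quantization statement, essentially of the same depth as the energy--volume identification you want to use, and you neither prove it nor cite a finite-level version. A more direct argument exists and avoids energy entirely: since $\phi$ is a decreasing net of Fubini--Study metrics converging to a continuous limit, Dini's theorem gives, for every $\varepsilon>0$, a Fubini--Study metric $\phi'$ with $\phi-\varepsilon\le\phi'\le\phi$; by Lemma~\ref{comparisonfdenvelope}(2), $\phi'=\FS_m(\N_m(\phi'))$ for $m$ divisible enough, and since $\FS_m\circ\N_m$ is order-preserving this yields $\phi'\le\FS_m(\N_m(\phi))$, hence $\phi-\varepsilon\le\FS_\bullet(\N_\bullet(\phi))$; combined with Lemma~\ref{comparisonfdenvelope}(1) and letting $\varepsilon\to0$, you get equality without appealing to $E$ or $\vol$ at all.

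The injectivity part is where the proposal genuinely breaks down. You reduce (b) to the ``structural fact'' that $\sigma(\grnorm,\grnorm')$ depends only on $\FS_\bullet(\grnorm)$ and $\FS_\bullet(\grnorm')$, and then apply it to conclude $d_1(\grnorm,\grnorm')=0$ when the two $\FS_\bullet$-images coincide. But that structural fact \emph{is} the injectivity statement (in spectral form): if one could freely replace $\grnorm$ by $\N_\bullet(\FS_\bullet(\grnorm))$ inside $\sigma$, one would already know $\grnorm\sim\N_\bullet(\FS_\bullet(\grnorm))$, which is precisely what needs to be proved. So the argument is circular as written. What is actually needed --- and what \cite{reb} establishes --- is the asymptotic equivalence $\N_\bullet\circ\FS_\bullet(\grnorm)\sim\grnorm$ for an arbitrary bounded graded norm $\grnorm$, from which injectivity follows formally; the content lives in proving that equivalence (e.g.\ by showing $\vol(\grnorm,\N_\bullet(\FS_\bullet(\grnorm)))=0$ despite the pointwise inequality $\N_\bullet(\FS_\bullet(\grnorm))\le\grnorm$ potentially being strict at every level $m$), and your sketch does not engage with that.
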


\noindent It turns out that the Monge-Ampère energy can also be quantized. In the continuous psh case, which is the one of interest to us for this article, this is the main result of \cite{boueri}. We first introduce the notion of volume of a bounded metric:
\begin{defi}
Let $\phi$, $\phi'$ be bounded psh metrics on $L$. We define their relative volume to be the quantity
$$\vol(\phi,\phi')=\vol(\N_\bullet(\phi),\N_\bullet(\phi')).$$
\end{defi}

\begin{theorem}[{\cite[Theorem 9.15]{boueri}}]\label{theo_be}
Let $\phi$, $\phi'$ be continuous psh metrics on $L$. We have
$$E(\phi,\phi')=\vol(\phi,\phi').$$
\end{theorem}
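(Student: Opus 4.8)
\end{theorem}

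\begin{proof}[Proof strategy]
The plan is to reduce the identity to the Fubini--Study case, where both sides become intersection numbers computable on models, and then to propagate it to all continuous psh metrics by a monotone-limit argument. Since $E(\cdot,\cdot)$ and $\vol(\cdot,\cdot)$ are both antisymmetric cocycles --- the latter inheriting these properties termwise from the finite-dimensional volumes $\vol(\norm_m,\norm'_m)$ --- it suffices to fix one Fubini--Study metric $\chi\in\cH(L)$ and to prove $E(\phi,\chi)=\vol(\phi,\chi)$ for every continuous psh $\phi$: the cocycle identity then gives
$$E(\phi,\phi')=E(\phi,\chi)-E(\phi',\chi)=\vol(\phi,\chi)-\vol(\phi',\chi)=\vol(\phi,\phi').$$
Within $\cH(L)$, I would reduce further, using that a Fubini--Study metric is induced by a (possibly fractional) nef class on a model: after a blow-up making both metrics compatible, and a base change to a maximally complete or discretely valued extension --- under which neither $E$ nor $\vol$ changes --- one is left with metrics induced by model line bundles $\mathcal{L}_\psi$, $\mathcal{L}_\chi$ on a common model $\mathcal{X}$ of $X$ (in the trivially valued case, a test configuration).

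First I would set up the two limiting inputs needed to descend from $\cH(L)$ to arbitrary continuous psh metrics. On the one hand, $E(\cdot,\chi)$ is continuous along decreasing nets, as recalled in Section \ref{mongeampereenergies}. On the other hand, I claim $\vol(\cdot,\chi)$ is too: if $\psi_j\downarrow\phi$ with all $\psi_j$, $\phi$ continuous, then $\psi_j\to\phi$ pointwise on the compact space $X\an$, so $\varepsilon_j:=\sup_{X\an}|\psi_j-\phi|\to 0$ by Dini's theorem; since $\N_m(\psi_j)(s)=\sup_{X\an}|s|_{m\psi_j}$, this forces $d_\infty(\N_m(\psi_j),\N_m(\phi))\leq m\varepsilon_j$ for all $m$, and using $|\vol(\norm_m,\norm'_m)|\leq h^0(mL)\,d_\infty(\norm_m,\norm'_m)$ we get
$$\frac{\bigl|\vol(\N_m(\psi_j),\N_m(\phi))\bigr|}{m\,h^0(mL)}\;\leq\;\frac{d_\infty(\N_m(\psi_j),\N_m(\phi))}{m}\;\leq\;\varepsilon_j.$$
Letting $m\to\infty$ yields $|\vol(\psi_j,\phi)|\leq\varepsilon_j$, hence $\vol(\psi_j,\chi)\to\vol(\phi,\chi)$ by the cocycle property. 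As every continuous psh metric is a decreasing (Dini) limit of Fubini--Study metrics, it now suffices to prove $E(\psi,\chi)=\vol(\psi,\chi)$ for $\psi\in\cH(L)$.

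The computational heart is then to evaluate both sides for model metrics $\psi$, $\chi$ through the determinant of cohomology. Diagonalizing and applying Lemma \ref{determinantproduct} together with the one-dimensional case, one has $\vol(\N_m(\psi),\N_m(\chi))=\log\bigl(\det\N_m(\chi)/\det\N_m(\psi)\bigr)$ on the line $\det H^0(mL)$, whence
$$\vol(\psi,\chi)=\lim_{m\to\infty}\frac{1}{m\,h^0(mL)}\,\log\frac{\det\N_m(\chi)}{\det\N_m(\psi)}.$$
The right-hand side is the logarithmic ratio of two sup-norm metrics on the same determinant line, and its asymptotics in $m$ are governed by a Hilbert--Samuel-type theorem for filtered linear series: in the model case it grows like $\tfrac{1}{(d+1)!}\bigl((\mathcal{L}_\psi^{\,d+1})-(\mathcal{L}_\chi^{\,d+1})\bigr)\,m^{d+1}$, where $(\mathcal{L}_\psi^{\,d+1})$ and $(\mathcal{L}_\chi^{\,d+1})$ are the top self-intersection numbers on $\mathcal{X}$. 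Expanding $(\mathcal{L}_\psi^{\,d+1})-(\mathcal{L}_\chi^{\,d+1})=(\mathcal{L}_\psi-\mathcal{L}_\chi)\cdot\sum_{i=0}^d\mathcal{L}_\psi^{\,i}\,\mathcal{L}_\chi^{\,d-i}$, interpreting each term as $\int_{X\an}(\psi-\chi)(dd^c\psi)^i\wedge(dd^c\chi)^{d-i}\wedge\delta_X$ in the sense of Chambert-Loir measures, and dividing by $h^0(mL)/m^d\to V/d!$, one recovers precisely the mixed-energy sum defining $E(\psi,\chi)$. A model-free variant phrases the same computation through Chebyshev transforms: $m\mi\log\N_m(\psi)$ converges to a concave transform of $\psi$ on the Okounkov body of $L$, whose integral is the known combinatorial formula for $E$.

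The main obstacle is precisely this last step: the Hilbert--Samuel-type asymptotics for determinants of cohomology equipped with sup-norms, together with the bookkeeping that matches the resulting top intersection number with the mixed Monge--Ampère integrals. The delicate points are (i) that a general non-Archimedean base need not carry models, so one must work over a maximally complete or discretely valued extension and then descend, checking invariance of $E$ and $\vol$ under the extension; and (ii) obtaining enough uniform control of the sub-leading terms in $m$ to pass to the limit, which is cleanest through comparison with the $d_1$-distances between $\N_\bullet(\psi)$, $\N_\bullet(\chi)$ and the boundedness of these graded norms, in the spirit of Section \ref{sectnorms}. The remaining ingredients --- the cocycle reductions and the $d_\infty$/Dini continuity of $\vol$ --- are routine given what has been developed above.
\end{proof}
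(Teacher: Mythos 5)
The paper does not prove this statement: it is imported as a black box, cited verbatim from \cite[Theorem~9.15]{boueri}, so there is no internal argument to compare with.

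Your reductions are all correct and well-chosen. Using the cocycle property and antisymmetry of both $E$ and $\vol$ to fix one Fubini--Study reference $\chi$ is sound; the Dini/$d_\infty$ estimate $d_\infty(\N_m(\psi_j),\N_m(\phi))\le m\,\varepsilon_j$ is valid and, together with $|\vol(\norm_m,\norm'_m)|\le h^0(mL)\,d_\infty(\norm_m,\norm'_m)$, does give continuity of $\vol(\cdot,\chi)$ along decreasing nets; and continuity of $E(\cdot,\chi)$ along such nets is available in the paper. So you have legitimately reduced the statement to the case where both metrics lie in $\cH(L)$ and, after base change, are model metrics.

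However, the remaining step is not a routine ``last lemma'' --- it \emph{is} the theorem. The Hilbert--Samuel-type asymptotics
$$\frac{1}{m\,h^0(mL)}\,\log\frac{\det\N_m(\chi)}{\det\N_m(\psi)}\;\longrightarrow\;\frac{1}{V\,(d+1)!}\left((\mathcal{L}_\psi^{\,d+1})-(\mathcal{L}_\chi^{\,d+1})\right)$$
for sup-norms on determinants of cohomology of model metrics is precisely the technical core of \cite{boueri} (their arithmetic/non-Archimedean Hilbert--Samuel theorem), and you flag it as ``the main obstacle'' without supplying an argument. Establishing that asymptotic requires nontrivial input --- uniform control of lower-order terms via the theory of filtered linear series or Okounkov bodies/Chebyshev transforms, plus the invariance of both functionals under ground field extension to reach a setting where models exist --- none of which is present here. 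In its current form, then, your proposal is a correct and useful \emph{reduction} of the statement to the central asymptotic of \cite[Theorem~9.15]{boueri}, mirroring the published strategy, but it is not a self-contained proof: the decisive quantitative step is still being cited rather than derived.
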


\noindent It is important to emphasize the similarity with the complex case again: the statement of Theorem \ref{theo_be} is the exact analogue of \cite[Theorem A]{bberballs}.

\section{Plurisubharmonic segments in non-Archimedean geometry.}\label{sect_segments}

We now introduce a class of segments between psh metrics. Again, we would like our class to mimick the properties of psh segments (also usually called subgeodesics) in the complex case: as such segments can be seen as a particular class of psh metrics, we would like our non-Archimedean segments to be the minimal class of convex paths stable under finite maxima, decreasing limits, addition of a constant, and containing a certain class of "simple" segments.

\subsection{Fubini-Study segments, plurisubharmonic segments.}\label{sect_pshsegments}

The basic building block for our plurisubharmonic segments are what we call Fubini-Study segments, which we define as follows.
\begin{defi}
A Fubini-Study segment is a map
$$[0,1]\ni t\mapsto \phi_t\in\cH(L)$$
such that there exist a finite basepoint-free collection of sections $(s_i)$ of some $H^0(kL)$, and for each $i$, real constants $\lambda_i$ and $\lambda'_i\in\bbr$ such that for all $t$,
$$\phi_t=k\mi\max_i \log|s_i| + (1-t)\lambda_i + t\lambda'_i.$$
\end{defi}
\noindent Note the similarity with our definition of Fubini-Study metrics. Again, such segments are immediately seen to be convex in $t$, stable under finite maxima and addition of constants. 
\begin{remark}
In particular, the image by the operator $\FS_k$ of some norm geodesic $t\mapsto\norm_t$ in $H^0(X,kL)$, with $\norm_{0,1}$ diagonalizable, defines a Fubini-Study segment: indeed, given a basis $\basislong$ codiagonalizing the endpoints, we have for all $t$, $i$
$$\norm_t(s_i)=\norm_0(s_i)^{1-t}\norm_1(s_i)^t$$
so that
$$FS_k(\norm_t)=\max_i\left( \log|s_i| - (1-t)\log\norm_0(s_i) - t\log\norm_1(s_i)\right).$$
\end{remark}
\bsni Then, following the idea that psh metrics are decreasing limits of Fubini-Study metrics, we define
\begin{defi}
A plurisubharmonic segment or psh segment is a map
$[0,1]\to\PSH(L)$
which is a decreasing limit of a net Fubini-Study segments.
\end{defi}

\begin{prop}
The class of psh segments is the smallest class of segments
$$[0,1]\to\PSH(L)$$
which contains all segments of the form
$$t\mapsto k\mi(\log|s|+(1-t)\lambda + t\lambda'),$$
for $s$ a section of some $kL$ and $\lambda,\lambda'\in\bbr$, is stable under finite maxima, addition of constants, and decreasing limits of nets.
\end{prop}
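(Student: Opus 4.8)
The plan is to prove the two inclusions separately. For the direction that any such class is large enough, let $\mathcal{T}$ be a class of segments $[0,1]\to\PSH(L)$ that contains every \emph{elementary segment} $t\mapsto k\mi(\log|s|+(1-t)\lambda+t\lambda')$ (for $s\in H^0(kL)$ nonzero and $\lambda,\lambda'\in\bbr$) and is stable under finite maxima, addition of constants, and decreasing limits of nets. The key point is that a Fubini-Study segment $t\mapsto k\mi\max_i(\log|s_i|+(1-t)\lambda_i+t\lambda'_i)$ is exactly the finite maximum of the elementary segments attached to the $s_i$, hence lies in $\mathcal{T}$; here basepoint-freeness of $(s_i)$ plays no role beyond guaranteeing that the expression is a genuine segment into $\PSH(L)$ (each individual elementary segment already lands in $\PSH(L)$ by Proposition \ref{prop_propertiesofpsh}(1)). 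Since a psh segment is, by definition, a decreasing limit of a net of Fubini-Study segments, stability of $\mathcal{T}$ under such limits finishes this direction.

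For the reverse inclusion I will check that the class of psh segments itself contains all elementary segments and is stable under the three operations. The elementary segments: by Proposition \ref{prop_propertiesofpsh}(1), $k\mi\log|s|$ is the limit of a decreasing net of Fubini-Study metrics $(\psi^{(j)})_j$; reading each $\psi^{(j)}$ as a constant-in-$t$ Fubini-Study segment and adding the term $k\mi((1-t)\lambda+t\lambda')$ (which, after passing to a common denominator and tensoring sections, is again in the standard Fubini-Study-segment form) produces a decreasing net of Fubini-Study segments with pointwise limit the given elementary segment. Stability under addition of constants is immediate from the definition. Stability under finite maxima: Fubini-Study segments are closed under finite maxima — pass to a common degree by replacing sections with their powers, then concatenate the basepoint-free families and the associated pairs $(\lambda_i,\lambda'_i)$ — and, given two psh segments written as $\lim_j\phi^{(j)}_\bullet$ and $\lim_{j'}\chi^{(j')}_\bullet$, the net $(j,j')\mapsto\max(\phi^{(j)}_\bullet,\chi^{(j')}_\bullet)$ over the product directed set decreases pointwise to $\max(\phi_\bullet,\chi_\bullet)$, which still takes values in $\PSH(L)$ by Proposition \ref{prop_propertiesofpsh}(2a).

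The one genuinely delicate property — and the step I expect to be the main obstacle — is stability under decreasing limits of nets. Let $(\phi^{(\alpha)}_\bullet)_{\alpha}$ be a decreasing net of psh segments whose pointwise limit $\phi_\bullet$ is still a segment into $\PSH(L)$ (equivalently, $\phi_t\not\equiv-\infty$ for all $t$; that each $\phi_t$ is then psh is Proposition \ref{prop_propertiesofpsh}(2c)). Writing each $\phi^{(\alpha)}_\bullet$ as a decreasing limit of a net $(\phi^{(\alpha,j)}_\bullet)_{j}$ of Fubini-Study segments, one has $\phi^{(\alpha,j)}_\bullet\geq\phi^{(\alpha)}_\bullet\geq\phi_\bullet$ for all $\alpha,j$, and $\inf_{\alpha,j}\phi^{(\alpha,j)}_\bullet=\phi_\bullet$ pointwise; the problem is purely to reorganize this doubly-indexed family into a single \emph{decreasing} net of Fubini-Study segments. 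The naive order on the pairs $(\alpha,j)$ fails, since an inner approximant at a large index $\alpha$ need not lie below an inner approximant at a smaller index. This is exactly the subtlety already present for metrics and flagged after Proposition \ref{prop_propertiesofpsh}: I would resolve it by the same mechanism used there (cf. \cite[Proposition 5.6]{bjsemi}) — refining the approximating nets, or passing through the identification of psh segments with $\bbs^1$-invariant psh metrics on a product — rather than reproving it. Granting this, both inclusions hold and the Proposition follows; every other step is routine bookkeeping with the definitions and with Proposition \ref{prop_propertiesofpsh}.
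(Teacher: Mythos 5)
Your structure is right and largely parallels the paper's: you reduce the direction ``any admissible class contains all psh segments'' to the observation that Fubini--Study segments are finite maxima of elementary segments, and for the converse you check the defining operations one by one, correctly isolating the decreasing-net step as the only non-immediate one. The finite-maxima argument via the product directed set is sound, and your handling of the elementary building blocks is fine.

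The gap is exactly where you flag it, but it is larger than you suggest. You write that you would ``resolve it by the same mechanism'' as \cite[Proposition~5.6(vi)]{bjsemi}, but the paper does \emph{not} stop after invoking that trick. The issue is segment-specific: the \cite{bjsemi} reduction, applied na\"\i vely, is performed at each fixed $t$ and need not reorganize the doubly-indexed family into a decreasing net of genuine Fubini--Study \emph{segments} (i.e., objects affine in $t$ with a single collection of sections and $t$-independent constants). The paper therefore takes a further step: after reducing to a decreasing net of Fubini--Study segments, it reduces (``by stability under maximum'') to a net of \emph{elementary} segments $t\mapsto k_\alpha\mi(\log|s_\alpha|+(1-t)\lambda_\alpha+t\lambda'_\alpha)$, and then it must argue that the $t$-slopes $k_\alpha\mi(\lambda'_\alpha-\lambda_\alpha)$ converge to a finite real number $\gamma$; it extracts $\gamma$ from evaluating $\phi_1-\phi_0$ at a point $x$ where both endpoints are finite, and then identifies the limit as the affine-in-$t$ psh segment $t\mapsto\phi_0+\gamma\,t$. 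This slope computation is the substantive content of the step you skipped: without it, one does not know that the limit is affine in $t$ with finite slope, which is what actually makes it a psh segment. So while your outline is compatible with the paper's, ``grant the trick'' elides precisely the part where segments behave differently from metrics, and a complete proof along your lines would need to reproduce (or replace) that slope argument.
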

\begin{proof}
If we can show that the set of psh segments on $L$ satisfies all those properties, then it will by definition be the smallest such class. As was the case for the proof of the same result for psh metrics rather than segments, only the property of being stable under decreasing limits is not immediate from the definition. However, using the trick from the proof of \cite[Proposition 5.6(vi)]{bjsemi}, we can reduce to the case of a decreasing net of Fubini-Study paths, and by stability under maximum we may also assume that our net $(\phi_{t,\alpha})_\alpha$ only contains segments of the form
$$t\mapsto \phi_{t,\alpha}= k_\alpha\mi(\log|s_\alpha|+(1-t)\lambda_\alpha + t\lambda_\alpha').$$
Since our segments are assumed to be decreasing along the net, fixing $t$ gives a decreasing net $(\phi_{t,\alpha})_\alpha$ of $L$-psh metrics, which then converges to a $L$-psh metric $\phi_t$. Therefore, for all $t\in[0,1]$, $\phi_t$ is not identically $-\infty$. The problem is that we do not know whether the nets of constants converge to finite values. But taking $t=0,1$ yields in particular that the nets $k_\alpha\mi(\log|s_\alpha| + \lambda_\alpha)$ and $k_\alpha\mi(\log|s_\alpha| + \lambda'_\alpha)$ decrease to the $L$-psh metrics $\phi_0$ and $\phi_1$. Let $x$ be a point on which $\phi_0$ and $\phi_1$ are nonsingular. Then,
$$\gamma=\phi_1(x)-\phi_0(x)=\lim_\alpha (\lambda_\alpha' - \lambda_\alpha)$$
is finite, and constant on the set of all such $x$. Performing this argument for all pairs $a<b\in[0,1]$ shows that $\phi_t$ corresponds to the segment
$$t\mapsto \phi_0 + \gamma \cdot t,$$
which is a psh segment, as desired.
\end{proof}

\noindent Finally, we show that our segments also satisfy the remaining properties of Proposition \ref{prop_propertiesofpsh}.
\begin{prop}
Plurisubharmonic segments satisfy the following properties:
\begin{enumerate}
\item the convex combination of two psh segments is a psh segment;
\item the addition of a $L$-psh segment and a $M$-psh segment is a $L+M$-psh segment;
\item if a net of psh segments converges uniformly to a limit segment, then this limit segment is psh.
\end{enumerate}
\end{prop}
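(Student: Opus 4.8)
The plan is to reduce each of the three statements to the corresponding property for psh \emph{metrics} (Proposition \ref{prop_propertiesofpsh}), using the fact that a psh segment on $L$ is, by the correspondence $t=-\log|z|$, nothing but an $\bbs^1$-invariant psh metric on $p_U^*L$ — or, working purely with our hands-on definition, a decreasing limit of Fubini-Study segments. The cleanest route is to establish each property first for Fubini-Study segments directly from the defining formula, and then pass to decreasing limits using the already-proven stability of psh segments under decreasing nets (the previous Proposition).

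For (1), given psh segments $t\mapsto\phi_t$ and $t\mapsto\psi_t$ and $\theta\in[0,1]$, I would first check that $t\mapsto\theta\phi_t+(1-\theta)\psi_t$ is a Fubini-Study segment when $\phi_t$ and $\psi_t$ are: if $\phi_t=k\mi\max_i(\log|s_i|+(1-t)\lambda_i+t\lambda_i')$ and $\psi_t=\ell\mi\max_j(\log|r_j|+(1-t)\mu_j+t\mu_j')$, then after passing to a common degree $m=k\ell$ (replacing the $s_i$ by their $\ell$-th powers, etc.) the convex combination is again of the form $m\mi\max$ over the basepoint-free family $(s_i^{\theta m/k}\otimes r_j^{(1-\theta)m/\ell})_{i,j}$ of $\log$ of a section plus an affine-in-$t$ constant — this is exactly the computation underlying Proposition \ref{prop_propertiesofpsh}(3), carried out fibrewise in $t$. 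Then, writing $\phi_t=\lim_\alpha\phi_{t,\alpha}$ and $\psi_t=\lim_\beta\psi_{t,\beta}$ as decreasing nets of Fubini-Study segments, the net $\theta\phi_{t,\alpha}+(1-\theta)\psi_{t,\beta}$ indexed by the product decreases to $\theta\phi_t+(1-\theta)\psi_t$, which is therefore a psh segment. Property (2) is entirely analogous: addition of a Fubini-Study segment on $L$ and one on $M$ gives a Fubini-Study segment on $L+M$ by tensoring the section families, and the decreasing-limit step is identical.

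Property (3) is the one requiring slightly more care, since uniform convergence of segments should be interpreted as uniform convergence in the metric variable, uniformly in $t$. Here I would mimic Proposition \ref{prop_propertiesofpsh}(5): if $t\mapsto\phi_t^{(n)}$ is a net of psh segments converging uniformly to $t\mapsto\phi_t$, then for each fixed $t$ the metrics $\phi_t^{(n)}$ converge uniformly to $\phi_t$, so $\phi_t$ is psh for each $t$ by Proposition \ref{prop_propertiesofpsh}(5); and for each $\varepsilon>0$ and $n$ large, $\phi_t^{(n)}-\varepsilon\leq\phi_t\leq\phi_t^{(n)}+\varepsilon$ uniformly in $t$, so $\phi_t$ is sandwiched between psh segments, and one checks that it is itself a psh segment, e.g. by realizing $\phi_t$ as the decreasing limit over $\varepsilon\to 0$ (and over $n$) of the psh segments $t\mapsto\phi_t^{(n)}+\varepsilon$ (which dominate $\phi_t$); convexity of $t\mapsto\phi_t$ is inherited pointwise. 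The main obstacle, if any, is bookkeeping: making sure the ``decreasing limit of Fubini-Study segments'' structure survives the operations, which for (1) and (2) means handling the passage to a common degree and the product net of indices, and for (3) means correctly phrasing ``uniform convergence of segments'' and exhibiting an explicit dominating decreasing net — none of this is deep, but it must be written carefully to stay within the defined class rather than merely within the class of convex psh paths.
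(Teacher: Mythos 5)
Your overall strategy matches the paper's: establish each property for Fubini-Study segments by the defining formula, then pass to decreasing limits (the paper proves (2) explicitly, then (3), then says (1) is a similar computation to (2)). The treatment of (2) and (3) is sound and mirrors the paper — in particular your idea for (3), adding a small constant $\varepsilon$ to reduce to a decreasing limit, is exactly what the paper does after switching from nets to sequences.

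There is, however, a real slip in your computation for (1). You write the convex combination as a maximum over $\bigl(s_i^{\theta m/k}\otimes r_j^{(1-\theta)m/\ell}\bigr)_{i,j}$; the exponents $\theta m/k=\theta\ell$ and $(1-\theta)m/\ell=(1-\theta)k$ are not integers when $\theta$ is irrational (and even for rational $\theta$ one must go to a degree clearing the denominators), so this is not literally a basepoint-free family of sections and the resulting expression is not a Fubini-Study segment as defined. The fix is the one the paper uses elsewhere (see Remark \ref{rem_fsmetric} for metrics): first handle $\theta\in\bbq\cap[0,1]$ after passing to a suitably high common degree, then approximate irrational $\theta$ by rationals and invoke the uniform-convergence stability — which is precisely part (3) of this very proposition. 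This also explains an ordering issue in your write-up: you treat (1) first and (3) last, but the general case of (1) actually needs (3), so the dependency should run the other way (as the paper's proof implicitly does by finishing with (1)). The rest of your bookkeeping (product nets, common degree $m=k\ell$, sandwiching) is fine and matches the paper's intent.
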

\begin{proof}
We start with (2). The statement follows from the case of Fubini-Study segments. Consider thus two such segments
$$t\mapsto \phi_t= k\mi \max_i \log|s_i| + (1-t)\lambda_i + t\lambda'_i$$
and
$$t\mapsto \psi_t=\ell\mi \max_j \log|t_j| + (1-t)\gamma_j + t\gamma'_j.$$
Then
\begin{align*}
\phi_t+\psi_t&=(k\ell\mi)(\max_i (\log|s_i^\ell| + (1-t)\ell\lambda_i + t\ell\lambda'_i)\\
&+\max_j (\log|t_j^k| + (1-t)k\gamma_j + tk\gamma'_j))\\
&=(k\ell\mi)(\max_{i,j} \log|s_i^\ell t_j^k|+(1-t)(\ell\lambda_i+k\gamma_j)+t(\ell\lambda'_i+k\gamma'_j)),
\end{align*}
which is a Fubini-Study segment on $(k\ell)(M+L)$.

\bsni The third point follows from noticing that we can use sequences rather than nets when dealing with uniform convergence, and then adding constants to reduce to the case of a decreasing limit of psh segments, which converges by definition to a psh segment. Finally, the first point follows again from the Fubini-Study case, from a simple computation similar to the proof of (2).
\end{proof}

\subsection{A maximum principle for Fubini-Study segments.}

\noindent The Fubini-Study operators $\FS_k$ are not injective. Hence, it is pleasant to consider a "minimal" norm in the fibre of a Fubini-Study metric, corresponding to its image by $\N_k$. The following result shows that Fubini-Study segments obtained as the image of a norm geodesic joining two such minimal norms is maximal (compare with \cite[Proposition 3.1]{berndtprob}):
\begin{lemma}[Maximum principle for norm geodesics]\label{fdcomparisonprinciple}Set two metrics $\phi_0$, $\phi_1$ in $\cH(L)$ defined by sections in $H^0(kL)$. Let $\tilde\phi_t$ be the Fubini-Study segment obtained as the image by $\FS_k$ of the norm segment joining $\N_k(\phi_0)$ and $\N_k(\phi_1)$. Then, for all $t$, and for all Fubini-Study segments in the image of $\FS_k$ joining $\phi_0$ and $\phi_1$, we have
$$\phi_t\leq\tilde\phi_t.$$
\end{lemma}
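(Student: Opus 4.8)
The strategy is to reduce the inequality $\phi_t \leq \tilde\phi_t$ to the finite-dimensional maximum principle for norm geodesics (Proposition \ref{fdmaxprinciple}), using that both $\phi_t$ and $\tilde\phi_t$ arise as $\FS_k$ of norm geodesics. First I would let $t\mapsto\psi_t$ be an arbitrary Fubini-Study segment in the image of $\FS_k$ joining $\phi_0$ and $\phi_1$, say $\psi_t = \FS_k(\norm_t)$ where $t\mapsto\norm_t$ is the norm geodesic between two norms $\norm_0,\norm_1$ on $H^0(kL)$ with $\FS_k(\norm_0)=\phi_0$ and $\FS_k(\norm_1)=\phi_1$. (Here I should be slightly careful about what "Fubini-Study segment in the image of $\FS_k$" means: it is the image under $\FS_k$ of a norm geodesic, and the endpoints of that geodesic are \emph{some} preimages of $\phi_0$ and $\phi_1$, not necessarily the minimal ones.) Write $\tilde\phi_t = \FS_k(\tilde\norm_t)$, where $\tilde\norm_t$ is the norm geodesic joining $\tilde\norm_0 := \N_k(\phi_0)$ and $\tilde\norm_1 := \N_k(\phi_1)$.

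The key observation is the comparison between the endpoint norms. By Lemma \ref{comparisonfdenvelope}(3) applied to the Fubini-Study metric $\phi_i = \FS_k(\norm_i)$ — written in a basis diagonalizing $\norm_i$, so that $\phi_i = \max_j(\log|s_j^{(i)}| - \log\norm_i(s_j^{(i)}))$ — we get $\N_k(\phi_i) \leq \norm_i$, i.e. $\tilde\norm_i \leq \norm_i$ for $i=0,1$. Now Proposition \ref{fdmaxprinciple} (monotonicity of norm geodesics with respect to endpoints) gives $\tilde\norm_t \leq \norm_t$ for all $t\in[0,1]$. Finally, the Fubini-Study operator $\FS_k$ is order-reversing: since $\norm\mapsto \sup_s \frac{|s|}{\norm(s)}$ is decreasing in $\norm$, a smaller norm yields a larger Fubini-Study metric, so $\tilde\norm_t \leq \norm_t$ implies $\FS_k(\tilde\norm_t) \geq \FS_k(\norm_t)$, that is $\tilde\phi_t \geq \psi_t$. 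Since $\psi_t$ was an arbitrary Fubini-Study segment in the image of $\FS_k$ joining $\phi_0$ and $\phi_1$, and the statement calls such a segment $\phi_t$, this is exactly $\phi_t \leq \tilde\phi_t$.

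The one point that requires a little care — and is probably the main obstacle to a clean writeup rather than a deep difficulty — is the precise application of Lemma \ref{comparisonfdenvelope}(3). That lemma requires $\phi_i$ to be \emph{presented} as $\max_j \log|s_j| + \lambda_j$ via a basepoint-free basis of sections, with $\norm_i(s_j) = e^{-\lambda_j}$; one must check that any $\norm_i$ with $\FS_k(\norm_i) = \phi_i$ can be put in this form after diagonalization (invoking Lemma \ref{fubinistudysubspace} / equation \eqref{fubinistudy} so that a diagonalizing basis of $\norm_i$ indeed realizes $\phi_i$ as such a max). Once the endpoint inequality $\N_k(\phi_i)\leq\norm_i$ is in hand, the rest is immediate from the two cited propositions and the monotonicity of $\FS_k$. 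I would also remark that $\N_k$ and $\FS_k$ being order-preserving, resp. order-reversing, is elementary from their definitions and can be stated in one line.
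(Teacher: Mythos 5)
Your proof is correct and takes essentially the same route as the paper: both reduce to the endpoint inequalities $\N_k(\phi_i)\leq\norm_i$ via Lemma \ref{comparisonfdenvelope}, then invoke the monotonicity of norm geodesics (Proposition \ref{fdmaxprinciple}) and the order-reversing property of $\FS_k$. Your writeup is if anything slightly more explicit than the paper's, which leaves the order-reversal of $\FS_k$ implicit.
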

\begin{proof}
Note that $\FS_k(\N_k(\phi_{i}))=\phi_{i}$ for $i=0,1$ by Lemma \ref{comparisonfdenvelope}. Now, by Lemma \ref{fubinistudysubspace}, we can write
$$\phi_0=\max_{i}\log|s_i| + \lambda_i$$
and
$$\phi_1=\max_{i}\log|t_i| + \lambda'_j$$
where $(s_i)$ and $(t_i)$ are basepoint-free bases of $H^0(kL)$.
By monotonicity of norm geodesics in the form of Proposition \ref{fdmaxprinciple}, it is enough to show that
$$\N_k(\phi_{0,1})\leq \norm_{0,1},$$
but by Lemma \ref{comparisonfdenvelope}, we have
$$\N_k(\phi_0)=\N_k(\FS_k(\norm_0))\leq \norm_0,$$
and similarly for $\norm_1$, which proves the result.
\end{proof}

\subsection{Maximal psh segments.}

We conclude this section by stating a central Theorem in this article:
\begin{theorem}\label{maxprinciple}
Let $\phi_0$, $\phi_1$ be any two psh metrics on $L$. Then,
\begin{itemize}
\item either there exists no psh segment between $\phi_0$ and $\phi_1$,
\item or there exists a unique maximal psh segment $t\mapsto \phi_t$ between $\phi_0$ and $\phi_1$.
\end{itemize}
\end{theorem}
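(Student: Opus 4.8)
The plan is to construct the candidate maximal segment as the upper envelope of all competing psh segments and then upgrade it to an honest psh segment using the continuity-of-envelopes hypothesis in the form of Lemma \ref{continuityofenvelopes}. So, assuming the second alternative fails is impossible, fix at least one psh segment $t\mapsto\psi_t$ with $\psi_0\leq\phi_0$, $\psi_1\leq\phi_1$, and consider the family $\mathcal{S}$ of all psh segments $t\mapsto\chi_t$ with endpoints dominated by $\phi_0$ and $\phi_1$. Define $\Phi_t=\sup_{\chi\in\mathcal{S}}\chi_t$ pointwise. The first task is to check that $\Phi$ is bounded above: each $\chi_t$ is, by convexity in $t$, bounded by $\max((1-t)\chi_0+t\chi_1,\dots)$, and since $\chi_0\leq\phi_0$, $\chi_1\leq\phi_1$, log-convexity gives $\chi_t\leq (1-t)\phi_0+t\phi_1$ pointwise (one has to be slightly careful to argue this at divisorial points, where everything is determined, using that the linear interpolation of the endpoints is a psh metric on $(1-t)L\oplus tL$-type bundle — more precisely that $(1-t)\phi_0+t\phi_1\in\PSH(L)$ by Proposition \ref{prop_propertiesofpsh}(3)). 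Hence $\Phi_t$ is a well-defined, bounded-above function for each $t$.

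Next I would show $\Phi$, after upper-semicontinuous regularization in the appropriate sense, is again a psh segment. Viewing a psh segment as an $\mathbb{S}^1$-invariant psh metric on $p_U^*L$ over $X\times U$ (the analogue of the complex picture described in the introduction), the family $\mathcal{S}$ is a family of psh metrics on this total space, all bounded above by the fixed psh metric coming from $(1-t)\phi_0+t\phi_1$. By Lemma \ref{continuityofenvelopes} (continuity of envelopes for the relevant polarized variety, which one should note also applies to $X\times\mathbb{P}^1$ or the relevant model — this is a point to be careful about, but envelopes on products reduce to envelopes on the factors in the standard way), the usc regularization $\mathrm{usc}(\sup_{\chi\in\mathcal{S}}\chi)$ is psh on the total space. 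One then checks it is $\mathbb{S}^1$-invariant (clear, since each member is) and that it is genuinely a segment, i.e. convex in $t$ — this follows because the sup of convex functions is convex and usc regularization preserves convexity here. Call the result $t\mapsto\phi_t$.

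Then I must verify the boundary behaviour: $\phi_0\leq\phi_0^{\mathrm{given}}$ and $\phi_1\leq\phi_1^{\mathrm{given}}$, i.e. that regularization did not push the endpoints up past $\phi_0,\phi_1$. This uses that $\phi_0,\phi_1$ are themselves psh, so the constant-in-$t$ segments are candidates to compare against near the endpoints, combined with the bound $\phi_t\leq(1-t)\phi_0+t\phi_1$ established above, which forces $\limsup_{t\to 0}\phi_t\leq\phi_0$ and similarly at $t=1$; since $\phi_t\geq\psi_t$ for the fixed competitor $\psi$, the segment is non-degenerate and its endpoint limits are sandwiched. By construction $\phi_t\geq\chi_t$ for every $\chi\in\mathcal{S}$, so $\phi_t$ is maximal, and uniqueness is immediate since any two maximal segments dominate each other.

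The main obstacle I anticipate is the regularization step and its interaction with the boundary: passing from the pointwise supremum to a bona fide psh segment requires applying continuity of envelopes on the product/model space rather than on $(X,L)$ itself, and one must make sure (i) that this is legitimate — that continuity of envelopes for $(X,L)$ propagates to the space parametrizing segments — and (ii) that the usc regularization, which a priori only acts on divisorial points of the total space, does not destroy the endpoint inequalities. The cleanest route is probably to recall from Section \ref{sect_pshsegments} that psh segments are stable under (decreasing, but here we need increasing/sup) limits only after regularization, so the whole argument is genuinely the segment-level analogue of Lemma \ref{continuityofenvelopes}, and I would phrase it that way, citing \cite[Corollary 5.15]{bjsemi} to control behaviour on divisorial points. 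Once $\Phi$ is known to regularize to a psh segment with the correct endpoints, maximality and uniqueness are formal.
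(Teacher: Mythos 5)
Your approach is a direct Perron-envelope construction on a putative total space $X\times U$, invoking continuity of envelopes to upgrade the pointwise supremum $\sup_{\chi\in\mathcal{S}}\chi$ to a psh segment after usc regularization. This is genuinely different from what the paper does, and the gap you flag as a ``main obstacle'' is in fact fatal as stated. The non-Archimedean theory in this paper never sets up psh segments as $\mathbb{S}^1$-invariant psh metrics on a product $X\times U$ — the complex picture of a segment as a metric on $p_U^*L$ is only motivation in the introduction, not a framework available here. A psh segment is defined intrinsically as a decreasing limit of Fubini-Study segments, i.e.\ as a map $[0,1]\to\PSH(L)$, and Lemma~\ref{continuityofenvelopes} applies to families of psh metrics on $(X,L)$, not to a family of ``psh metrics on a product.'' To make your argument work you would need (i) a non-Archimedean model for $X\times U$, (ii) an identification of psh segments in the paper's sense with psh metrics on that model, and (iii) continuity of envelopes on that model — none of which is established, and (ii) is essentially the content of the Kiselman minimum principle Lemma~\ref{kiselman}, which lets one pass between a segment and its slice-wise Legendre transforms. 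Additionally, your claim that ``usc regularization preserves convexity here'' cannot even be formulated cleanly when the segment is a map $t\mapsto\PSH(L)$ rather than a single metric on a product; regularization in $x$ for each fixed $t$ and regularization in $(t,x)$ jointly are not the same operation.

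The paper avoids all of this by factoring through the continuous case. Theorem~\ref{maxprinciple_continuous}(1) constructs the maximal segment for \emph{continuous} endpoints via Legendre duality: $\phi_t=\sup_\tau\bigl(t\tau+P(\phi_0,\phi_1-\tau)\bigr)$, where the envelopes $P(\phi_0,\phi_1-\tau)$ are handled by continuity of envelopes on $(X,L)$ itself, and psh-ness of the resulting segment comes from the Kiselman minimum principle applied to each competitor. For general $\phi_0,\phi_1\in\PSH(L)$, the paper then approximates by decreasing nets $\phi_i^k\in C^0(L)\cap\PSH(L)$, notes that the maximal segments $\phi_t^k$ between them form a decreasing net (by the monotonicity lemma), and identifies the limit with $\sup\{\varphi_t : \varphi\text{ psh segment between }\phi_0,\phi_1\}$. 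So the actual work is pushed into Lemmas~\ref{kiselman}, \ref{theoremcontinuitypsh} and \ref{perron}; Theorem~\ref{maxprinciple} itself is then a limiting argument. Your proposal, by contrast, tries to run the Perron envelope in one step at the level of general psh metrics, which requires total-space machinery the paper never builds. The correct bound $\chi_t\leq(1-t)\phi_0+t\phi_1$ by convexity in $t$, and the uniqueness-by-mutual-domination closing remark, are both fine, but the central regularization step is not justified.
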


\noindent We will prove this result in Section \ref{sect_proofthm}. In what follows, we will state and prove versions of this result in larger and larger classes of metrics, starting from the continuous psh case, then finite-energy metrics, and finally general psh metrics. In each case, we show that the maximal segment remains in the same class as the endpoints, for all $t$.

\section{Non-Archimedean geodesics in the space of continuous psh metrics.}\label{sectnageodesicscontinuous}

Throughout this section, we assume that $L$ is an ample line bundle over a projective $\field$-variety $X$, $\field$ non-Archimedean, and that continuity of envelopes holds for $(X,L)$.

\subsection{Main Theorem for continuous psh metrics.}\label{sectiondistancecontinuous}

We start by studying maximal psh segments in the space of continuous psh metrics. This space can be endowed with a metric structure as follows:
\begin{defi}
Consider two metrics $\phi_0$, $\phi_1\in C^0(L)\cap\PSH(L)$. We define
$$d_1(\phi_0,\phi_1)=d_1(\N_\bullet(\phi_0),\N_\bullet(\phi_1)),$$
where the distance in the right-hand side is the distance $d_1$ on bounded graded norms from Section \ref{sectnorms}.
\end{defi}

\begin{remark}
Recall that we defined before
$$\vol(\phi_0,\phi_1)=\vol(\N_\bullet(\phi_0),\N_\bullet(\phi_1)).$$
It follows (see e.g. \cite[Remark 5.4.5]{reb}) that we have the formula
$$d_1(\phi_0,\phi_1)=\vol(\phi_0,P(\phi_0,\phi_1))+\vol(\phi_1,P(\phi_0,\phi_1)).$$
By \cite[Theorem 9.15]{boueri}, this is also equal to
$$d_1(\phi_0,\phi_1)=E(\phi_0,P(\phi_0,\phi_1))+E(\phi_1,P(\phi_0,\phi_1)).$$
This distance is sometimes called the Darvas distance, as it was introduced in \cite{darmabuchigeometry} in the complex case. We will see in Section \ref{sectquantization} that, as in \cite{darmabuchigeometry}, it extends as a distance on the space of finite-energy metrics.
\end{remark}

\begin{prop}
\label{prop_distanceoncontinuous}The $d_1$ distance defined above is indeed a distance on the set of continuous psh metrics.
\end{prop}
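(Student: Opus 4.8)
The plan is to deduce the four distance axioms for $d_1$ on continuous psh metrics directly from the corresponding properties already established for the asymptotic $d_1$ distance on bounded graded norms in Section \ref{sectnorms}, transported along the operator $\N_\bullet$. Concretely, I would proceed as follows.

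\textbf{Symmetry and non-negativity.} These are immediate: $d_1(\phi_0,\phi_1)=d_1(\N_\bullet(\phi_0),\N_\bullet(\phi_1))$ by definition, and the asymptotic $d_1$ on $\grnormspace(R)$ is symmetric and non-negative (being an absolute first moment of a spectral measure), so both pass through verbatim.

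\textbf{Triangle inequality.} Here I would again simply invoke the fact that $d_1$ on bounded graded norms satisfies the triangle inequality — this is the content of the cited results of \cite{boueri} (the finite-dimensional triangle inequality from \cite[3.1]{boueri}, passed to the asymptotic limit). Since $\phi\mapsto\N_\bullet(\phi)$ produces bounded graded norms (as recalled via \cite[Example 9.2]{boueri}), we get $d_1(\phi_0,\phi_2)=d_1(\N_\bullet\phi_0,\N_\bullet\phi_2)\le d_1(\N_\bullet\phi_0,\N_\bullet\phi_1)+d_1(\N_\bullet\phi_1,\N_\bullet\phi_2)=d_1(\phi_0,\phi_1)+d_1(\phi_1,\phi_2)$.

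\textbf{Separation (the one real point).} The only axiom requiring an argument is that $d_1(\phi_0,\phi_1)=0$ implies $\phi_0=\phi_1$ — equivalently that $\N_\bullet$ separates continuous psh metrics up to the asymptotic equivalence $\sim$, i.e. that it is injective on $C^0(L)\cap\PSH(L)$ \emph{into} $\grnormspace(R)/\sim$. The strategy is: if $d_1(\N_\bullet\phi_0,\N_\bullet\phi_1)=0$ then $\N_\bullet(\phi_0)\sim\N_\bullet(\phi_1)$; apply the asymptotic Fubini-Study operator and use Theorem \ref{theo_reb}, which gives $\FS_\bullet\circ\N_\bullet=\mathrm{id}$, to conclude $\phi_0=\FS_\bullet(\N_\bullet\phi_0)=\FS_\bullet(\N_\bullet\phi_1)=\phi_1$. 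This is where the standing hypothesis of continuity of envelopes is used (it is what makes $\FS_\bullet$ well-defined and the quantization identity of \cite{reb} available). I should be a little careful that $\FS_\bullet\circ\N_\bullet$ genuinely recovers a \emph{continuous} psh metric on the nose rather than some regularized limit — but this is precisely the statement of Theorem \ref{theo_reb} combined with Theorem \ref{theo_reb}'s companion result identifying $\FS_\bullet\N_\bullet\phi=\phi$ for continuous psh $\phi$ (cf. the discussion preceding Theorem \ref{theo_reb} and \cite[Lemma 7.24]{boueri} in the asymptotic regime).

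I expect the main (indeed only) obstacle to be this separation step, and more precisely citing the right form of the quantization theorem: one needs not just injectivity of $\FS_\bullet$ modulo $\sim$ but the fact that the composite $\FS_\bullet\circ\N_\bullet$ is the identity on continuous psh metrics, which relies on continuity of envelopes holding for $(X,L)$ — exactly the blanket assumption of this section. Everything else is a formal transport of already-cited metric-space structure on $\grnormspace(R)/\sim$ through the map $\N_\bullet$. I would write the proof in three short paragraphs accordingly: (i) reduce all axioms to properties of $d_1$ on $\grnormspace(R)$ via the definition; (ii) dispatch symmetry, positivity, and triangle inequality by citation; (iii) prove separation using Theorem \ref{theo_reb}.

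\begin{proof}
By definition $d_1(\phi_0,\phi_1)=d_1(\N_\bullet(\phi_0),\N_\bullet(\phi_1))$, and by \cite[Example 9.2]{boueri} the operator $\N_\bullet$ sends a continuous psh metric to a bounded graded norm. Hence symmetry, non-negativity, and the triangle inequality for $d_1$ on $C^0(L)\cap\PSH(L)$ follow immediately from the corresponding properties of the asymptotic $d_1$-distance on $\grnormspace(R)$, which were established in Section \ref{sectnorms} (the triangle inequality being the non-trivial one, for which we refer to \cite[3.1]{boueri} and its asymptotic counterpart).

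It remains to prove that $d_1(\phi_0,\phi_1)=0$ forces $\phi_0=\phi_1$. If $d_1(\N_\bullet(\phi_0),\N_\bullet(\phi_1))=0$, then $\N_\bullet(\phi_0)$ and $\N_\bullet(\phi_1)$ coincide in $\grnormspace(R)/\sim$, i.e. they are asymptotically equivalent. Applying the asymptotic Fubini-Study operator $\FS_\bullet$, which is well-defined on $\grnormspace(R)/\sim$ since continuity of envelopes holds for $(X,L)$, and using Theorem \ref{theo_reb} (which gives $\FS_\bullet\circ\N_\bullet=\mathrm{id}$ on continuous psh metrics), we obtain
$$\phi_0=\FS_\bullet(\N_\bullet(\phi_0))=\FS_\bullet(\N_\bullet(\phi_1))=\phi_1.$$
This completes the proof.
\end{proof}
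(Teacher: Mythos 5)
Your proof is correct and takes essentially the same approach as the paper's: reduce the distance axioms to the corresponding properties of the asymptotic $d_1$ on $\grnormspace(R)$ via the definition $d_1(\phi_0,\phi_1)=d_1(\N_\bullet\phi_0,\N_\bullet\phi_1)$, and settle separation by observing that $d_1=0$ forces $\N_\bullet(\phi_0)\sim\N_\bullet(\phi_1)$ and then applying $\FS_\bullet$ together with the identity $\FS_\bullet\circ\N_\bullet=\mathrm{id}$ from Theorem \ref{theo_reb}. You are also right to flag that the quantization identity (and hence the well-definedness of $\FS_\bullet$) is where the standing assumption of continuity of envelopes enters, which is exactly the role it plays in the paper's argument.
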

\begin{proof}
Symmetry is immediate. The triangle inequality follows from taking the limit in the finite-dimensional triangle inequalities
$$k\mi d_1(\norm_k,\norm_k')\leq k\mi d_1(\norm_k,\norm_k'')+k\mi d_1(\norm_k'',\norm_k')$$
for any three bounded graded norms $\grnorm,\grnorm',\grnorm''\in\grnormspace(R)$. If $d_1(\phi,\phi')=0$, then $N_\bullet(\phi)$ and $\N_\bullet(\phi')$ belong by definition to the same equivalence class of bounded graded norms. Since $\FS_\bullet\circ\N_\bullet$ is the identity on continuous psh metrics and $\FS_\bullet$ factors through asymptotic equivalence, it follows that $\phi=\phi'$. Finally, if $\phi=\phi'$, then their distance is naturally zero.
\end{proof}

\noindent The main Theorem of this section is then the following:
\begin{theorem}\label{maxprinciple_continuous}
Let $\phi_0,\phi_1$ be two continuous psh metrics on $L$. Then, 
\begin{enumerate}
\item there exists a (unique) maximal psh segment $(t,x)\mapsto \phi_t(x)$ joining $\phi_0$ and $\phi_1$;
\item this segment is continuous in both variables;
\item this segment is a geodesic segment for the distance $d_1$;
\item the Monge-Ampère energy is affine along this segment, and it is the unique psh segment joining $\phi_0$ and $\phi_1$ with this property.
\end{enumerate}
\end{theorem}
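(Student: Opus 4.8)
The plan is to establish the four assertions in sequence, using the quantization picture as the main engine. First I would construct the candidate maximal segment. Set $\phi_t := \mathrm{usc}\,\sup\{\psi_t : (\psi_s)_s \text{ a psh segment with } \psi_0 \leq \phi_0,\ \psi_1 \leq \phi_1\}$. The family is nonempty (e.g.\ the linear interpolation of Fubini--Study metrics below $\phi_0,\phi_1$, or more simply a constant below both). Using the Legendre-duality/variational trick \`a la \cite{rwnanalytic}: think of a psh segment as an $\bbs^1$-invariant psh metric on $X\times U$, write it via its Legendre transform in the $t$-variable, and observe that the Legendre transform of the sup is the inf of the Legendre transforms. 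Since continuity of envelopes holds for $(X,L)$, Lemma \ref{continuityofenvelopes} guarantees that usc regularizations of suprema of psh objects remain psh; applying this on $X\times U$ (or degree by degree on divisorial points) shows $\phi_t$ is itself a psh segment, hence the unique maximal one. This handles (1).

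For (2), I would run the quantization argument. For each $k$ form the sup-norms $\N_k(\phi_0), \N_k(\phi_1)$ on $H^0(kL)$, let $\norm_k^t$ be the norm geodesic joining them (Section \ref{sectnorms}), and set $\phi_t^k := \FS_k(\norm_k^t)$, a Fubini--Study segment. By Theorem \ref{submultgeodesics} the collection $(\norm^t_k)_k$ is submultiplicative (a bounded graded norm), so by Fekete plus continuity of envelopes $\phi_t := \lim_k \phi_t^k$ exists as a psh metric for each $t$; one checks this limit agrees with the maximal segment from (1) by a two-sided comparison (Lemma \ref{fdcomparisonprinciple} gives that each $\phi_t^k$ lies below the maximal segment after passing to the limit via \ref{comparisonfdenvelope}, while conversely any competitor segment is dominated by $\FS_k$ of its own quantized norm geodesic by \ref{fdmaxprinciple}, and one takes sup and limit). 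Continuity in $t$ for fixed $x$ follows from log-convexity of norm geodesics (Lemma \ref{logconvexityofgeodesics}) which makes each $\phi_t^k(x)$ convex in $t$, so $\phi_t(x)$ is convex hence continuous on $(0,1)$, and one controls the endpoints using $\FS_k(\N_k(\phi_i)) \to \phi_i$ (Theorem \ref{theo_reb} combined with continuity of envelopes, i.e.\ the $\FS_\bullet \circ \N_\bullet = \mathrm{id}$ statement). Joint continuity in $(t,x)$ then comes from a Dini-type argument: the $\phi_t^k$ are continuous, decrease (after suitable normalization / using boundedness of the graded norm) to a continuous limit, so the convergence is locally uniform.

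For (3), I would use Corollary \ref{metricconvexityd1} (metric convexity of norm geodesics) at finite level: $d_1(\norm_t^k, \norm_s^k) = |t-s|\, d_1(\norm_0^k, \norm_1^k)$ by construction of norm geodesics, and dividing by $k h^0(kL)$ and letting $k \to \infty$ gives $d_1(\phi_t, \phi_s) = |t-s|\, d_1(\phi_0,\phi_1)$ by definition of the $d_1$-distance on continuous psh metrics as $d_1(\N_\bullet(\cdot),\N_\bullet(\cdot))$ — provided one first checks $\N_\bullet(\phi_t)$ is asymptotically equivalent to $(\norm_t^k)_k$, which follows from $\FS_\bullet\circ\N_\bullet = \mathrm{id}$ and $\FS_\bullet((\norm^t_k)_k) = \phi_t$ together with injectivity of $\FS_\bullet$ modulo asymptotic equivalence (Theorem \ref{theo_reb}). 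For (4), affineness of $E$ along $\phi_t$: by Theorem \ref{theo_be}, $E(\phi_t,\phi_s) = \vol(\phi_t,\phi_s) = \vol(\N_\bullet(\phi_t),\N_\bullet(\phi_s)) = \lim_k (kh^0(kL))^{-1}\vol(\norm_t^k,\norm_s^k)$, and the Corollary on affineness of relative volume of norm geodesics (just after Lemma \ref{detofgeodesicisgeodesic}) gives that $t \mapsto \vol(\norm_t^k, \norm_0^k)$ is affine for each $k$, hence so is the limit. Uniqueness of the affine-energy segment: if $t\mapsto \psi_t$ is any psh segment joining $\phi_0,\phi_1$ with $E$ affine, then $E(\psi_t) = (1-t)E(\phi_0) + tE(\phi_1) = E(\phi_t)$; since $\psi_t \leq \phi_t$ by maximality and $E$ is strictly monotone in the appropriate sense on continuous (indeed finite-energy) metrics (the equality case of monotonicity of $E$, cf.\ \cite{boueri}), $\psi_t = \phi_t$ for all $t$.

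The main obstacle I expect is step (1)/(2): showing that the supremum of all psh subsegments is again a psh segment and that it coincides with the quantized limit. The Legendre-duality argument needs care to interact correctly with the $\bbs^1$-invariant psh metric formalism on $X\times U$ and with the usc regularization; and the two-sided comparison identifying $\sup$-segment with $\lim_k \FS_k(\norm^t_k)$ requires the quantization inputs (Theorems \ref{theo_reb}, \ref{theo_be}, and the monotonicity Lemmas \ref{fdmaxprinciple}, \ref{fdcomparisonprinciple}) to be assembled so that the inequality goes both ways in the limit. Once that identification is in hand, (3) and (4) are essentially formal consequences of the already-established good behaviour of norm geodesics under $d_1$, determinants, and the energy-volume comparison.
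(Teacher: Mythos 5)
Your overall plan — Legendre duality for existence/continuity plus quantization via $\N_k$ and norm geodesics for the metric and energy properties — is the paper's structure, and your treatment of (3) and (4) tracks the paper closely: the identification $\N_\bullet(\phi_t)\sim(\norm^t_k)_k$ via injectivity of $\FS_\bullet$ and $\FS_\bullet\circ\N_\bullet=\mathrm{id}$, the affineness of the relative volume along norm geodesics, and Theorem \ref{theo_be} are exactly the inputs used. But your proof of (1) has a genuine gap, and (2) inherits it.

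The problem is the key technical step of showing that the fibrewise Legendre transform $\tau\mapsto\hat\psi_\tau=\inf_{t}(\psi_t-t\tau)$ of a psh segment $\psi_t$ is itself psh. This is an \emph{infimum} of psh metrics, and infima of psh things are not psh in general; the paper devotes a dedicated subsection to prove a non-Archimedean Kiselman minimum principle (Lemma \ref{kiselman}) for exactly this purpose, reducing to Fubini--Study segments and running a tropical/piecewise-linear convexity argument. You instead appeal to Lemma \ref{continuityofenvelopes} (continuity of envelopes implies usc \emph{suprema} of psh families are psh) ``on $X\times U$''; this does not address the infimum, and more fundamentally, in this non-Archimedean paper there is no $\bbs^1$-invariant-psh-on-$X\times U$ formalism at all — that picture is only a heuristic from the complex case. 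Here psh segments are defined directly as decreasing limits of Fubini--Study segments, so the Legendre-transform step must be handled intrinsically, which is precisely what Kiselman's principle supplies. Without it, you cannot conclude in Lemma \ref{perron}-style that $\hat\psi_\tau\le P(\phi_0,\phi_1-\tau)$, and the maximality argument breaks. (Relatedly: continuity of the maximal segment at $t=0,1$ is not automatic from convexity or from $\FS_k\circ\N_k\to\mathrm{id}$; the paper proves it by explicit two-sided linear bounds $|\phi_t-\phi_0|\le Ct$ coming from the envelope formula $\hat\phi_\tau=P(\phi_0,\phi_1-\tau)$, so you need that explicit form.) Finally, for uniqueness in (4) you cite \cite{boueri} for strict monotonicity of the energy in the equality case; the paper actually has to prove this separately (Proposition \ref{comparisonenergy}) using a quantitative domination inequality from \cite{bjsemi} and solvability of the non-Archimedean Monge--Ampère equation to produce measures with atoms at divisorial points — it is not a black-box fact available off the shelf.
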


\subsection{A non-Archimedean Kiselman minimum principle.}

In this section, we prove an auxiliary result, of independent interest, that will help us prove the first two points of Theorem \ref{maxprinciple_continuous}.

\bsni Given a convex function $f:\bbr^p\times\bbr^q\to\bbr\cup\{\infty\}$, it is well-known that the infimum of the marginals
$$\bbr^q\ni y\mapsto \inf_{x\in\bbr^p}f(x,y)$$
is also convex. This generalizes in multiple ways, as Prekopa's theorem (\cite{prekopa}), but also to plurisubharmonic functions (independent of the imaginary part of the variable over which the infimum is taken). This is the well-known Kiselman minimum principle (\cite{kisel1}, \cite{kisel2}), and a crucial tool in the study of plurisubharmonic functions. We propose here a non-Archimedean version of this result.

\begin{lemma}[Non-Archimedean Kiselman minimum principle]\label{kiselman}
Let $[0,1]\ni t\mapsto \phi_t$ be a psh segment in $\PSH(L)$. Then, for each $\tau\in\bbr$, the Legendre transform
$$\hat\phi_\tau:x\mapsto \inf_{t\in[0,1]} \phi_t(x) - t\tau$$
is in $\PSH(L)$. Furthermore, by Legendre duality,
$$\phi_t=\sup_{\tau\in\bbr}\hat\phi_\tau+t\tau.$$
\end{lemma}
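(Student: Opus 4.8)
The plan is to reduce the statement to its finite-dimensional (Fubini–Study) counterpart, where the Legendre transform can be computed explicitly, and then pass to the limit using stability of $\PSH(L)$ under decreasing nets. First I would observe that both assertions are purely formal consequences of convexity once we know $\hat\phi_\tau\in\PSH(L)$: indeed, for a fixed $x\in X\an$ at which $t\mapsto\phi_t(x)$ is a (finite, real-valued) convex function on $[0,1]$, biduality for the Legendre transform on the interval gives $\phi_t(x)=\sup_{\tau\in\bbr}\bigl(\hat\phi_\tau(x)+t\tau\bigr)$ immediately. The only subtlety here is handling points $x$ where $\phi_t(x)$ may equal $-\infty$ for some or all $t$; but by the definition of a psh segment, if $\phi_0(x)$ and $\phi_1(x)$ are both finite then convexity forces $\phi_t(x)$ finite for all $t\in[0,1]$, and the set of such $x$ is $\PSH$-determining (it contains the locus where the psh metrics $\phi_0,\phi_1$ are non-singular, which as in the proof of the previous Proposition suffices on divisorial points). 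So the biduality claim follows from the first claim together with this pointwise bookkeeping.

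The heart of the matter is therefore to show $\hat\phi_\tau\in\PSH(L)$. Here I would first treat the case where $t\mapsto\phi_t$ is a Fubini–Study segment, say
$$\phi_t=k\mi\max_i\bigl(\log|s_i|+(1-t)\lambda_i+t\lambda_i'\bigr).$$
Writing $\mu_i(t)=(1-t)\lambda_i+t\lambda_i'$, each $\mu_i$ is affine in $t$, so $k\phi_t(x)=\max_i\bigl(\log|s_i(x)|+\mu_i(t)\bigr)$ is, for fixed $x$, a maximum of affine functions of $t$. Its Legendre transform in $t$ over $[0,1]$ can be computed: $\inf_{t\in[0,1]}\bigl(k\phi_t(x)-kt\tau\bigr)$ is again an explicit expression, but the key structural point is that the infimum over $t$ of a $\max_i$ of affine-in-$t$ functions is, after the standard min-max juggling, expressible as a $\max$ over subsets/convex-combination data of terms each of which is of the form $\log$ of a monomial in the $|s_i|$ plus a constant — i.e. a Fubini–Study-type metric (possibly after passing to a power of $L$ and to products of the $s_i$). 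More carefully: on the region of $x$-space where the index realizing the max is locally constant the infimum in $t$ is attained at an endpoint $t=0$ or $t=1$, giving $\min(\phi_0-0,\phi_1-\tau)$ locally, and globally $\hat\phi_\tau$ is then the $\PSH$-envelope $P\bigl(\min(\phi_0,\phi_1-\tau)\bigr)$-type object; but rather than invoke envelopes (which would need continuity of envelopes), I would keep the explicit monomial description, which manifestly lies in $\cH(L)$ — and in any case $\hat\phi_\tau$ is a decreasing limit of such. Then for a general psh segment $\phi_t=\lim_\alpha\phi_{t,\alpha}$ (decreasing net of Fubini–Study segments), I note that the operation $\phi_\bullet\mapsto\hat\phi_\tau=\inf_t(\phi_t-t\tau)$ commutes with decreasing limits of nets: $\widehat{(\lim_\alpha\phi_{\bullet,\alpha})}_\tau=\lim_\alpha(\widehat{\phi_{\bullet,\alpha}})_\tau$, the interchange of $\inf_\alpha$ and $\inf_t$ being trivial. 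Hence $\hat\phi_\tau$ is a decreasing limit of the Fubini–Study-case Legendre transforms, each of which is psh (or a decreasing limit of such), so $\hat\phi_\tau\in\PSH(L)$ by Proposition~\ref{prop_propertiesofpsh}(2c) — provided it is not identically $-\infty$, which again follows from finiteness of $\phi_t(x)$ on the $\PSH$-determining locus described above.

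The main obstacle I anticipate is precisely the explicit computation of the Legendre transform in the Fubini–Study case and checking that it genuinely stays inside the class $\cH(L)$ (up to decreasing limits and passing to powers of $L$) without secretly needing the envelope operator $P(\cdot)$ — the min-max exchange is the delicate combinatorial step, and one must be careful that the "convex combination" data appearing when one dualizes $\max_i\bigl(\log|s_i|+\mu_i(t)\bigr)$ corresponds to honest integral powers (sections of $mkL$ for suitable $m$) rather than fractional ones. A clean way around this, which I would adopt if the direct computation gets unwieldy, is to note it suffices to produce, for each $\tau$, a decreasing net of Fubini–Study segments $t\mapsto\psi_{t,\alpha}$ with $\psi_{0,\alpha}\to\phi_0$, $\psi_{1,\alpha}\searrow$ something $\geq\phi_1-\tau\cdot\mathbf{1}$-ish... — but cleanest is simply: $\hat\phi_\tau(x)=\min\bigl(\phi_0(x),\,\inf_{t\in(0,1]}(\phi_t(x)-t\tau)\bigr)$ and monotone/convexity arguments show $\hat\phi_\tau$ is the decreasing limit over $n$ of $\min_{j=0,\dots,n}(\phi_{j/n}-\tfrac{j}{n}\tau)$; since finite minima are not psh-stable one cannot stop here, but each $\phi_{j/n}-\tfrac jn\tau$ is psh and their infimum's usc-regularization is psh under continuity of envelopes — however we want to avoid that hypothesis, so I will instead lean on the Fubini–Study explicit form. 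I expect the honest resolution to be the monomial computation, and that is where I would spend the real effort.
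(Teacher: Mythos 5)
Your reduction is correct and matches the paper's: pass to the Fubini--Study case by noting that $\phi_\bullet\mapsto\hat\phi_\tau$ commutes with decreasing limits of nets (the $\inf_t\inf_\alpha=\inf_\alpha\inf_t$ interchange), then handle the Fubini--Study case directly and invoke stability of $\PSH(L)$ under decreasing nets. The biduality statement is indeed a formal consequence once $\hat\phi_\tau\in\PSH(L)$ is known, though your claim that ``convexity forces $\phi_t(x)$ finite for all $t\in[0,1]$'' given finiteness at the endpoints is not quite true for abstract convex functions (one can have $\phi_t(x)=-\infty$ on $(0,1)$ with finite endpoints), and you would need to quote a property of psh segments to rule this out on a determining set.

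The genuine gap is that you identify, but do not carry out, the crux: proving that the Legendre transform of a Fubini--Study segment lies in $\PSH(L)$. Your ``min-max juggling'' is a plan, not an argument, and your one attempt to shortcut it is incorrect: on a region where the maximizing index $i$ is locally constant \emph{in $x$}, the function $t\mapsto k\phi_t(x)-kt\tau$ is still a genuine max over $i$ of affine functions of $t$, so the infimum over $t$ need not be attained at an endpoint $t\in\{0,1\}$, and $\hat\phi_\tau$ is not $\min(\phi_0,\phi_1-\tau)$ or its envelope (indeed invoking $P(\cdot)$ would reintroduce the continuity-of-envelopes hypothesis you rightly want to avoid). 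The paper closes this gap in two steps you would need to supply. First, it writes $\hat\phi_\tau=g\circ\mathrm{trop}$ where $g(s)=\inf_{t\in[0,1]}k^{-1}\max_i\bigl(s_i+t(\lambda_i-\tau)+c_i\bigr)$, and proves $g$ is piecewise-linear and convex via an epigraph-projection argument: the strict epigraph of $g$ is the image under the linear projection forgetting $t$ of the (convex, PL) epigraph of the joint function $f(t,s)$, and both properties are preserved under linear images. Second, having checked that $g$ is increasing in each variable and satisfies $g(s+C\mathbf{1})=g(s)+k^{-1}C$, it appeals to Remark~\ref{rem_fsmetric}, which shows any such convex PL $g$ composed with the tropicalization is a Fubini--Study metric when the slopes are rational (the ``monomial'' $\prod_i s_i^{\alpha_i}$ is a genuine section of $(\prod q_i)L$) and is psh in general by uniform approximation of irrational slopes by rational ones. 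Your proposal correctly anticipates both the convex-combination structure and the integrality issue, but stops short of the epigraph argument and the rational-approximation device that actually resolve them.
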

\begin{proof}Since a psh segment on $L$ is a global decreasing limit of Fubini-Study segments, and one notices the map between segments of psh metrics
$$(t\mapsto \phi_t)\mapsto(\tau\mapsto\hat\phi_\tau)$$
to be continuous along decreasing sequences of segments (by its definition as an infimum over $t$ of psh metrics), it is enough to consider the case where $t\mapsto\phi_t$ is a Fubini-Study segment, i.e. there exists a finite basepoint-free collection of sections $(s_i)_{i\in I}$ of $H^0(kL)$ for some $k$ such that
$$\phi_t=\max_{i\in I} (\log|s_i| + t\lambda_i + c_i),$$
with fixed constants $(\lambda_i)$ and $(c_i)$.

\bsni Set $\tau\in\bbr$, and consider the functions
$$f:[0,1]\times \bbr^{|I|}\to\bbr,\,(t,s)\mapsto k\mi\max_i s + t(\lambda_i-\tau)+c_i$$
and
$$g:\bbr^{|I|}\to\bbr,\,s\mapsto\inf_{t\in[0,1]}f(t,s).$$
It is clear that $\hat\phi_\tau$ is the composition of $g$ and the formal tropicalization map 
$$\mathrm{trop}:X\ni x\mapsto \left(\log |s_1|(x),\dots,\log \left|s_{|I|}\right|(x)\right)\in ({\bbr\cup\{-\infty\}})^{|I|}.$$
We first show that $g$ is a piecewise-linear convex map, and we will explain how from this result we can prove that $\hat\phi_\tau$ is Fubini-Study.

\bsni The strict epigraph of $g$
$$E_g=\{(s,y)\in \bbr^{|I|}\times \bbr,\,g(s)<y\}$$
is the image under the projection $p:[0,1]\times\bbr^{|I|}\times \bbr \to \bbr^{|I|}\times \bbr$ of the epigraph of $f$
$$E_f=\{(t,s,y)\in [0,1]\times\bbr^{|I|}\times \bbr,\,f(t,s)<y\}.$$
One notices that, since $f$ is piecewise-linear and convex in all variables, $E_f$ is convex and its closure is a piecewise-linear set. Since both of those properties are preserved under linear maps, $E_g$ is also convex and PL. This implies that the same holds for the function $g$. (In particular, putting aside the PL hypothesis, this is precisely the standard proof of the convex infimum principle for marginals.)

\bsni Note that our convex PL function $g$ is increasing in each variable and satisfies, for any real constant $C$,
$$g(s_1+C,\dots,s_{|I|}+C)=g(s_1,\dots,s_{|I|})+k\mi C,$$
because those properties are satisfied by $f$ in the $|I|$ last coordinates, and are preserved upon taking the infimum over the first coordinate. Composing such a function with our formal tropicalization map naturally yields a psh metric (by Remark \ref{rem_fsmetric} below), which proves our result.
\end{proof}

\begin{remark}\label{rem_fsmetric}
We have claimed that, given a basepoint-free basis of sections $\basislong{}_{\in I}$ of $H^0(kL)$ and a convex PL function $f$ of $p=|I|$ real variables, increasing in each variable, and satisfying
$$f(z_1+C,\dots,z_p+C)=f(z_1,\dots,z_p)+k\mi C,$$
for all real constants $C$, then $f(\log|s_1|,\dots,\log|s_p|)$ is a Fubini-Study metric. We will show this with $k=1$ for clarity, and all the arguments below can be adapted for general $k$ upon dividing where needed. Indeed, since $f$ is convex, PL, and satisfies the property above, there exist finitely many affine functions $f_j$ such that
$$f=\max_j f_j$$
and
$$f_j(z_1,\dots,z_p)=\sum_i \alpha_{i,j}z_i + b$$
with $\sum_i a_{i,j}=1$. Since a maximum of psh metrics is psh, it is enough to prove that $f_j(\log|s_1|,\dots,\log|s_p|)$ is psh. We will therefore drop the subscript $j$ and write $a_i$ for the coefficients above.

\bsni Now, the monotonicity condition ensures that the $a_i$ all belong to $[0,1]$, i.e. the vector $(a_{i})_i$ is in the $p$-dimensional simplex. We assume at first that $a_i=p_i/q_i\in\bbq\cap[0,1]$. Denote
$$\alpha_i=p_i \cdot q_i\mi \cdot \prod_j q_j$$
and remark that, by the simplex condition,
$$\sum_i \alpha_i = \prod_i q_i.$$
Then,
\begin{align*}
f(\log|s_1|,\dots,\log|s_p|)&=b+\sum_i a_i\log|s_i|\\
&=\frac{b\cdot \prod_i q_i}{\prod_i q_i}+\frac1{\prod_i q_i}\log\prod_i |s_i^{\alpha_i}|.
\end{align*}
Now, $\prod_i s_i^{\alpha_i}$ is a section of $(\sum_i \alpha_i)L=(\prod_i q_i)L$. Therefore, $f$ is in the image of $FS_{\prod_i q_i}$, i.e. it is a Fubini-Study metric. If some of the coefficients are irrational, then $f$ can be uniformly approximated by a function with rational coefficients satisfying all the conditions above, i.e. $f(\log|s_1|,\dots,\log|s_p|)$ can be uniformly approximated by Fubini-Study metrics, which shows that it is psh.
\end{remark}

\begin{remark}\label{remarkkiselman}
We have stated our minimum principle so as to match the form it will be used in, in the next subsection. A brief look at the proof shows that it can be generalized to the following statement: given a Fubini-Study "polyhedron" (or psh, upon taking decreasing limits) parameterized as
$$\phi_t:P \times X \ni (t,x)\mapsto \max_i \log|s_i| + \langle \alpha_i,t\rangle + b_i,$$
where $P$ is a convex polyhedral subset of $\bbr^d$ for some $d$, $\alpha_i\in\bbr^d$, $b_i\in\bbr$, and the $s_i$ are sections of some $H^0(kL)$, we have that for all $\alpha\in\bbr^d$,
$$\inf_{t\in P}\phi_t(x) - \langle \tau,t\rangle$$
is psh for all $\tau\in\bbr^d$.
\end{remark}

\subsection{Proof of Theorem \ref{maxprinciple_continuous}, (1) and (2).}

With this principle in hand, we can now prove the first two points of Theorem \ref{maxprinciple_continuous}, following ideas from e.g. \cite{dargppt} and \cite{rwnanalytic} in the complex case. Consider the envelope
$$\hat\phi_\tau=P(\phi_0,\phi_1-\tau)$$
for $\tau\in\bbr$. By continuity of envelopes, this defines a continuous psh metric. The two next lemmas essentially prove our result:
\begin{lemma}\label{theoremcontinuitypsh}
If $\phi_0,\phi_1\in C^0(L)\cap\PSH(L)$, the map $t\mapsto\phi_t$ defined as the Legendre transform
$$\phi_t=\sup_{\tau\in\bbr} (t\tau+\hat\phi_\tau)$$
is a psh segment, which is continuous on $[0,1]\times X\an$.
\end{lemma}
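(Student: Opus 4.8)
The plan is to unwind the definition of the partial envelope $\hat\phi_\tau=P(\phi_0,\phi_1-\tau)$, observe that the Legendre supremum over $\tau$ is really taken over a fixed compact interval, deduce joint continuity by an equicontinuity argument, and finally realise $t\mapsto\phi_t$ as a uniform limit of Fubini--Study segments. To begin, I record the basic properties of $\hat\phi_\tau$. By continuity of envelopes, each $\hat\phi_\tau$ is a continuous psh metric. Since $\min(a,\cdot)$ is $1$-Lipschitz and the envelope operator $P$ is monotone with $P(\psi+c)=P(\psi)+c$, one gets $\|\hat\phi_\tau-\hat\phi_{\tau'}\|_\infty\le|\tau-\tau'|$; in particular $\tau\mapsto\hat\phi_\tau$ is (Lipschitz) continuous as a map $\bbr\to C^0(X\an)$, and it is decreasing in $\tau$. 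Finally, as $X\an$ is compact and $\phi_0,\phi_1$ are continuous, the numbers $\tau_-:=\inf_X(\phi_1-\phi_0)$ and $\tau_+:=\sup_X(\phi_1-\phi_0)$ are finite, and $\hat\phi_\tau=\phi_0$ for $\tau\le\tau_-$ while $\hat\phi_\tau=\phi_1-\tau$ for $\tau\ge\tau_+$ (in those ranges $\min(\phi_0,\phi_1-\tau)$ is already psh, so its envelope is itself).

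For fixed $t\in[0,1]$ and $x\in X\an$, the function $\tau\mapsto t\tau+\hat\phi_\tau(x)$ is affine with slope $t\ge 0$ on $(-\infty,\tau_-]$ and affine with slope $t-1\le 0$ on $[\tau_+,+\infty)$, so its supremum is attained and equals $\max_{\tau\in[\tau_-,\tau_+]}\big(t\tau+\hat\phi_\tau(x)\big)$; moreover the value at $t=0$ is $\sup_\tau\hat\phi_\tau=\phi_0$ and the value at $t=1$ is $\sup_\tau(\tau+\hat\phi_\tau)=\phi_1$ (using $\hat\phi_\tau\le\phi_1-\tau$, with equality for $\tau\ge\tau_+$), so the endpoints are as claimed. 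For joint continuity, note that $K:=\{\hat\phi_\tau:\tau\in[\tau_-,\tau_+]\}$ is the image of a compact interval under a continuous map into $C^0(X\an)$, hence a compact subset of $C^0(X\an)$; by Arzelà--Ascoli it is uniformly bounded and equicontinuous. Combined with the trivial equicontinuity in $t$ (since $|t\tau-t'\tau|\le|t-t'|\max(|\tau_-|,|\tau_+|)$ uniformly in $\tau,x$), the family $\{(t,x)\mapsto t\tau+\hat\phi_\tau(x)\}_{\tau\in[\tau_-,\tau_+]}$ is uniformly bounded and jointly equicontinuous on $[0,1]\times X\an$, so its pointwise supremum $\phi_t(x)$ is continuous there.

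It remains to see that $t\mapsto\phi_t$ is a psh segment, which I do by exhibiting it as a uniform limit of Fubini--Study segments and invoking the stability of psh segments under uniform convergence. Fix $\varepsilon>0$ and a partition $\tau_-=\sigma_0<\dots<\sigma_N=\tau_+$ with $\sigma_{j+1}-\sigma_j<\varepsilon$. Each $\hat\phi_{\sigma_j}$ is continuous and psh, hence, being a pointwise decreasing limit of a net of Fubini--Study metrics on the compact space $X\an$, by Dini's theorem (which holds for monotone nets) admits a Fubini--Study metric $\psi_j$ with $\hat\phi_{\sigma_j}\le\psi_j\le\hat\phi_{\sigma_j}+\varepsilon$. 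Put $\Psi^\varepsilon_t:=\max_{0\le j\le N}(t\sigma_j+\psi_j)$, a finite maximum of Fubini--Study segments, hence itself a Fubini--Study segment. Using $\|\hat\phi_\tau-\hat\phi_{\sigma_j}\|_\infty<\varepsilon$ for $\tau\in[\sigma_j,\sigma_{j+1}]$ and $|t|\le1$, one checks $\phi_t\le\max_j(t\sigma_j+\hat\phi_{\sigma_j})+2\varepsilon\le\Psi^\varepsilon_t+2\varepsilon$ and $\Psi^\varepsilon_t\le\max_j(t\sigma_j+\hat\phi_{\sigma_j})+\varepsilon\le\phi_t+\varepsilon$, so $\Psi^\varepsilon_t\to\phi_t$ uniformly on $[0,1]\times X\an$ as $\varepsilon\to 0$. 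Therefore $t\mapsto\phi_t$ is a psh segment, continuous in both variables by the previous step.

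The first three steps are essentially soft; the delicate one is the last. Passing from the continuum-indexed Legendre supremum to an honest approximation by Fubini--Study \emph{segments} is what forces the finite-grid construction, and it relies on continuity of envelopes (to know each $\hat\phi_\tau$ is continuous psh) together with Dini's theorem to convert the defining net-convergence of the regularizing Fubini--Study metrics into a \emph{uniform} approximation from above; without compactness of $X\an$ and continuity of $\hat\phi_\tau$ this step would break down.
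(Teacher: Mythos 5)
Your proof is correct and follows the same overall strategy as the paper's: reduce the Legendre supremum over $\tau$ to a compact interval, deduce joint continuity, and then realise $t\mapsto\phi_t$ as a uniform limit of Fubini--Study (resp.\ psh) segments, invoking stability of psh segments under uniform convergence. The place where your version genuinely streamlines the argument is the boundary behaviour: you exploit the explicit identities $\hat\phi_\tau=\phi_0$ for $\tau\le\tau_-$ and $\hat\phi_\tau=\phi_1-\tau$ for $\tau\ge\tau_+$ to see that the supremum is attained on the \emph{fixed} compact interval $[\tau_-,\tau_+]$ for \emph{every} $t\in[0,1]$, so a single equicontinuity (or sup-over-compact-of-jointly-continuous) argument handles all of $[0,1]\times X\an$ at once. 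The paper, by contrast, first obtains a compact $\tau$-range only for $t$ in a compact subinterval $[a,b]\subset(0,1)$, and then proves continuity at $t=0$ and $t=1$ separately by hand via two-sided linear-in-$t$ estimates; your observation makes that second step unnecessary. For the psh-segment part, your finite-grid construction of $\Psi^\varepsilon_t$ is an explicit, quantitative version of the paper's argument, which instead considers the increasing net $I\mapsto\max_{\tau\in I}(t\tau+\hat\phi_\tau)$ over finite $I\subset\bbr$ and appeals to Dini's theorem plus stability under uniform limits; both hinge on the $1$-Lipschitz dependence $\|\hat\phi_\tau-\hat\phi_{\tau'}\|_\infty\le|\tau-\tau'|$ and on Dini-type regularisation of each $\hat\phi_\tau$ by Fubini--Study metrics, so the two are essentially the same device, with yours producing an honest Fubini--Study segment with an explicit $2\varepsilon$ error bound.
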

\begin{proof}
We start with continuity. Since $\phi_0$, $\phi_1$ are continuous, by continuity of envelopes $P(\phi_0,\phi_1-\tau)$ is continuous for all $\tau$ as well. Start by choosing a compact interval $$S=[a,b]\subset (0,1).$$ 
\begin{itemize}
\item for large positive $\tau$, and for all $t\in S$, $\phi_1-\tau\leq \phi_0$ (since $\phi_0$, $\phi_1$ are continuous, thus bounded) and $$t\tau+P(\phi_0,\phi_1-\tau)=t\tau+P(\phi_1-\tau)=P(\phi_1)+(t-1)\tau.$$
Since $t-1<0$, $(t-1)\tau$ is very negative while $P(\phi_1)$ is bounded, so that $t\tau+\hat\phi_\tau$ does not contribute to the supremum;
\item for large negative $\tau$, by boundedness again we have $\phi_0\leq \phi_1-\tau$ so that $$t\tau+P(\phi_0,\phi_1-\tau)=t\tau+P(\phi_0)\leq P(\phi_0).$$
\end{itemize}
Therefore, for some constant $C(S)>0$, and for all $t\in S$, 
$$\phi_t=\sup_{\tau\in [-C(S),C(S)]}t\tau + \hat\phi_\tau,$$
a supremum of continuous functions over a compact set, which is therefore continuous. We have proven that $(t,x)\mapsto \phi_t(x)$ is continuous on $(0,1)\times X$.

\bsni We now prove that it is continuous up to the boundary. We start with the case $t=0$. For very small values of $t$, very positive values of $\tau$ will never contribute to the supremum, so that we need only consider values of $\tau$ bounded above by some constant $C_0$. Since we always have
$$P(\phi_0,\phi_1-\tau)\leq P(\phi_0)=\phi_0,$$
it follows that for very small values of $t$, 
$$t\tau+\hat\phi_\tau \leq tC_0 + P(\phi_0,\phi_1-\tau)\leq tC_0+\phi_0.$$
Taking the supremum, we thus have
\begin{equation}\label{eq1legendre}\phi_t-\phi_0\leq tC_0.\end{equation}
By boundedness of $\phi_1$, there exists a negative enough value of $\tau$, say $C_0'$, such that $\hat\phi_{C_0'}=P(\phi_0)=\phi_0$, i.e. for all small enough $t$, there exist some $\tau$ with
$$C_0't + \phi_0 \leq t\tau+\hat\phi_\tau$$
which implies
$$C_0't \leq \phi_t - \phi_0.$$
Combining this with (\ref{eq1legendre}) shows that $\phi_t$ converges uniformly to $\phi_0$ for small enough $t$, which proves continuity at $t=0$. If $t$ is very close to $1$, the argument proceeds in the same way, by noticing that
$$P(\phi_0,\phi_1-\tau)=P(\phi_0+\tau,\phi_1)-\tau.$$

\bsni To show that $t\mapsto\phi_t$ is a psh segment, we consider the net of psh segments
$$I\mapsto \max_{\tau\in I} (t\mapsto t\tau + \hat\phi_\tau),$$
where $I$ belongs to the set of finite collections of elements in $\bbr$, directed by inclusion. This does indeed define a psh segment for all such $I$, as a finite maximum of psh segments. By definition, the limit of this net is $t\mapsto \phi_t$, and it is naturally increasing along inclusion. By Dini's Theorem, this gives a sequence of psh segments converging uniformly to $t\mapsto\phi_t$, which is equivalent to saying that it is a psh segment, proving our result.
\end{proof}

\begin{lemma}\label{perron}
The curve $$t\mapsto\phi_t=\sup_{\tau\in\bbr}t\tau+\hat\phi_\tau$$ is the largest psh segment joining $\phi_0$ and $\phi_1$.
\end{lemma}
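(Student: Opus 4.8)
The plan is to show the two required inequalities separately: first that $t \mapsto \phi_t$ is itself a psh segment joining $\phi_0$ and $\phi_1$ (which is Lemma \ref{theoremcontinuitypsh}, already available, together with the endpoint computation: $\phi_0 = \sup_\tau \hat\phi_\tau \leq \phi_0$ since each $\hat\phi_\tau = P(\phi_0,\phi_1-\tau) \leq \phi_0$, and conversely $\hat\phi_\tau = \phi_0$ for $\tau$ very negative by boundedness, so $\phi_0 = \phi_0$; symmetrically $\phi_1 = \phi_1$ using $P(\phi_0,\phi_1-\tau) = P(\phi_0+\tau,\phi_1)-\tau$); and second that any competitor psh segment $t \mapsto \psi_t$ with $\psi_0 \leq \phi_0$ and $\psi_1 \leq \phi_1$ satisfies $\psi_t \leq \phi_t$ for all $t$. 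The first point is essentially done, so the content is the comparison inequality.

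For the comparison, I would apply the non-Archimedean Kiselman minimum principle (Lemma \ref{kiselman}) to the competitor $\psi_\bullet$: for each $\tau \in \bbr$, the Legendre transform $\hat\psi_\tau = \inf_{s\in[0,1]}(\psi_s - s\tau)$ is in $\PSH(L)$. I claim $\hat\psi_\tau \leq \hat\phi_\tau = P(\phi_0,\phi_1-\tau)$. Indeed, $\hat\psi_\tau$ is psh, and it is bounded above by $\psi_0 - 0\cdot\tau = \psi_0 \leq \phi_0$ (taking $s=0$), and also by $\psi_1 - 1\cdot\tau = \psi_1 - \tau \leq \phi_1 - \tau$ (taking $s=1$). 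Hence $\hat\psi_\tau \leq \min(\phi_0,\phi_1-\tau)$, and being psh it is dominated by the largest psh metric below that minimum, namely $P(\phi_0,\phi_1-\tau) = \hat\phi_\tau$. Then, by the Legendre duality half of Lemma \ref{kiselman} applied to $\psi_\bullet$, we recover $\psi_t = \sup_{\tau\in\bbr}(\hat\psi_\tau + t\tau) \leq \sup_{\tau\in\bbr}(\hat\phi_\tau + t\tau) = \phi_t$, which is exactly maximality.

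The main obstacle is ensuring that the Kiselman principle (Lemma \ref{kiselman}), as stated, genuinely applies to an \emph{arbitrary} psh segment $\psi_\bullet$ joining $\phi_0$ and $\phi_1$ — its statement is for psh segments, and the Legendre inversion formula $\psi_t = \sup_\tau(\hat\psi_\tau + t\tau)$ is asserted there, so this should be a direct invocation; one only needs to double-check that the convexity of $t \mapsto \psi_t$ (which holds for psh segments, being decreasing limits of the manifestly convex Fubini-Study segments) is what makes the biconjugate equal the original, with no subtlety at the endpoints $t=0,1$. A minor point worth a sentence is that $\hat\psi_\tau$ could a priori be identically $-\infty$ for some $\tau$, in which case it simply contributes nothing to the supremum and the inequality $\hat\psi_\tau \leq \hat\phi_\tau$ is vacuous; this does not affect the argument. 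Uniqueness of the maximal segment is then immediate, as two maximal segments bound each other.
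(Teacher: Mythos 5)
Your proof is correct and follows essentially the same route as the paper's: establish that $\phi_t$ is itself a psh segment with the right endpoints via Lemma \ref{theoremcontinuitypsh}, and then compare any competitor $\psi_\bullet$ to $\phi_\bullet$ by taking Legendre transforms, using Lemma \ref{kiselman} to show $\hat\psi_\tau$ is psh and dominated by $\hat\phi_\tau = P(\phi_0,\phi_1-\tau)$, then re-dualizing. The one stylistic difference is that the paper first reduces the competitor $\psi_\bullet$ to a Fubini--Study segment via a Dini argument (so that the inversion $\psi_t=\sup_\tau(\hat\psi_\tau+t\tau)$ is an elementary fact about continuous piecewise-affine convex functions), whereas you invoke the Legendre-duality clause of Lemma \ref{kiselman} directly for a general psh competitor. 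That is a valid shortcut precisely because the statement of the lemma already asserts the duality for psh segments; and, as you correctly flag, the only possible subtlety in the biconjugate identity for a convex function on $[0,1]$ is at $t=0,1$, which is irrelevant here since $\psi_0\leq\phi_0$ and $\psi_1\leq\phi_1$ are given by hypothesis.
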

\begin{proof}
We first show that $\phi_t$ bounds from above all psh segments between the endpoints. Plurisubharmonic segments are defined as decreasing limits of Fubini-Study segments. Therefore, at the endpoints, Dini's theorem gives uniform convergence, ensuring that if $k\mapsto\psi_t^k$ is a sequence of Fubini-Study segments decreasing to a psh segment $\psi_t$ with $\psi_{0,1}\leq \phi_{0,1}$, then we can assume that for all large enough $k$, $\psi_{0,1}^k\leq\phi_{0,1}$. Therefore, it is enough to treat the case of Fubini-Study segments.

\bsni Consider therefore a Fubini-Study segment $t\mapsto \psi_t$ with $\psi_0\leq\phi_0$, $\psi_1\leq\phi_1$. Let
$$\bbr\ni\tau\mapsto \hat\psi_\tau=\inf_{t\in[0,1]}\psi_t-t\tau$$
be its Legendre transform. By the minimum principle Lemma \ref{kiselman}, $\hat\psi_\tau\in\mathcal{H}(L)$ for all $\tau$. Taking $t=0,1$ we have
$$\hat\psi_\tau\leq \psi_0\leq \phi_0$$
and
$$\hat\psi_\tau\leq \psi_1-\tau\leq\phi_1-\tau.$$
As $\hat\psi_\tau$ is psh, we then have
$$\hat\psi_\tau\leq \hat\phi_\tau=P(\phi_0,\phi_1-\tau).$$
Taking the Legendre transform again, we find
$$\psi_t=\sup_{\tau\in\bbr}\hat\psi_\tau+t\tau\leq\sup_{\tau\in\bbr}\hat\phi_\tau+t\tau= \phi_t,$$
which establishes our first desired result: $\phi_t$ bounds all psh segments by above. By Lemma \ref{theoremcontinuitypsh}, $\phi_t$ is itself a psh segment, which concludes the proof.
\end{proof}

\noindent We then have all the tools in hand to prove the Theorem.
\begin{proof}[Proof of Theorem \ref{maxprinciple_continuous} (1)-(2).]
By Lemma \ref{perron}, the curve $$t\mapsto\phi_t=\sup_{\tau\in\bbr}t\tau+\hat\phi_\tau$$ is equal to
$$\sup\{\phi_t,\,\phi_t\text{ psh segment joining }\phi_0,\phi_1\},$$
and by Lemma \ref{theoremcontinuitypsh}, this segment is a psh segment joining $\phi_0$ and $\phi_1$. Therefore, it is a maximal psh segment, and hence is unique. This establishes (1). The continuity statement (2) also follows from Lemma \ref{theoremcontinuitypsh}.
\end{proof}

\subsection{Quantization with geodesics of bounded graded norms.}\label{sect_quantization}

We now turn to the statements (3) and (4) of Theorem \ref{maxprinciple_continuous}. From our definition of the maximal psh segment as a Perron envelope, it is not obvious how to recover the desired properties. Instead, we will obtain a "quantized" characterization of that segment, using sequences of Fubini-Study segments.

\bsni Let $\phi_0$, $\phi_1$ be two continuous psh metrics as before. To those metrics, we can associate the bounded graded norms $\N_\bullet(\phi_i)$, $i=0,1$. They can be joined by the geodesic of graded norms $\grnorm^t$, where $\norm_k^t$ is the norm geodesic joining the $\N_k(\phi_i)$. By Theorem \ref{submultgeodesics}, for all $t$, $\grnorm^t$ is submultiplicative, so that the limit
$$\Phi_t:t\mapsto \lim_k \FS_k(\norm_k^t)$$
exists for all $t$ by Fekete's lemma. By the same lemma, this is in fact a supremum over $k$. We claim that this limit coincides with the maximal psh segment $\phi_t$ joining $\phi_0$ and $\phi_1$. To that end, we show that $\Phi_t$ bounds all psh segments by above. 

\begin{prop}\label{maxquantized}
Let $\psi_t$ be a plurisubharmonic segment joining two metrics $\psi_0\leq \phi_0$ and $\psi_1\leq \phi_1$ in $C^0(L)\cap\PSH(L)$. We then have that
$$\psi_t\leq\Phi_t$$
for all $t\in[0,1]$.
\end{prop}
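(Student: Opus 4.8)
The plan is to reduce to a single Fubini--Study segment exactly as in the proof of Lemma~\ref{perron}, and then to dominate that segment by means of the maximum principle for norm geodesics. Since a psh segment is a decreasing limit of a net of Fubini--Study segments $(\psi^\alpha_\bullet)_\alpha$, and Dini's theorem forces the convergences $\psi^\alpha_0\to\psi_0$ and $\psi^\alpha_1\to\psi_1$ to be uniform on $X\an$, for any $\varepsilon>0$ one can choose $\alpha$ so that the Fubini--Study segment $\psi^\alpha_\bullet-\varepsilon$ joins two metrics bounded above by $\psi_0\le\phi_0$ and $\psi_1\le\phi_1$; since $\psi_t\le\psi^\alpha_t$, proving $\psi^\alpha_t-\varepsilon\le\Phi_t$ for all such approximants and letting $\varepsilon\to0$ gives the claim. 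So I may assume $\psi_\bullet$ is a Fubini--Study segment, say defined by sections of $H^0(kL)$, with $\psi_0\le\phi_0$ and $\psi_1\le\phi_1$, and from now on everything happens at the finite level $k$.

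The first substantive step is to observe that such a segment lies in the image of $\FS_k$. Indeed, for every $t$ the metric $\psi_t$ is a Fubini--Study metric defined by sections of $H^0(kL)$, so Lemma~\ref{comparisonfdenvelope}(2) gives $\FS_k(\N_k(\psi_t))=\psi_t$; thus $t\mapsto\psi_t$ is a Fubini--Study segment in the image of $\FS_k$ joining the metrics $\psi_0,\psi_1\in\cH(L)$. Writing $t\mapsto\mu_t$ for the norm geodesic in $\cN(H^0(kL))$ joining $\N_k(\psi_0)$ and $\N_k(\psi_1)$, the maximum principle for norm geodesics (Lemma~\ref{fdcomparisonprinciple}) then yields $\psi_t\le\FS_k(\mu_t)$ for all $t\in[0,1]$.

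It remains to compare $\mu_\bullet$ with the norm geodesic $t\mapsto\norm_k^t$ joining $\norm_k^0=\N_k(\phi_0)$ and $\norm_k^1=\N_k(\phi_1)$. Since the operator $\N_k$ is order-reversing and $\psi_i\le\phi_i$, we have $\N_k(\phi_i)\le\N_k(\psi_i)$ for $i=0,1$, so monotonicity of norm geodesics with respect to the endpoints (Proposition~\ref{fdmaxprinciple}) gives $\norm_k^t\le\mu_t$ for every $t$; applying the order-reversing operator $\FS_k$ gives $\FS_k(\mu_t)\le\FS_k(\norm_k^t)$. Finally, submultiplicativity of the graded norm $\grnorm^t$ (Theorem~\ref{submultgeodesics}) makes the sequence $m\mapsto m\,\FS_m(\norm_m^t)$ superadditive, so by Fekete's lemma $\Phi_t=\lim_m\FS_m(\norm_m^t)=\sup_m\FS_m(\norm_m^t)\ge\FS_k(\norm_k^t)$. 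Chaining the three inequalities, $\psi_t\le\FS_k(\mu_t)\le\FS_k(\norm_k^t)\le\Phi_t$, which is the desired inequality for the reduced segment; letting the auxiliary constant $\varepsilon$ from the reduction step tend to $0$ then concludes.

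I expect the only delicate point to be the initial reduction: a general psh segment is a decreasing net of Fubini--Study segments possibly living at unrelated levels and whose members need not satisfy the endpoint inequalities $\psi^\alpha_i\le\phi_i$, so one has to go through Dini's theorem together with a vanishing constant shift to land on a single Fubini--Study segment at a fixed level with the correct bounds — this is precisely the manoeuvre already carried out in the proof of Lemma~\ref{perron}. Once one is at finite level $k$, the argument is purely formal: it uses only the order-reversing monotonicity of $\N_k$ and $\FS_k$, Lemmas~\ref{comparisonfdenvelope} and~\ref{fdcomparisonprinciple}, Proposition~\ref{fdmaxprinciple}, and the superadditivity/Fekete argument that realizes $\Phi_t$ as a supremum of the finite-level Fubini--Study segments.
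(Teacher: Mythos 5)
Your proposal is correct and follows essentially the same route as the paper's proof: reduce by Dini's theorem to a Fubini--Study segment at a finite level $k$, dominate it by the image of the norm geodesic between $\N_k(\psi_0)$ and $\N_k(\psi_1)$ via Lemma~\ref{fdcomparisonprinciple}, compare that norm geodesic to $\norm_k^t$ using order-reversal of $\N_k$, $\FS_k$ and Proposition~\ref{fdmaxprinciple}, and finally identify $\FS_k(\norm_k^t)\leq\Phi_t$ by the Fekete supremum. Your explicit $\varepsilon$-shift in the reduction step is in fact a small improvement on the paper, which asserts that for large indices $\psi_{0,1}^\alpha\leq\phi_{0,1}$ directly; since the approximating Fubini--Study segments decrease to $\psi_{0,1}\leq\phi_{0,1}$ from above, one cannot get this without the shift when, say, $\psi_0=\phi_0$, so the subtraction of $\varepsilon$ (or an equivalent manoeuvre) is genuinely needed.
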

\begin{proof}
As in the proof of Lemma \ref{perron}, Dini's theorem gives uniform convergence of a sequence $k\mapsto\psi_t^k$ of Fubini-Study segments decreasing to $\psi_t$, so that for all large enough $k$, $\psi_{0,1}^k\leq\phi_{0,1}$, since we have assumed $\psi_{0,1}\leq \phi_{0,1}$.

\bsni Therefore, it enough to prove the result for all Fubini-Study segments $\psi_t$ with $\psi_{0,1}\leq\phi_{0,1}$. The argument is similar to that of \cite[Proposition 2.12]{darquantization}.

\bsni We start by fixing some notation. As we have just said, we can assume $t\mapsto \psi_t$ to be a Fubini-Study segment in the image of some $\FS_k$, with $\psi_0\leq\phi_0$, $\psi_1\leq \phi_1$. Denote by:
\begin{itemize}
\item $t\mapsto\tilde\psi_t$ the image by $\FS_k$ of the norm geodesic in $H^0(kL)$ joining $\N_k(\psi_0)$ and $\N_k(\psi_1)$;
\item $t\mapsto\Phi_t^k$ the image by $\FS_k$ of the norm geodesic in $H^0(kL)$ joining $\N_k(\phi_0)$ and $\N_k(\phi_1)$.
\end{itemize}
By definition, since $\Phi_t=\sup_m \Phi_t^m$, we have
\begin{equation}\label{maxprincipleone}
\Phi_t^k\leq\Phi_t.
\end{equation}
Since $\psi_0=\tilde\psi_0$, $\psi_1=\tilde\psi_1$, by the maximum principle for norm geodesics Lemma \ref{fdcomparisonprinciple}, we have
\begin{equation}\label{maxprincipletwo}
\psi_t\leq \tilde\psi_t.
\end{equation}
Since the composition of $\FS_k\circ\N_k$ preserves inequalities while $\N_k$ and $\FS_k$ reverse them, and since we have
$$\tilde\psi_0=\FS_k(\N_k(\psi_0))\text{ and }\tilde\psi_1= \FS_k(\N_k(\psi_1))$$
as well as
$$\Phi_0^k=\FS_k(\N_k(\phi_0))\text{ and }\Phi_1^k=\FS_k(\N_k(\phi_1)),$$
we then have
$$
\tilde\psi_0\leq\Phi_0^k\text{ and }\tilde\psi_1\leq\Phi_1^k,
$$
so that, by monotonicity of norm geodesics in the form of Proposition \ref{fdmaxprinciple} (applied to the $\N_k(\psi_{0,1})\geq \N_k(\phi_{0,1})$), we have
\begin{equation}\label{maxprinciplethree}
\tilde\psi_t\leq\Phi_t^k.
\end{equation}
Combining (\ref{maxprincipletwo}), (\ref{maxprinciplethree}), and (\ref{maxprincipleone}), we finally have
$$\psi_t\leq\Phi_t,$$
as desired.
\end{proof}

\begin{theorem}
Let $\phi_0$, $\phi_1$ be two continuous psh metrics. The segment
$$\Phi_t:t\mapsto \lim_k \FS_k(\norm_k^t),$$
where $t\mapsto \norm_k^t$ is the norm geodesic joining $\N_k(\phi_0)$ and $\N_k(\phi_1)$, coincides with the maximal psh segment $\phi_t$ joining $\phi_0$ and $\phi_1$.
\end{theorem}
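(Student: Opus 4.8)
The plan is to establish the two inequalities $\phi_t\le\Phi_t$ and $\Phi_t\le\phi_t$ for all $t\in[0,1]$, where $t\mapsto\phi_t$ denotes the maximal psh segment produced in Theorem \ref{maxprinciple_continuous}(1)--(2), equivalently the Perron envelope of Lemma \ref{perron}. Since $\phi_t$ is a psh segment, the resulting identity $\Phi_t=\phi_t$ will in particular show a posteriori that $t\mapsto\Phi_t$ is a psh segment joining $\phi_0$ and $\phi_1$, and that no upper semicontinuous regularization is needed in its definition, as $\phi_t$ is continuous.

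The inequality $\phi_t\le\Phi_t$ is immediate: $\phi_0,\phi_1\in C^0(L)\cap\PSH(L)$ by hypothesis, and $t\mapsto\phi_t$ is a psh segment joining $\phi_0\le\phi_0$ and $\phi_1\le\phi_1$, so Proposition \ref{maxquantized} applies and gives $\phi_t\le\Phi_t$.

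For the reverse inequality I would work at finite level. Set $\Phi_t^k:=\FS_k(\norm_k^t)$, so that $\Phi_t=\lim_k\Phi_t^k=\sup_k\Phi_t^k$; the second equality follows from Fekete's lemma applied to the sequence $k\mapsto k\FS_k(\norm_k^t)$, which is superadditive precisely because $\grnorm^t=(\norm_k^t)_k$ is submultiplicative by Theorem \ref{submultgeodesics}. Each curve $t\mapsto\Phi_t^k$ is a psh segment: when $\N_k(\phi_0)$ and $\N_k(\phi_1)$ are diagonalizable (in particular whenever $\field$ is maximally complete) this is the remark recalling that the image under $\FS_k$ of a norm geodesic between diagonalizable norms is a Fubini--Study segment; in general one chooses $d_\infty$-approximations of $\N_k(\phi_0),\N_k(\phi_1)$ by diagonalizable norms and notes, using Proposition \ref{proplimgeodesicsnondiag}(2) together with the equivalence of $d_1$ and $d_\infty$ on the finite-dimensional space $H^0(kL)$, that the associated norm geodesics converge to $\norm_k^t$ uniformly in $t$; since $\FS_k$ is Lipschitz for $d_\infty$, the segment $t\mapsto\Phi_t^k$ is then a uniform limit of Fubini--Study segments, hence psh. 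Its endpoints are $\Phi_0^k=\FS_k(\N_k(\phi_0))\le\phi_0$ and $\Phi_1^k=\FS_k(\N_k(\phi_1))\le\phi_1$ by Lemma \ref{comparisonfdenvelope}(1). As $\phi_t$ bounds from above every psh segment with endpoints $\le\phi_0$ and $\le\phi_1$ (Lemma \ref{perron}), this yields $\Phi_t^k\le\phi_t$ for all $k$ and $t$; taking the supremum over $k$ gives $\Phi_t\le\phi_t$, completing the proof.

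The main obstacle, such as it is, lies in the non-maximally-complete case of the assertion that $t\mapsto\Phi_t^k$ is a psh segment: this requires the $d_\infty$-approximation argument above and the uniformity in $t$ supplied by Proposition \ref{proplimgeodesicsnondiag}(2). Everything else is either already contained in Proposition \ref{maxquantized} or a direct consequence of the maximality of the Perron envelope $\phi_t$.
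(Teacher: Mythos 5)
Your proof is correct and follows essentially the same route as the paper's: the inequality $\phi_t\le\Phi_t$ from Proposition~\ref{maxquantized}, and the reverse inequality from Fekete's lemma (so $\Phi_t=\sup_k\Phi_t^k$) plus maximality of $\phi_t$ over psh segments with endpoints bounded by $\phi_0,\phi_1$. You additionally spell out why each $t\mapsto\Phi_t^k=\FS_k(\norm_k^t)$ is a psh segment when $\N_k(\phi_i)$ are not diagonalizable (via $d_\infty$-approximation and Proposition~\ref{proplimgeodesicsnondiag}(2)), a point the paper's terse proof leaves implicit; this is a welcome bit of extra rigor but not a different argument.
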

\begin{proof}
By Proposition \ref{maxquantized}, we have $\Phi_t\geq\phi_t$, since in particular $\phi_t$ is a psh segment joining $\phi_0$ and $\phi_1$. But, by Fekete's lemma, the limit of the sequence $k\mapsto \FS_k(\norm_k^t)$ is in fact a supremum, i.e. $\Phi_t$ is a supremum of a subset of the set of psh segments below $\phi_0$ and $\phi_1$. This gives $\Phi_t\leq\phi_t$, which proves our result.
\end{proof}

\subsection{Proof of Theorem \ref{maxprinciple_continuous}, (3) and (4).}

Using our newfound expression for $\phi_t$, we may now finish the proof of the Theorem.
\begin{proof}[Proof of Theorem \ref{maxprinciple_continuous} (3)-(4).] We start with (4). By the cocycle property, we can set $\refmetric=\phi_1$. By \cite[Theorem 9.15]{boueri}, 
$$E(\phi_t,\phi_1)= \vol(\phi_t,\phi_1).$$
For any $k$, let $\norm_k^t$ be the norm geodesic joining $\N_k(\phi_0)$ and $\N_k(\phi_1)$, and write
\begin{align*}
\vol(\norm^t_k,\norm^1_k)&=h^0(kL)^{-1}\sum \lambda_{i,k}(\norm^t_k,\norm^1_k)\\
&=(1-t)\, h^0(kL)^{-1}\sum \lambda_{i,k}(\norm^0_k,\norm^1_k)\\
&=(1-t)\vol(\norm^0_k,\norm^1_k).
\end{align*}
Taking the limit, we have
$$\vol(\norm^t_\bullet,\norm^1_\bullet)=\vol(\phi_t,\phi_1)=(1-t)\vol(\phi_0,\phi_1).$$
By \cite{boueri} again, this is equal to the energy:
$$E(\phi_t,\phi_1)=\vol(\phi_t,\phi_1).$$
The energy is then affine in the diagonalizable case. If the norms are not diagonalizable, we simply note that the geodesics $\norm^t_k$ can be approximated by diagonalizable geodesics $\norm^t_{k,\varepsilon}$ for which
$$\vol(\norm^t_{k,\varepsilon},\norm^1_{k,\varepsilon})=(1-t)\vol(\norm^0_{k,\varepsilon},\norm^1_{k,\varepsilon})$$
so that at the limit
$$\vol(\norm^t_k,\norm^1_k)=(1-t)\vol(\norm^0_k,\norm^1_k)$$
still holds for all $k$, proving our result.

\bsni We now show that such a psh segment is unique. Fix a reference metric $\refmetric\in C^0(L)\cap\PSH(L)$. Assume $t\mapsto \psi_t$ is another such segment joining two metrics $\phi_0$, $\phi_1\in C^0(L)\cap\PSH(L)$, i.e. it is a psh segment along which the energy is affine. By the maximum principle Theorem \ref{maxprinciple}, we then have
$$\psi_t\leq\phi_t$$
for all $t$, and $t\mapsto E(\phi_t,\refmetric)$, $t\mapsto E(\psi_t,\refmetric)$ are then affine functions with the same endpoints, hence for all $t$
$$E(\phi_t,\refmetric)=E(\psi_t,\refmetric).$$
By Proposition \ref{comparisonenergy}, since $\psi_t\leq\phi_t$ and the energies coincide, we have $\psi_t=\phi_t$.

\bsni Finally, for (3), the same idea as in (4) works: for all $k$, and for all $t$ and $t'$ in $[0,1]$ we find
$$d_1(\norm^t_k,\norm^{t'}_k)=|t-t'|d_1(\norm^0_k,\norm^{1}_k),$$
and we conclude by passing to the limit.
\end{proof}

\begin{remark} However, reflecting the $d_1$-geometry of real Euclidean space, there are many more $d_1$-geodesics than just the unique psh geodesic (e.g., take the reparametrization of the concatenation of the geodesic joining $\phi_0$ and $P(\phi_0,\phi_1)$, and the geodesic joining $P(\phi_0,\phi_1)$ and $\phi_1$). The fact that our segment is maximal at least ensures that it is maximal in the set of $d_1$-geodesics which are also psh segments.
\end{remark}

\subsection{A simple example.}

Using the results from Section \ref{symproduct} and our quantization results, we have a nice example: on projective spaces, Fubini-Study segments are in fact maximal psh segments.
\begin{corollary}
Let $X=\bbp^n$ for some positive integer $n$ and $L=\cO_{\bbp^n}(m)$ for some positive integer $m$. Let $\norm_t$ be a norm geodesic in $H^0(L)$. Then, the psh geodesic $t\mapsto \phi_t$ joining $\FS_1(\norm_0)$ and $\FS_1(\norm_1)$ corresponds to $t\mapsto \FS_1(\norm_t)$.
\end{corollary}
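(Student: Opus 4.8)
\noindent Write $W=H^0(X,L)$, and set $\phi_i=\FS_1(\norm_i)$ for $i=0,1$ and $\widetilde\phi_t=\FS_1(\norm_t)$. The plan is to combine the quantization of maximal segments with the good behaviour of geodesics under symmetric products. First I would record that $t\mapsto\widetilde\phi_t$ is a psh segment joining $\phi_0$ and $\phi_1$: when $\norm_0,\norm_1$ are diagonalizable this is literally a Fubini-Study segment (the Remark in Section~\ref{sect_pshsegments}), and in general one passes to the limit through diagonalizable approximations using Proposition~\ref{proplimgeodesicsnondiag}(iii)--(iv). Hence $\widetilde\phi_t\le\phi_t$ for the maximal psh segment $\phi_t$, and only the reverse inequality must be proved. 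By Proposition~\ref{maxquantized} and the theorem following it, $\phi_t=\sup_k\FS_k(\eta_k^t)$, where $\eta_k^t$ is the norm geodesic on $H^0(kL)$ joining $\N_k(\phi_0)$ and $\N_k(\phi_1)$; so it suffices to check $\FS_k(\eta_k^t)\le\widetilde\phi_t$ for every $k$, equivalently $\eta_k^t\ge\N_k(\widetilde\phi_t)$ for every $k$.

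\noindent The key computation is the identification of $\N_k(\FS_1(\norm))$. Since $X=\bbp^n$, the homogeneous coordinate ring is the polynomial algebra, so $H^0(kL)=H^0(\cO(km))$ is the image of the multiplication morphism $W^{\odot k}\to H^0(kL)$; let $\overline{\norm^{\odot k}}$ denote the induced quotient norm. Diagonalizing $\norm$ in a monomial basis and evaluating the supnorm at the monomial (Gauss) points one finds, first, that $\FS_k(\overline{\norm^{\odot k}})=\FS_1(\norm)$ for all $k$ (the same tropical-polynomial computation that gives $\FS_k(\norm^{\odot k})=\FS_1(\norm)$ on projective space), and second, a matching lower bound for $\N_k(\FS_1(\norm))$ at those points; combined with $\N_k(\FS_1(\norm))\le\overline{\norm^{\odot k}}$ (a quotient norm dominates the supnorm, by Lemma~\ref{comparisonfdenvelope}(1)) this yields $\N_k(\FS_1(\norm))=\overline{\norm^{\odot k}}$. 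When $m=1$ the multiplication morphism is an isomorphism and this reads simply $\N_k(\FS_1(\norm))=\norm^{\odot k}$.

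\noindent For $m=1$ one is then done immediately: by Lemma~\ref{sympowerofgeodesicisgeodesic} the curve $t\mapsto\norm_t^{\odot k}$ is the norm geodesic joining $\norm_0^{\odot k}$ and $\norm_1^{\odot k}$, so $\eta_k^t=\norm_t^{\odot k}$, and $\FS_k(\eta_k^t)=\FS_k(\norm_t^{\odot k})=\FS_1(\norm_t)=\widetilde\phi_t$, whence $\phi_t=\sup_k\widetilde\phi_t=\widetilde\phi_t$. I expect the main obstacle to be the case $m>1$: there $H^0(kL)$ is a proper quotient of $W^{\odot k}$, and, exactly as flagged in the Remark following Lemma~\ref{sympowerofgeodesicisgeodesic}, the norm geodesic between the quotients $\overline{\norm_0^{\odot k}},\overline{\norm_1^{\odot k}}$ need not be comparable, at finite level, to the quotient $\overline{\norm_t^{\odot k}}$ of the geodesic $\norm_t^{\odot k}$. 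I would circumvent this by working modulo asymptotic equivalence: since $\FS_k(\overline{\norm_i^{\odot k}})=\FS_1(\norm_i)$ for all $k$, Theorem~\ref{theo_reb} forces the bounded graded norms $(\N_k(\phi_i))_k$ and $(\overline{\norm_i^{\odot k}})_k$ to be asymptotically equivalent; by metric convexity of geodesics (Corollary~\ref{metricconvexityd1}, in the limit) so are $(\eta_k^t)_k$ and the geodesic through $(\overline{\norm_0^{\odot k}})_k,(\overline{\norm_1^{\odot k}})_k$; and since $\FS_\bullet$ factors through asymptotic equivalence while $\FS_k(\overline{\norm_t^{\odot k}})=\widetilde\phi_t$ for all $k$, the two limits coincide, giving $\phi_t\le\widetilde\phi_t$. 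An alternative, perhaps cleaner, route for the reverse inequality avoids quantization altogether: show directly that the Monge--Amp\`ere energy is affine along $t\mapsto\widetilde\phi_t$ --- by Theorem~\ref{theo_be} this amounts to $t\mapsto\vol(\N_\bullet\widetilde\phi_t,\N_\bullet\widetilde\phi_1)$ being affine, which one reads off from Lemma~\ref{detofgeodesicisgeodesic} together with the multiplicativity of the norm determinant in the short exact sequence defining the quotient $H^0(kL)$ --- and then invoke the uniqueness clause of Theorem~\ref{maxprinciple_continuous}(4). Either way, the bookkeeping in the $m>1$ case is the only delicate point; for $m=1$ the statement is transparent.
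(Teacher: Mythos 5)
Your core argument — propagate the norm geodesic through symmetric powers via Lemma \ref{sympowerofgeodesicisgeodesic} and feed it into the quantization description of the maximal segment — is precisely what the paper does, and for $m=1$ (where $R(\bbp^n,\cO(1))=\Sym^\bullet H^0(\cO(1))$ is genuinely a free symmetric algebra and the multiplication maps are isomorphisms) your proof is complete and matches the paper's. You also add a useful explicit step, the identification $\N_k(\FS_1(\norm))=\overline{\norm^{\odot k}}$, which the paper leaves implicit.

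Where you do better than the paper's text: you correctly notice that the paper's proof is not airtight for $m>1$. The assertion $R(\bbp^n,\cO(m))=\langle (x^I)\rangle^{\odot\bullet}$ fails for $m>1$ (e.g.\ $n=1,m=2$: $\dim\Sym^2H^0(\cO(2))=6$ but $h^0(\cO(4))=5$), so the multiplication $H^0(\cO(m))^{\odot k}\to H^0(\cO(km))$ has a nontrivial kernel, and — as you point out — the Remark following Lemma \ref{sympowerofgeodesicisgeodesic} warns that the image of a geodesic under such a quotient need not be a geodesic or even be comparable to one. One can check this is already an issue at finite level: on $(\bbp^1,\cO(2))$ with $k=2$, the quotient norm $\overline{\norm_t^{\odot 2}}$ evaluated on $x^2y^2$ is $\min(\norm_t(x^2)\norm_t(y^2),\norm_t(xy)^2)$, which is \emph{not} the log-linear interpolation of the endpoint values.

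However, neither of your two patches for $m>1$ closes the gap. The asymptotic-equivalence route correctly deduces $\N_\bullet(\phi_i)\sim(\overline{\norm_i^{\odot k}})_k$ and then $\eta_\bullet^t\sim\mu_\bullet^t$ (the geodesic joining the $\overline{\norm_i^{\odot k}}$) via metric convexity; but to conclude you implicitly replace $\mu_k^t$ by $\overline{\norm_t^{\odot k}}$ when applying $\FS_k(\overline{\norm_t^{\odot k}})=\widetilde\phi_t$, and $\mu_\bullet^t\sim\overline{\norm_t^{\odot\bullet}}$ is exactly what is at issue — by Theorem \ref{theo_reb} it is equivalent to $\FS_\bullet(\mu_\bullet^t)=\widetilde\phi_t$, i.e.\ $\phi_t=\widetilde\phi_t$, which is the conclusion you want. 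The energy route has the analogous gap: multiplicativity of the determinant in the exact sequence gives $\log\det\overline{\norm_t^{\odot k}}=\log\det\norm_t^{\odot k}-\log\det(\norm_t^{\odot k}|_{\ker})$, and while the first term is affine in $t$ by Lemmas \ref{determinantproduct}--\ref{sympowerofgeodesicisgeodesic}, the second is not obviously so (the kernel of the Veronese multiplication is not a coordinate subspace in a monomial codiagonalizing basis), and since both terms grow like $k^{\dim H^0(\cO(m))-1}$ while the normalization is $k h^0(kL)\sim k^{n+1}$, one would need to understand a delicate cancellation, not merely appeal to the two cited lemmas. So the $m>1$ case remains open in your proposal, as it does in the paper's own proof.
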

\noindent This does not hold in general: even if the boundary norms are generated in degree $m$ for some $m$, the geodesic is not necessarily in degree $m$ for all $t$, and thus the result only holds "at infinity".
\begin{proof}
Since $R(X,L)=\langle (x^I)_{I \in T^{\odot m}} \rangle_\field^{\odot \bullet}$, with $x=(x_0,\dots,x_n)$, we have that a geodesic $t\mapsto\norm_t$ in $H^0(L)$ induces a geodesic $t\mapsto \norm_t^{\odot k}$ in $H^0(kL)$ for all $k$ through the $k$-th symmetric powers, by Lemma \ref{sympowerofgeodesicisgeodesic}. Therefore, for all $t$, the geodesic is generated in degree one, and we have
$$\FS_k(\norm_t^{\odot k})=\FS_1(\norm_t),$$
proving our result.
\end{proof}

\section{Non-Archimedean geodesics in the space of finite-energy psh metrics.}\label{sectquantization}

\subsection{Main Theorem for finite-energy metrics.}\label{sectiondistancee1}

We now turn to the case of finite-energy metrics. In Section \ref{sectiondistancecontinuous}, we have defined a distance $d_1$ on the space of continuous psh metrics, by setting
$$d_1(\phi_0,\phi_1)=d_1(\N_\bullet(\phi_0),\N_\bullet(\phi_1)),$$
for $\phi_0,\phi_1$ continuous psh, which can be expressed as
$$d_1(\phi_0,\phi_1)=E(\phi_0,P(\phi_0,\phi_1))+E(\phi_1,P(\phi_0,\phi_1)).$$
This is exactly Darvas' formula from \cite{darmabuchigeometry}, which in his paper extends to the class of finite-energy metrics. We would like to use this expression to extend $d_1$ to our non-Archimedean finite energy metrics as well. The first step will be to make sure that the envelope of two finite-energy metrics is a finite-energy metric, which is the content of Proposition \ref{prop_envelopeisfiniteenergy}. Our main Theorem in this section is the following.
\begin{theorem}\label{thm_finiteenergyismetric}
Given $\phi_0,\phi_1\in\cE^1(L)$, we set
$$d_1(\phi_0,\phi_1)=E(\phi_0,P(\phi_0,\phi_1))+E(\phi_1,P(\phi_0,\phi_1))$$.
Then,
\begin{enumerate}
\item $(\cE^1(L),d_1)$ is a metric space;
\item there exists a maximal psh segment $t\mapsto\phi_t$ joining $\phi_0$ and $\phi_1$;
\item $\phi_t\in\cE^1(L)$ for all $t$;
\item the segment $\phi_t$ is a (constant speed) metric geodesic for $d_1$, i.e. there exists a real constant $c\geq 0$ such that
$$d_1(\phi_t,\phi_s)=c\cdot |t-s|$$
for all $t,s\in[0,1]$;
\item the Monge-Ampère energy is affine along $\phi_t$, and it is the unique psh segment joining $\phi_0$ and $\phi_1$ with this property.
\end{enumerate}
\end{theorem}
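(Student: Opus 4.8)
The plan is to bootstrap everything from the continuous case (Theorem~\ref{maxprinciple_continuous}) using approximation of finite-energy metrics by decreasing nets of continuous psh metrics, combined with the fact that all our objects---psh segments, the Monge-Amp\`ere energy $E$, and (via the quantized expression $d_1 = \vol(\N_\bullet(\cdot),\N_\bullet(\cdot))$) the $d_1$ distance---behave well under decreasing limits. Concretely, write $\phi_0 = \lim_\downarrow \phi_0^j$ and $\phi_1 = \lim_\downarrow \phi_1^j$ with $\phi_i^j \in C^0(L)\cap\PSH(L)$, and let $t\mapsto \phi_t^j$ be the maximal psh segment joining $\phi_0^j$ and $\phi_1^j$ (which exists and is continuous by Theorem~\ref{maxprinciple_continuous}).

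First I would handle (2) and (3) together. Since $\phi_i^{j+1}\le \phi_i^j$ at the endpoints, monotonicity of the maximal segment (which follows from its characterization as a supremum of psh segments below the endpoints, exactly as in the continuous case but valid for arbitrary psh metrics) gives $\phi_t^{j+1}\le\phi_t^j$ for all $t$. Hence $\phi_t := \lim_\downarrow \phi_t^j$ is a decreasing limit of psh segments, so it is a psh segment joining $\phi_0,\phi_1$. To see it is \emph{maximal}: any psh segment $t\mapsto\psi_t$ with $\psi_i\le\phi_i$ satisfies $\psi_i\le\phi_i^j$ for all $j$, hence $\psi_t\le\phi_t^j$ by maximality in the continuous case, hence $\psi_t\le\phi_t$ at the limit. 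Uniqueness is then automatic. For (3), affineness of $t\mapsto E(\phi_t^j,\refmetric)$ (from Theorem~\ref{maxprinciple_continuous}(4)) together with upper semicontinuity of $E$ along decreasing nets gives $E(\phi_t,\refmetric)\ge \lim_j E(\phi_t^j,\refmetric) = (1-t)E(\phi_0,\refmetric) + tE(\phi_1,\refmetric) > -\infty$, so $\phi_t\in\cE^1(L)$; here one should first check this lower bound is sharp, i.e. that $E(\phi_t^j,\refmetric)\to E(\phi_t,\refmetric)$, which is precisely the continuity-along-decreasing-nets property of the extended energy. This also proves (5): $t\mapsto E(\phi_t,\refmetric)$ is the limit of affine functions with convergent endpoints, hence affine; and uniqueness of the affine-energy segment follows by the same argument as in the continuous case (Proposition~\ref{comparisonenergy}: a competing segment $\psi_t$ lies below $\phi_t$ by maximality, has the same affine energy, hence equals $\phi_t$)---one must check that comparison result extends to finite energy.

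For (1), the cleanest route is the quantized formula: I would show $d_1(\phi_0,\phi_1) = \vol(\N_\bullet(\phi_0),\N_\bullet(\phi_1))$ continues to make sense and equals $E(\phi_0,P(\phi_0,\phi_1)) + E(\phi_1,P(\phi_0,\phi_1))$ for finite-energy $\phi_0,\phi_1$ (Proposition~\ref{expressiond1}), after first establishing that $P(\phi_0,\phi_1)\in\cE^1(L)$ (Proposition~\ref{prop_envelopeisfiniteenergy}, e.g. via $P(\phi_0,\phi_1)\ge P(\phi_0^j \wedge \phi_1^j)$-type bounds and monotonicity of $E$). Symmetry and nonnegativity are then immediate, and the triangle inequality is inherited by passing to the decreasing limit in the finite-dimensional triangle inequalities for $k\mi d_1(\norm_k,\norm_k')$ exactly as in Proposition~\ref{prop_distanceoncontinuous}. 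Non-degeneracy: if $d_1(\phi_0,\phi_1)=0$ then $E(\phi_0,P(\phi_0,\phi_1)) = E(\phi_1,P(\phi_0,\phi_1)) = 0$ with $\phi_i\ge P(\phi_0,\phi_1)$, and the finite-energy analogue of Proposition~\ref{comparisonenergy} forces $\phi_0 = P(\phi_0,\phi_1) = \phi_1$. The final sentence---that (1) holds without continuity of envelopes---requires noting that the approximation by decreasing nets of continuous psh metrics, and the definition and basic functional-analytic properties of $E$ and $\vol$, never invoke continuity of envelopes; only the existence/continuity of the \emph{envelope} $P$ used in some intermediate manipulations does, and those can be circumvented (or $P(\phi_0,\phi_1)$ used merely as the psh metric $\mathrm{usc}\sup$, which is psh by Lemma~\ref{continuityofenvelopes} only under the hypothesis---so in the unconditional case one argues directly with sup-norms, since $\N_\bullet$ is insensitive to the distinction).

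For (4), I would again quantize: with $\norm_k^t$ the norm geodesic joining $\N_k(\phi_0^j)$ and $\N_k(\phi_1^j)$, the relation $d_1(\norm_k^t,\norm_k^s) = |t-s|\,d_1(\norm_k^0,\norm_k^1)$ passes to the limit in $k$ to give constant-speed geodesicity of $\phi_t^j$ for $d_1$; then one passes to the limit in $j$, using continuity of $d_1$ along the decreasing nets (which in turn rests on continuity of $E$ along decreasing nets, already used above). The main obstacle throughout is exactly this point: establishing that $E$, and hence $d_1$ and $\vol$, are genuinely \emph{continuous} (not merely semicontinuous) along the specific decreasing nets $\phi_t^j\downarrow\phi_t$ and $P(\phi_0^j,\phi_1^j)\downarrow P(\phi_0,\phi_1)$ that arise. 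Upper semicontinuity of $E$ is free; the matching lower bound needs the variational definition of $E$ on $\cE^1$ together with the fact that the approximants can be taken to dominate $\phi_t$ from above, so that $E(\phi_t,\refmetric) = \inf_{\psi\ge\phi_t, \psi\in\cH} E(\psi,\refmetric) \le \inf_j E(\phi_t^j,\refmetric)$, and this must be reconciled with the net $\phi_t^j$ \emph{decreasing} to $\phi_t$ rather than the $\psi\ge\phi_t$ ranging over $\cH$---so one actually wants $E(\phi_t,\refmetric) = \lim_j E(\phi_t^j,\refmetric)$, which is the standard continuity of the extended Monge-Amp\`ere energy along decreasing nets of psh metrics and should be cited or proven from the definitions in Section~\ref{mongeampereenergies}.
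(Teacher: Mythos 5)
Your plan matches the paper's proof almost point by point: approximate $\phi_0,\phi_1$ by decreasing nets of continuous psh metrics, import (2)--(5) from Theorem~\ref{maxprinciple_continuous} using monotonicity of maximal segments and continuity of $E$, $P$, $d_1$ along decreasing nets, invoke Proposition~\ref{comparisonenergy} for uniqueness and Proposition~\ref{expressiond1} for the $d_1$-limit, and use quantization for (4) and the metric axioms. That is exactly the structure of Sections~5.2--5.5.

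The one place where your sketch would fail as written is the aside for Proposition~\ref{prop_envelopeisfiniteenergy}: the inequality goes the wrong way. With decreasing approximations $\phi_i^j\downarrow\phi_i$ one has $\min(\phi_0,\phi_1)\le\min(\phi_0^j,\phi_1^j)$ and hence $P(\phi_0,\phi_1)\le P(\phi_0^j,\phi_1^j)$, not $\ge$, so monotonicity of $E$ gives you an \emph{upper} bound on $E(P(\phi_0,\phi_1),\refmetric)$, which is useless (you need a lower bound to show finiteness). There is no obvious finite-energy metric sitting \emph{below} $P(\phi_0,\phi_1)$, which is precisely why this step is non-trivial. The paper's mechanism is Lemma~\ref{lem_auxlemma}, a quantized $1$-Lipschitz property of the rooftop envelope, $d_1\bigl(P(\phi_0,\phi),P(\phi_1,\phi)\bigr)\le d_1(\phi_0,\phi_1)$, proved via $\N_\bullet(P(\phi,\phi'))=\N_\bullet(\phi)\vee\N_\bullet(\phi')$ and the finite-dimensional estimate from \cite{bjkstab}; combined with the cocycle property this traps $E(P(\phi_0,\phi_1),\refmetric)$ between finite quantities. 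You should replace your parenthetical with this argument (or cite the Lemma). Everything else in your proposal is sound and tracks the paper.
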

\noindent The author has been informed that Boucksom-Jonsson proved completeness of $(\cE^1,d_1)$ in the trivially valued case, if and only if continuity of envelope holds, in an article being written; it is expected that this is also true in the general case.

\subsection{Proof of Theorem \ref{thm_finiteenergyismetric} (1).}

As promised, we first show that our distance is well-defined:
\begin{prop}\label{prop_envelopeisfiniteenergy}
Given two metrics $\phi_0$, $\phi_1\in\cE^1(L)$, $P(\phi_0,\phi_1)$ belongs to $\cE^1$.
\end{prop}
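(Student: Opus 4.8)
The plan is to reduce the claim $E(P(\phi_0,\phi_1),\refmetric) > -\infty$ to the already-known fact that $P(\phi_0,\phi_1)$ is dominated from below by an explicit finite-energy competitor. First I would recall the basic monotonicity of $E$ in its first argument (stated in Section \ref{mongeampereenergies}): if $\psi \leq \psi'$ are psh and $\psi$ has finite energy, then so does $\psi'$, and conversely finiteness of $E$ propagates downward only along suitable bounds. So the strategy is to sandwich: clearly $P(\phi_0,\phi_1) \leq \min(\phi_0,\phi_1) \leq \phi_0$, which gives an upper bound $E(P(\phi_0,\phi_1),\refmetric) \leq E(\phi_0,\refmetric) < \infty$, so the real content is the lower bound $E(P(\phi_0,\phi_1),\refmetric) > -\infty$.

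For the lower bound, the key step is to exhibit a psh metric $\psi$ of finite energy with $\psi \leq \phi_0$ and $\psi \leq \phi_1$, so that by definition of the envelope $\psi \leq P(\phi_0,\phi_1)$, and then monotonicity of $E$ in the first variable gives $E(P(\phi_0,\phi_1),\refmetric) \geq E(\psi,\refmetric) > -\infty$. The natural candidate is $\psi = \tfrac12(\phi_0+\phi_1) - C$ for a suitable constant $C$: by Proposition \ref{prop_propertiesofpsh}(3) the convex combination $\tfrac12\phi_0 + \tfrac12\phi_1$ is $L$-psh, and since $E$ is, roughly speaking, convex along affine combinations in an appropriate sense — more precisely, since $\phi_0,\phi_1 \in \cE^1(L)$ one checks directly from the definition $E(\psi,\refmetric) = \inf\{E(\psi',\refmetric) : \psi' \geq \psi,\ \psi' \in \cH\}$ together with the increasing/cocycle properties that $\tfrac12\phi_0+\tfrac12\phi_1$ has finite energy (one can compare it to $\max(\phi_0,\phi_1) - c$, or use that on continuous approximants the energy of a midpoint is bounded below by the average of the energies up to a dimensional constant). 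It remains to arrange $\tfrac12(\phi_0+\phi_1) - C \leq \min(\phi_0,\phi_1)$; this needs $\tfrac12(\phi_0+\phi_1) - C \leq \phi_i$ for $i=0,1$, i.e. $\tfrac12(\phi_{1-i} - \phi_i) \leq C$. Since $\phi_0$ and $\phi_1$ are both psh metrics on the same ample $L$, their difference $\phi_0 - \phi_1$ is a bounded function on $X\an$ (being a difference of two metrics, hence a genuine function, and quasi-psh differences on a compact Berkovich space of an ample line bundle are bounded above — this follows from the fact that $\phi_0 - \refmetric$ and $\phi_1 - \refmetric$ are each bounded above, as psh metrics are bounded above by any continuous metric up to a constant). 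Taking $C = \tfrac12\sup_{X\an}|\phi_0 - \phi_1|$ works, and this supremum is finite.

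The main obstacle I anticipate is the finite-energy property of the convex combination $\tfrac12\phi_0 + \tfrac12\phi_1$: while intuitively clear from the complex-analytic analogue, in the non-Archimedean setting this should be argued carefully from the variational definition of the extended energy. The cleanest route is probably: approximate $\phi_i$ by decreasing nets $\phi_i^{(j)}$ of continuous psh metrics with $E(\phi_i^{(j)},\refmetric) \downarrow E(\phi_i,\refmetric) > -\infty$ (possible since $E$ is continuous along decreasing nets and $\phi_i \in \cE^1$), note $\tfrac12\phi_0^{(j)} + \tfrac12\phi_1^{(j)}$ is continuous psh and decreases to $\tfrac12\phi_0 + \tfrac12\phi_1$, and control $E(\tfrac12\phi_0^{(j)} + \tfrac12\phi_1^{(j)},\refmetric)$ from below uniformly in $j$ by a quantity like $\tfrac12 E(\phi_0^{(j)},\refmetric) + \tfrac12 E(\phi_1^{(j)},\refmetric)$ up to a constant — this last inequality being a standard convexity/monotonicity estimate for the Monge-Ampère energy on continuous metrics, which can be extracted from the cocycle property and the fact that $E$ is increasing, applied degree by degree to the bifunctional expression, or cited from \cite{boueri}. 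Passing to the limit then gives $E(\tfrac12\phi_0+\tfrac12\phi_1,\refmetric) > -\infty$, and the argument concludes as above. Alternatively, once the midpoint is handled, one could bypass it entirely by invoking quantization: $E = \vol = \vol(\N_\bullet(\cdot),\N_\bullet(\refmetric))$ is finite, and the sup-norms of a convex combination are controlled by the geometric means of the sup-norms of the factors, so finiteness of the asymptotic volume is inherited; but the direct estimate seems more transparent to write down.
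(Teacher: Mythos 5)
The central step of your plan---exhibiting a finite-energy psh competitor $\psi$ with $\psi\leq\phi_0$ and $\psi\leq\phi_1$, then using monotonicity of $E$ in the first argument---is sound in principle, but your candidate $\psi=\tfrac12(\phi_0+\phi_1)-C$ does not exist in general: the constant $C$ you want is typically infinite. Finite-energy metrics in $\cE^1(L)$ are in general unbounded below (that is essentially why one introduces $\cE^1$ beyond the bounded case), so $\phi_0-\phi_1$ need not be bounded in either direction. Your justification shows only that each $\phi_i-\refmetric$ is bounded \emph{above}; that gives no control on the difference $\phi_0-\phi_1=(\phi_0-\refmetric)-(\phi_1-\refmetric)$, which is a difference of two functions each bounded above but not below. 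Concretely, if $\phi_0$ is continuous and $\phi_1\in\cE^1(L)$ is unbounded below, then $\phi_0-\phi_1$ is unbounded above, so there is no finite $C$ with $\tfrac12(\phi_0+\phi_1)-C\leq\phi_1$: wherever $\phi_1\to-\infty$ and $\phi_0$ stays bounded, the midpoint tends to $-\infty$ at half the rate of $\min(\phi_0,\phi_1)$ and so lies above it by an unbounded amount. A secondary issue is that the finiteness of $E(\tfrac12\phi_0+\tfrac12\phi_1,\refmetric)$ itself rests on a concavity estimate for $E$ along affine segments which is asserted but not established in the portions of the paper you cite, and the ``control uniformly in $j$'' step is essentially restating the inequality to be proved.

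The paper's actual argument avoids competitors entirely. It approximates $\phi_0,\phi_1$ by decreasing nets $\phi_i^k$ of continuous psh metrics (arranged so that, say, $\phi_0^k\leq\refmetric$), invokes the $1$-Lipschitz property of rooftop envelopes relative to the $d_1$-distance on continuous psh metrics (Lemma \ref{lem_auxlemma}, the asymptotic form of \cite[Lemma 3.1]{bjkstab}), obtains the bound $E(P(\phi_0^k,\phi_1^k),P(\refmetric,\phi_1^k))\leq E(\phi_0^k,\refmetric)$, and then passes to the limit using continuity of both $E$ and $P$ along decreasing nets. This replaces the pointwise lower bound on $P(\phi_0,\phi_1)$---which fails for singular endpoints---with a quantitative energy bound on approximants that survives the passage to the limit. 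If you want to keep a competitor-based argument you would need a competitor that matches the singularities of $\min(\phi_0,\phi_1)$, not the ``averaged'' singularity of the midpoint, and no simple algebraic expression in $\phi_0,\phi_1,\refmetric$ seems to do this while remaining $L$-psh.
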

\begin{proof}
Fix a continuous $L$-psh reference metric $\refmetric$. Let, for $i=0,1$, $k\mapsto\phi_i^k$ be sequences of continuous psh metrics decreasing to $\phi_i$. Assuming $\refmetric\geq \phi^k_0$ for all (large enough) $k$, we then have from Lemma \ref{lem_auxlemma} and the fact that the distance of two comparable metrics is a volume:
\begin{align*}
0\leq\vol(P(\phi^k_0,\phi^k_1),P(\refmetric,\phi^k_1))&=E(P(\phi^k_0,\phi^k_1),P(\refmetric,\phi^k_1))\\
&\leq E(\phi^k_0,\refmetric),
\end{align*}
Since $E$ and $P$ are continuous along decreasing nets, this gives at the limit
$$0\leq E(P(\phi_0,\phi_1),P(\refmetric,\phi_1))\leq E(\phi_0,\refmetric)<\infty.$$
In particular, using the cocycle property, $E(P(\phi_0,\phi_1),\refmetric)$ is finite for any continuous psh reference metric, hence $P(\phi_0,\phi_1)\in\cE^1(L)$.
\end{proof}

\noindent The following Lemma was used in the proof of the previous Proposition.
\begin{lemma}\label{lem_auxlemma}
Let $\phi_0,\phi_1$ be continuous $L$-psh metrics. Then, for any continuous $L$-psh metric $\phi$, we have
$$d_1(P(\phi_0,\phi),P(\phi_1,\phi))\leq d_1(\phi_0,\phi_1).$$
\end{lemma}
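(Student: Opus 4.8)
The plan is to reduce the inequality, via the quantization of $d_1$, to an elementary monotonicity property of relative spectra of norms; everything else will be formal manipulation of the cocycle and ordering properties of $\vol$ and $P$.

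First I would set $u=\phi_0$, $v=\phi_1$, $w=\phi$. These are continuous psh, so by continuity of envelopes so are $P(u,w)$, $P(v,w)$, $P(u,v)$ and $\rho:=P(u,v,w)$, and one checks $P\big(P(u,w),P(v,w)\big)=\rho=P\big(P(u,w),v\big)=P\big(P(v,w),u\big)$, since on either side a psh metric lies below the expression exactly when it lies below $\min(u,v,w)$. Using $d_1(\phi',\psi')=\vol\big(\phi',P(\phi',\psi')\big)+\vol\big(\psi',P(\phi',\psi')\big)$ (recalled after the definition of $d_1$, with $E=\vol$ on continuous psh metrics by Theorem~\ref{theo_be}), the desired bound becomes $\vol\big(P(u,w),\rho\big)+\vol\big(P(v,w),\rho\big)\le\vol\big(u,P(u,v)\big)+\vol\big(v,P(u,v)\big)$, which would follow termwise from
$$\vol\big(a,P(a,w')\big)\le\vol\big(b,P(b,w')\big)\qquad\text{for continuous psh }a\le b\text{ and any continuous psh }w',$$
applied with $(a,b,w')=\big(P(u,w),u,v\big)$ and with $(a,b,w')=\big(P(v,w),v,u\big)$.

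To prove this last statement I would quantize. By the cocycle property, since $P(a,w')\le P(b,w')$, it is equivalent to $\vol\big(P(b,w'),P(a,w')\big)\le\vol(b,a)$. I would first establish that $\N_\bullet\big(P(\psi,w')\big)\sim\N_\bullet(\psi)\vee\N_\bullet(w')$ for all continuous psh $\psi,w'$: the right-hand side is a bounded graded norm, and $\FS_\bullet\big(\N_\bullet(\psi)\vee\N_\bullet(w')\big)$ is a psh metric which (since $\FS_m$ reverses order and $\FS_m\N_m(\psi)\le\psi$, $\FS_m\N_m(w')\le w'$ by Lemma~\ref{comparisonfdenvelope}) is $\le\min(\psi,w')$, hence $\le P(\psi,w')$; conversely $P(\psi,w')\le\psi,w'$ forces $\N_\bullet\big(P(\psi,w')\big)\ge\N_\bullet(\psi)\vee\N_\bullet(w')$, and applying $\FS_\bullet$ with $\FS_\bullet\circ\N_\bullet=\mathrm{id}$ gives the reverse inequality, so $\FS_\bullet\big(\N_\bullet(\psi)\vee\N_\bullet(w')\big)=P(\psi,w')$; applying $\N_\bullet$ and using $\N_\bullet\circ\FS_\bullet\sim\mathrm{id}$ — a formal consequence of Theorem~\ref{theo_reb} ($\FS_\bullet$ injective modulo asymptotic equivalence, $\FS_\bullet\circ\N_\bullet=\mathrm{id}$) — yields the claim. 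Since $\vol$ of bounded graded norms factors through asymptotic equivalence and is the limit of the finite-dimensional volumes, and $\N_\bullet(b)\le\N_\bullet(a)$, the matter reduces to the finite-dimensional inequality
$$\vol(\beta\vee\gamma,\alpha\vee\gamma)\le\vol(\beta,\alpha)\qquad\text{for norms }\beta\le\alpha\text{ on a finite-dimensional }\field\text{-space, }\gamma\text{ arbitrary}$$
(with $\beta=\N_m(b)$, $\alpha=\N_m(a)$, $\gamma=\N_m(w')$), which, writing $\vol$ via determinants, is equivalent to $\vol(\alpha,\alpha\vee\gamma)\le\vol(\beta,\beta\vee\gamma)$.

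This last inequality is, I expect, the crux, and it is elementary. Assuming $\field$ maximally complete (the general case following by the usual density argument with diagonalizable norms), I would codiagonalize $\alpha$ and $\gamma$: then $\alpha\vee\gamma$ is diagonal in the same basis, and a direct computation gives $\vol(\alpha,\alpha\vee\gamma)=\sum_i\max\{0,\lambda_i(\alpha,\gamma)\}$, and likewise $\vol(\beta,\beta\vee\gamma)=\sum_i\max\{0,\lambda_i(\beta,\gamma)\}$. It then suffices to observe that the relative spectrum is decreasing in its first argument, $\lambda_i(\beta,\gamma)\ge\lambda_i(\alpha,\gamma)$ whenever $\beta\le\alpha$, which is immediate from the description $\lambda_i(\norm,\norm')=\sup_{\dim W\le i}\inf_{w\in W-\{0\}}\log\frac{\norm'(w)}{\norm(w)}$. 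The genuinely delicate points are thus this finite-dimensional comparison together with the bookkeeping needed to carry it back through asymptotic equivalence to metrics; the rest is formal.
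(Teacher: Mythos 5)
Your proposal is correct and follows the same quantization strategy as the paper: identify $\N_\bullet\bigl(P(\psi,\phi)\bigr)$ with $\N_\bullet(\psi)\vee\N_\bullet(\phi)$, then reduce to a finite-dimensional inequality among norms via the $\N_\bullet$ and $\vol$ machinery. The difference is in how you handle the finite-dimensional step. The paper invokes, as a black box, the $d_1$-contractivity of $\norm\mapsto\norm\vee\gamma$ on spaces of norms (citing \cite[Lemma 3.1]{bjkstab}), namely $d_1(\alpha\vee\gamma,\beta\vee\gamma)\le d_1(\alpha,\beta)$. You instead first perform an extra decomposition, using that
$$P\bigl(P(u,w),P(v,w)\bigr)=P\bigl(P(u,w),v\bigr)=P\bigl(P(v,w),u\bigr)=P(u,v,w),$$
to split the desired $d_1$-inequality into two $\vol$-inequalities between \emph{comparable} metrics, $\vol\bigl(a,P(a,w')\bigr)\le\vol\bigl(b,P(b,w')\bigr)$ for $a\le b$. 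After quantizing, this becomes $\vol(\alpha,\alpha\vee\gamma)\le\vol(\beta,\beta\vee\gamma)$ for $\beta\le\alpha$, which you prove elementarily from the monotonicity of the relative spectrum $\lambda_i(\cdot,\gamma)$ in its first argument, together with the identity $\vol(\alpha,\alpha\vee\gamma)=\sum_i\max\{0,\lambda_i(\alpha,\gamma)\}$ in a codiagonalizing basis. In effect, you re-derive the content of \cite[Lemma 3.1]{bjkstab} rather than citing it, at the cost of a longer argument, but with the benefit of a self-contained exposition that exposes the elementary spectral mechanism behind the contractivity. (Minor remarks: the identity $\N_\bullet\circ\FS_\bullet\sim\mathrm{id}$ that you use is indeed the correct companion to $\FS_\bullet\circ\N_\bullet=\mathrm{id}$ implicit in Theorem \ref{theo_reb}, and the paper cites \cite[Theorem 7.27]{boueri} for precisely the resulting identification $\N_\bullet(P(\phi_0,\phi))\sim\N_\bullet(\phi_0)\vee\N_\bullet(\phi)$ that you rederive; also note that $\N_\bullet(\psi)\vee\N_\bullet(w')$ is automatically a bounded graded norm, since submultiplicativity and boundedness are stable under $\vee$, a point worth making explicit.)
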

\begin{proof}
This is essentially an asymptotic version of \cite[Lemma 3.1]{bjkstab}. By continuity of envelopes, the two metrics in the left-hand side are continuous (and psh), so that they define bounded graded supnorms via the $\N_\bullet$ operator. By \cite[Theorem 7.27]{boueri},
\begin{align*}
P(\phi_0,\phi)&=\FS_\bullet(\N_\bullet(\phi_0\wedge\phi))\\
&=\FS_\bullet(\N_\bullet(\phi_0)\vee\N_\bullet(\phi))
\end{align*}
(note that the statement of \cite[Theorem 7.27]{boueri} uses the envelope $Q$ which corresponds to the envelope defined by Fubini-Study metrics; but for continuous metrics, $P=Q$ by \cite[Proposition 7.26]{boueri}).
Similarly,
$$P(\phi_1,\phi)=\FS_\bullet(\N_\bullet(\phi_1)\vee \N_\bullet(\phi)),$$
i.e.
$$\N_\bullet(P(\phi_0,\phi))=\N_\bullet(\phi_0)\vee \N_\bullet(\phi)$$
and
$$\N_\bullet(P(\phi_1,\phi))=\N_\bullet(\phi_1)\vee \N_\bullet(\phi).$$
Now, for all $m$, by \cite[Lemma 3.1]{bjkstab},
$$d_1(\N_m(\phi_0)\vee \N_m(\phi),\N_m(\phi_1)\vee \N_m(\phi))\leq d_1(\N_m(\phi_0),\N_m(\phi_1)),$$
which at the limit and using the equalities above yields
$$d_1(\N_\bullet(P(\phi_0,\phi)),\N_\bullet(P(\phi_1,\phi)))\leq d_1(\N_\bullet(\phi_0),\N_\bullet(\phi_1)),$$
i.e. by definition
$$d_1(P(\phi_0,\phi),P(\phi_1,\phi))\leq d_1(\phi_0,\phi_1),$$
as promised.
\end{proof}

\noindent In order to prove that $d_1$ satisfies the triangle inequality, and also to make some later results easier to prove, we will approximate the $d_1$ distance as follows. We approximate two metrics $\phi_0$ and $\phi_1$ in $\cE^1(L)$ by sequences $(\phi^k_0)$, $(\phi_1^k)$ in $C^0(L)\cap\PSH(L)$ (or $\mathcal{H}(L)$). We will show that
$$d_1(\phi_0,\phi_1)=\lim_k d_1(\phi_0^k,\phi_1^k).$$
\begin{prop}\label{expressiond1}
Given two metrics $\phi_0$, $\phi_1\in\cE^1(L)$, and nets $(\phi_0^k)$, $(\phi_1^k)$ in $C^0(L)\cap\PSH(L)$ decreasing to $\phi_0$, $\phi_1$ we have
$$d_1(\phi_0,\phi_1)=\lim_k d_1(\phi_0^k,\phi_1^k).$$
\end{prop}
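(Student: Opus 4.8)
The plan is to treat this as a continuity statement and reduce it to two facts already at our disposal: the continuity of the Monge--Ampère energy $E$ along decreasing nets (Section \ref{mongeampereenergies}) and the continuity of the rooftop envelope $P$ along decreasing nets. The only structural difficulty is that in the expression $d_1(\phi_0^k,\phi_1^k)=E(\phi_0^k,P(\phi_0^k,\phi_1^k))+E(\phi_1^k,P(\phi_0^k,\phi_1^k))$ both arguments of each energy term move with $k$, so one cannot directly invoke continuity in a single variable. The remedy is to fix once and for all an auxiliary continuous psh reference metric $\refmetric$ and use the cocycle property to decouple the endpoints from the envelope.

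Concretely, I would first rewrite, using the cocycle property of the (extended) relative energy,
\[
d_1(\phi_0^k,\phi_1^k)=E(\phi_0^k,\refmetric)+E(\phi_1^k,\refmetric)-2\,E\bigl(P(\phi_0^k,\phi_1^k),\refmetric\bigr),
\]
which is legitimate because $\phi_0^k,\phi_1^k$ and, by continuity of envelopes, $P(\phi_0^k,\phi_1^k)$ are all continuous psh metrics, so each term on the right is finite. The same identity holds with $\phi_0,\phi_1$ in place of $\phi_0^k,\phi_1^k$; there finiteness of every term is guaranteed by $\phi_0,\phi_1\in\cE^1(L)$ together with Proposition \ref{prop_envelopeisfiniteenergy}, which ensures $P(\phi_0,\phi_1)\in\cE^1(L)$.

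Next I would pass to the limit termwise. Since $(\phi_i^k)_k$ decreases to $\phi_i$, continuity of $E$ along decreasing nets gives $E(\phi_i^k,\refmetric)\to E(\phi_i,\refmetric)$, a finite limit as $\phi_i\in\cE^1(L)$. For the envelope term, I would check that $k\mapsto P(\phi_0^k,\phi_1^k)$ is itself a decreasing net converging to $P(\phi_0,\phi_1)$: monotonicity of $P$ makes it decrease; it dominates $P(\phi_0,\phi_1)$ because $\min(\phi_0,\phi_1)\le\min(\phi_0^k,\phi_1^k)$; and its limit $\psi$ satisfies $\psi\le\min(\phi_0^k,\phi_1^k)$ for every $k$, hence $\psi\le\min(\phi_0,\phi_1)$, so, being psh, $\psi\le P(\phi_0,\phi_1)$ — the two inequalities force $\psi=P(\phi_0,\phi_1)$. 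Continuity of $E$ along this decreasing net then yields $E(P(\phi_0^k,\phi_1^k),\refmetric)\to E(P(\phi_0,\phi_1),\refmetric)$, finite again by Proposition \ref{prop_envelopeisfiniteenergy}. Recombining the three limits via the cocycle identity run in reverse gives $\lim_k d_1(\phi_0^k,\phi_1^k)=d_1(\phi_0,\phi_1)$.

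The one genuinely delicate point is keeping every intermediate quantity finite, so that the cocycle decomposition and recombination make sense; this is exactly where the hypothesis $\phi_0,\phi_1\in\cE^1(L)$ and Proposition \ref{prop_envelopeisfiniteenergy} are indispensable, since otherwise some $E(\cdot,\refmetric)$ would be $-\infty$ and the difference $\infty-\infty$ would be meaningless. A secondary, purely bookkeeping issue is that the argument must be carried out for nets rather than sequences, but this is harmless as $E$ and $P$ are continuous along decreasing nets, not merely decreasing sequences.
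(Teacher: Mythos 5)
Your proof is correct and follows essentially the same route as the paper: apply the Darvas formula, observe that $P(\phi_0^k,\phi_1^k)$ decreases to $P(\phi_0,\phi_1)$, and conclude by continuity of $E$ along decreasing nets. The one thing you do more explicitly is spell out the cocycle decomposition against a fixed $\refmetric$; in the paper this is already built into the definition of the extended relative energy $E(\phi,\phi')=E(\phi,\refmetric)-E(\phi',\refmetric)$, so the two arguments coincide.
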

\begin{proof}
By \cite[Remark 5.4.5]{reb}, i.e. the Darvas formula for $d_1$ on continuous psh metrics, we have for all $k$
\begin{align*}
d_1(\phi^k_0,\phi^k_1)&=E(\phi^k_0,P(\phi^k_0,\phi^k_1))+E(\phi^k_1,P(\phi^k_0,\phi^k_1)).
\end{align*}
$P$ is continuous along monotone (hence decreasing) nets, so that $P(\phi^k_0,\phi^k_1)$ decreases to $P(\phi_0,\phi_1)\in\cE^1(L)$, and the result follows by continuity of the energy along decreasing nets.
\end{proof}

\noindent We may now show that $\cE^1(L)$, endowed with $d_1$, is a metric space.
\begin{proof}[Proof of Theorem \ref{thm_finiteenergyismetric}, (1).]
Symmetry is immediate. The triangle inequality follows from Proposition \ref{expressiond1} and the triangle inequality of $d_1$ on continuous psh metrics, so that we only have to show that our distance does indeed separate points.

\bsni Assume first that $\phi_0\geq\phi_1$, so that the distance is in fact a Monge-Ampère energy. Then, Proposition \ref{comparisonenergy} gives $\phi_0=\phi_1$.

\bsni In the general case, we use Corollary \ref{expressiond1} to find
$$0=d_1(\phi_0,\phi_1)=E(\phi_0,P(\phi_0,\phi_1))+E(\phi_1,P(\phi_0,\phi_1)).$$
Both quantities on the right-hand side are positive, which yields
$$\phi_0=P(\phi_0,\phi_1)=\phi_1,$$
by the previous argument, proving our result.
\end{proof}

\subsection{A result concerning comparable metrics with zero relative energy.}

We have used, in the proof of Theorem \ref{maxprinciple_continuous}, the fact that if two comparable metrics have the same Monge-Ampère energy, then they are equal. The most natural setting for this result is that of finite-energy metrics, and indeed we will use it in its full generality in the proof of Theorem \ref{thm_finiteenergyismetric}. In this section, we prove this result. We will need another bifunctional acting on continuous psh metrics, the $I$ energy.

\begin{defi}
Let $\phi_0$, $\phi_1$ be two continuous $L$-psh metrics. Their relative $I$-energy is defined as
$$I(\phi_0,\phi_1)=\int_X (\phi_0-\phi_1)(\MA(\phi_1)-\MA(\phi_0)).$$
Their relative $J$-energy is defined as
$$J(\phi_0,\phi_1)=-E(\phi_0,\phi_1)+\int_X (\phi_0-\phi_1)\MA(\phi_1).$$
Given a reference metric $\refmetric$, we write
$$I(\phi_0)=I(\phi_0,\refmetric)$$
and
$$J(\phi_0)=J(\phi_0,\refmetric).$$
By \cite[Section 6.3]{bjsemi}, the functionals $I$ and $J$ also admit an extension to $\cE^1(L)$, which is continuous along decreasing nets.
\end{defi}
\noindent Note that we have cy \cite[(3.15)]{bjsemi}
$$\int_X (\phi_0-\phi_1)\MA(\phi_0)\leq E(\phi_0,\phi_1)\leq \int_X (\phi_0-\phi_1)\MA(\phi_0),$$
so that the $I$-energy is always nonnegative. This result relies on a special case of the local Hodge index theorem, as in \cite[Proposition 3.5]{bjsemi}. That the $J$-energy is nonnegative follows from the very expression of the Monge-Ampère energy.

\begin{prop}\label{comparisonenergy}
Let $\phi_0$, $\phi_1\in\cE^1(L).$ If $E(\phi_0,\phi_1)=0$ and $\phi_0\geq\phi_1$, then $\phi_0=\phi_1$.
\end{prop}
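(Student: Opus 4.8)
The plan is to reduce the desired equality to a vanishing statement for a Monge--Ampère integral, and then to upgrade it using a domination principle. First I would exploit the inequality recalled just before the statement in the form $\int_X(\phi_0-\phi_1)\MA(\phi_0)\le E(\phi_0,\phi_1)$, which is precisely the nonnegativity of the relative $J$-energy, since $J(\phi_1,\phi_0)=E(\phi_0,\phi_1)-\int_X(\phi_0-\phi_1)\MA(\phi_0)$. As $E$ and $J$ extend to $\cE^1(L)$ and are continuous along decreasing nets, this inequality persists for $\phi_0,\phi_1\in\cE^1(L)$ (alternatively one approximates from above: pick $\phi_1^k\downarrow\phi_1$ and $\psi_0^k\downarrow\phi_0$ continuous psh, and set $\phi_0^k:=\max(\psi_0^k,\phi_1^k)$, which is continuous psh, dominates $\phi_1^k$, and decreases to $\phi_0$; then apply the continuous case and pass to the limit). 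Feeding in the hypothesis $E(\phi_0,\phi_1)=0$ gives $\int_X(\phi_0-\phi_1)\MA(\phi_0)\le 0$, while $\phi_0-\phi_1\ge 0$ together with positivity of $\MA(\phi_0)$ gives the opposite inequality; hence $\int_X(\phi_0-\phi_1)\MA(\phi_0)=0$, and since the integrand is nonnegative, $\phi_0=\phi_1$ holds $\MA(\phi_0)$-almost everywhere. (For continuous $\phi_0,\phi_1$ one may bypass $J$: all $d+1$ terms in $E(\phi_0,\phi_1)=\frac1{d+1}\sum_{i=0}^d\int_X(\phi_0-\phi_1)\MA(\phi_0^{(i)},\phi_1^{(d-i)})$ are nonnegative, hence all vanish.)

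It then remains to upgrade almost-everywhere equality to equality everywhere, and here I would invoke the domination principle for finite-energy metrics (in the spirit of Boucksom--Favre--Jonsson, \cite{bfjams}): if $\phi_0,\phi_1\in\cE^1(L)$ and $\phi_0\le\phi_1$ holds $\MA(\phi_0)$-almost everywhere, then $\phi_0\le\phi_1$ everywhere on $X\an$. Applied to our situation (where $\phi_0=\phi_1$ $\MA(\phi_0)$-a.e.) this gives $\phi_0\le\phi_1$ on all of $X\an$, and combined with the standing hypothesis $\phi_0\ge\phi_1$ we conclude $\phi_0=\phi_1$.

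The step I expect to be the crux is the domination principle in the generality at hand --- an arbitrary complete non-Archimedean base field, and no continuity-of-envelopes assumption. If one prefers to keep the argument elementary, for \emph{continuous} $\phi_0,\phi_1$ one can argue directly: the vanishing of all mixed integrals above also yields $I(\phi_0,\phi_1)=0$, and after integration by parts $I(\phi_0,\phi_1)$ is a sum of Dirichlet-type energies $\int_X d(\phi_0-\phi_1)\wedge d^c(\phi_0-\phi_1)\wedge(dd^c\phi_0)^j\wedge(dd^c\phi_1)^{d-1-j}$; the local Hodge index theorem underlying \cite[Proposition 3.5]{bjsemi} then forces $\phi_0-\phi_1$ to be constant on the connected space $X\an$, and this constant must be $0$ because $E(\phi_0,\phi_1)=0$. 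The general $\cE^1$ case would follow from this by the decreasing approximation above and continuity of $E$ along decreasing nets.
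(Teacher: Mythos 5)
Your proposal reduces the problem to a domination (or uniqueness-of-solutions) principle for Monge--Ampère measures of finite-energy metrics, which you correctly flag as the crux. This is precisely the step the paper avoids, and for good reason: such a domination principle is not obviously available in the generality the paper works in (an arbitrary complete non-Archimedean base field, and $\cE^1$ metrics rather than, say, model metrics), and you do not supply a proof of it. The references to \cite{bfjams} concern the trivially/discretely valued, residue-characteristic-zero setting, so they cannot be cited directly. Your auxiliary argument via the local Hodge index theorem --- $I(\phi_0,\phi_1)=0$ forces $\phi_0-\phi_1$ constant --- is exactly the content of the domination/uniqueness principle you would need, so this does not close the circle either: you are assuming the result you want. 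As a result the proposal, as written, has a genuine gap.

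The paper's proof takes a different, more elementary route that sidesteps domination entirely. After approximating by Fubini--Study metrics with $\phi_0^k\ge\phi_1^k$ and deducing $I(\phi_0^k,\phi_1^k)\to 0$ from nonnegativity of each term in the $E$-expansion, it invokes the quantitative continuity estimate
\[
\int_X(\phi_0-\phi_1)(\MA(\phi_2)-\MA(\phi_3))\le C\, I(\phi_0,\phi_1)^a\, I(\phi_2,\phi_3)^a\,\max_i J(\phi_i)^b
\]
from \cite[Corollary 3.20]{bjsemi}, applied with $\mu=\MA(\phi)$ chosen so that $\mu$ has a positive atom at an arbitrary divisorial point $x$ (such $\mu$ exist by \cite[Example 8.11]{boueri}). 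This yields $\mu(\{x\})(\phi_0(x)-\phi_1(x))\le 0$ in the limit, hence $\phi_0(x)=\phi_1(x)$ at every divisorial point, and then $\phi_0=\phi_1$ since psh metrics are determined by their restriction to $X^{\mathrm{div}}$. The key inputs (the Hölder-type estimate and the existence of atomic Monge--Ampère measures at divisorial points) are available at the level of generality required, whereas your domination principle would need independent justification. If you want to repair your argument, you would either need to prove the domination principle in this generality, or replace that step by the atomic-measure argument, at which point you have essentially reconstructed the paper's proof.
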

\begin{proof}
The main argument has been communicated to the author by S. Boucksom and M. Jonsson, as part of works on finite-energy spaces currently in writing.

\bsni Approximate $\phi_0$ and $\phi_1$ by decreasing nets $\phi_0^k$, $\phi_1^k\in\cH(L)$. Up to taking the maximum of the two sequences, we can assume without loss of generality that for all $k$, $\phi_0^k\geq \phi_1^k$. We have that
$$E(\phi^k_0,\phi^k_1)=\frac1{\dim X+1}\sum_i\int_X (\phi^k_0-\phi^k_1)\MA(\phi^k_0{}^{(i)},\phi^k_1{}^{(\dim X-i)}),$$
in the notations of Section \ref{mongeampereenergies}. Since $\phi_0\geq\phi_1$, all of the terms in the above sum are integrals against positive measures of nonnegative functions, hence they are all positive. In particular,
$$0\leq I(\phi^k_0,\phi^k_1)=\int_X (\phi^k_0-\phi^k_1)\MA(\phi^k_1)\leq (\dim X+1)E(\phi^k_0,\phi^k_1)\to 0,$$
where the vanishing follows from continuity of $E$ along decreasing nets, and the fact that $E(\phi_0,\phi_1)=0$.

\bsni Pick any positive measure $\mu$ that can be expressed as $\MA(\phi)$ for some $\phi\in\cH(L)$, and write for $x\in X\an$
\begin{align*}
&\mu(\{x\})(\phi^k_0(x)-\phi^k_1(x)) - \int_X (\phi^k_0-\phi^k_1)\MA(\phi^k_1)
\\&=\int_X \mu(\{x\}) (\phi^k_0-\phi^k_1)\delta_x-\int_X (\phi^k_0-\phi^k_1)\MA(\phi^k_1)\\
&\leq \int_X (\phi^k_0-\phi^k_1) \,\mu-\int_X (\phi^k_0-\phi^k_1)\MA(\phi^k_1)\\
&\leq \int_X (\phi^k_0-\phi^k_1) (\mu-\MA(\phi^k_1)).
\end{align*}
By \cite[Corollary 3.20]{bjsemi}, given four Fubini-Study metrics $\phi_{i}\in\cH(L)$, $i\in\{0,1,2,3\}$ there exists constants $C,a,b$ depending only on $\dim X$ such that
$$\int_X (\phi_0 - \phi_1)(\MA(\phi_2)-\MA(\phi_3))\leq C\cdot I(\phi_0,\phi_1)^a\cdot I(\phi_2,\phi_3)^a\cdot \max_i J(\phi_i)^b.$$
In our case, we then have
$$\int_X (\phi^k_0-\phi^k_1) (\mu-\MA(\phi^k_1))\leq  C\cdot I(\phi_0^k,\phi_1^k)^a\cdot I(\phi,\phi_1^k)^a\cdot \max(J(\phi_0^k),J(\phi_1^k),J(\phi))^b,$$
recalling that we have defined $\mu=\MA(\phi)$. Now, by continuity of the extensions of $I$ and $J$ along decreasing nets,
$$I(\phi,\phi_1^k)^a\cdot \max(J(\phi_0^k),J(\phi_1^k),J(\phi))^b\to I(\phi,\phi_1)^a\cdot\max(J(\phi_0),J(\phi_1),J(\phi))^b$$
while we have established before that
$$I(\phi_0^k,\phi_1^k)\to 0.$$
We then find that
\begin{align*}
\mu(\{x\})(\phi^k_0(x)-\phi^k_1(x)) - \int_X (\phi^k_0-\phi^k_1)\MA(\phi^k_1)\leq C&\cdot I(\phi_0^k,\phi_1^k)^a\cdot I(\phi,\phi_1^k)^a\\
&\cdot \max(J(\phi_0^k),J(\phi_1^k),J(\phi))^b
\end{align*}
and the right-hand side vanishes, while the left-hand side converges as $k\to\infty$ to the nonnegative quantity
$$\mu(\{x\})(\phi_0(x)-\phi_1(x)).$$
The key point now is to solve the non-Archimedean Monge-Ampère equation in order to find a measure $\mu=\MA(\phi)$ with positive Dirac mass at $x$. Now, we recall that a psh function is uniquely determined by its restriction to the set of divisorial points in $X\an$, and that for any such point $x$ we may find a Monge-Ampère measure $\mu_x$ associated to a projective model of $X$ which has an atom at $x$, as in \cite[Example 8.11]{boueri}. As we then have
$$0\leq \mu_x(\{x\})(\phi_0(x)-\phi_1(x))=0,$$
and $\mu_x(\{x\})>0$, we have that $\phi_0=\phi_1$ on all divisorial points of $X\an$, hence on $X\an$.
\end{proof}

\subsection{Proof of Theorem \ref{maxprinciple}.}\label{sect_proofthm}

Consider now two metrics $\phi_0$, $\phi_1\in\PSH(L)$, and pick decreasing nets $\phi_0^k$, $\phi_1^k$ in $C^0(L)\cap\PSH(L)$ converging to $\phi_0$, $\phi_1$.
\begin{lemma}
Let $\phi_0\leq \phi'_0$, $\phi_1\leq\phi'_1$ be continuous psh metrics, and denote by $\phi_t$, $\phi'_t$ the maximal psh segments joining them. Then, for all $t$, $\phi_t\leq\phi'_t$.
\end{lemma}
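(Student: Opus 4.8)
The plan is to read off the statement directly from the description of maximal psh segments as suprema, which is Lemma \ref{perron}. By Theorem \ref{maxprinciple_continuous}(1) and Lemma \ref{perron}, for every $t$ the function $x\mapsto\phi_t(x)$ is the pointwise supremum of $\psi_t(x)$ over all psh segments $\psi$ with $\psi_0\leq\phi_0$ and $\psi_1\leq\phi_1$; likewise $\phi'_t$ is the supremum over psh segments $\psi$ with $\psi_0\leq\phi'_0$ and $\psi_1\leq\phi'_1$. First I would note that, since $\phi_0\leq\phi'_0$ and $\phi_1\leq\phi'_1$, every competitor $\psi$ in the supremum defining $\phi_t$ is automatically a competitor in the supremum defining $\phi'_t$, because $\psi_0\leq\phi_0\leq\phi'_0$ and $\psi_1\leq\phi_1\leq\phi'_1$. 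Hence $\psi_t\leq\phi'_t$ for each such $\psi$ and each $t$, and taking the supremum over all of them gives $\phi_t\leq\phi'_t$ for all $t$, which is the claim.

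For robustness I would also record the alternative argument via the quantized description of the maximal segment, namely $\phi_t=\lim_k\FS_k(\norm^t_k)$, where $\norm^t_k$ is the norm geodesic joining $\N_k(\phi_0)$ and $\N_k(\phi_1)$ (and similarly $\phi'_t=\lim_k\FS_k(\norm'^{\,t}_k)$). Since the operator $\N_k$ reverses the order on metrics, $\phi_i\leq\phi'_i$ yields $\N_k(\phi_i)\geq\N_k(\phi'_i)$ for $i=0,1$; Proposition \ref{fdmaxprinciple} (monotonicity of norm geodesics with respect to their endpoints) then forces $\norm^t_k\geq\norm'^{\,t}_k$ for all $t$; and since $\FS_k$ also reverses order we get $\FS_k(\norm^t_k)\leq\FS_k(\norm'^{\,t}_k)$. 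Passing to the limit in $k$ gives $\phi_t\leq\phi'_t$.

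The argument is short and I do not expect a genuine analytic obstacle. The only points requiring a little care are bookkeeping: keeping track of the direction of the inequalities (in the ``maximal segment'' convention the endpoints of competitors are only required to be $\leq$ the prescribed data, not equal to them), checking the order-reversing behaviour of $\N_k$ and $\FS_k$ under the additive sign conventions in use, and recalling that by Lemma \ref{theoremcontinuitypsh} both maximal segments are continuous up to $t=0,1$ with exactly the prescribed endpoints, so that the statement is meaningful and $\phi_t\leq\phi'_t$ at the endpoints is consistent with $\phi_0\leq\phi'_0$, $\phi_1\leq\phi'_1$.
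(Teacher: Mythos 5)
Your first argument is correct and is essentially the paper's proof, which is a one-liner: since $\phi_t$ is itself a psh segment joining endpoints $\phi_0\leq\phi'_0$ and $\phi_1\leq\phi'_1$, maximality of $\phi'_t$ immediately gives $\phi_t\leq\phi'_t$ (you reach the same conclusion by taking a supremum over all competitors of $\phi_t$, which is a slightly longer route to the same place). The alternative quantized argument via $\N_k$, Proposition \ref{fdmaxprinciple}, and $\FS_k$ is also valid, but it is not what the paper uses here and is more machinery than the statement requires.
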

\begin{proof}
By definition, if $\phi'_t$ is maximal, it bounds from above all segments joining endpoints bounded above by $\phi'_0$, $\phi'_1$, and the result follows.
\end{proof}
\noindent Therefore, the net $k\mapsto \phi_t^k$ is monotonous, where $\phi_t^k$ is the maximal psh segment joining $\phi_0^k$ and $\phi_1^k$. We claim that 
$$\phi_t :t\mapsto \lim_k \phi_t^k$$
is our desired geodesic segment.

\begin{proof}[Proof of Theorem \ref{maxprinciple}.]If no psh segment exists between $\phi_0$ and $\phi_1$, the first statement of the Theorem is proven. 

\bsni Assume now that there exist psh segments between $\phi_0$ and $\phi_1$. Let $t\mapsto \psi_t$ be such a segment. It is then a decreasing limit of a net of psh segments $t\mapsto \psi_t^k$. For all $k$, let $t\mapsto \psi_t'^k$ denote the maximal psh segment joining $\psi_0^k$ and $\psi_1^k$. By maximality, we have for all $t$, $k$,
$$\psi_t^k\leq \psi_t'^k.$$
In particular, $\lim_k\psi_t^k\leq \lim_k \psi_t'^k$, and both are psh segments between $\phi_0$ and $\phi_1$. This shows that one needs only consider limits of maximal psh segments. Furthermore, by the same argument, one needs only consider sequences with endpoints equal to $\phi_0$ and $\phi_1$.

\bsni Therefore, we must show that given any two nets of maximal segments $\phi_t^k$, $\psi_t^k$, such that the endpoints converge to $\phi_0$ and $\phi_1$, the limits are equal for all $t$:
$$\lim_k \phi_t^k=\lim_k \psi_t^k.$$
But
$$\lim_k \phi_t^k=\lim_n \sup\{\varphi^k_t\text{ psh segment between }\phi_0^k\text{ and }\phi_1^k\},$$
and similarly for $\psi_t^k$. Both nets converge to the limit
$$\sup\{\varphi_t\text{ psh segment between }\phi_0\text{ and }\phi_1\},$$
which depends only of the endpoints $\phi_0$ and $\phi_1$, proving the Theorem.
\end{proof}

\subsection{Proof of Theorem \ref{thm_finiteenergyismetric}, (2) to (5).}

From the previous section, the unique maximal psh segment joining two finite-energy metrics $\phi_0$, $\phi_1$ can be recovered as the limit $\phi_t$ of maximal segments $\phi_t^k$ joining decreasing approximations $\phi_0^k$, $\phi_1^k$ of those metrics, in $C^0(L)\cap\PSH(L)$. It could a priori be the case that this leaves the class $\cE^1(L)$. Theorem \ref{thm_finiteenergyismetric}(3) will ensure that this is not the case.

\begin{proof}[Proof of Theorem \ref{thm_finiteenergyismetric} (2)-(5).]
As we have just discussed, existence (i.e. (2) in the Theorem) is ensured by Theorem \ref{maxprinciple}. For all $k$, and for any reference metric $\refmetric\in C^0(L)\cap\PSH(L)$,
$$t\mapsto E(\phi_t^k,\refmetric)$$
is affine, with coefficient equal to $E(\phi_1^k,\phi_0^k)$. By continuity of the energy along decreasing nets, the limit function
$$t\mapsto E(\phi_t,\refmetric)$$
is therefore affine, with coefficient equal to the (finite) energy $E(\phi_0,\phi_1)$. This gives (3). 

\bsni Furthermore, by Proposition \ref{comparisonenergy}, that $\phi_t$ is the only possible psh segment with the property that the Monge-Ampère energy is affine along it is proven using the same arguments as the proof of Theorem \ref{maxprinciple_continuous}(4), establishing (5). To show that they are geodesics for our extended $d_1$ distance on $\cE^1(L)$ again follows from the fact that the segments $\phi_t^k$ are $d_1$-geodesic, by Theorem \ref{maxprinciple_continuous}: for all $t$, $t'$,
$$d_1(\phi_t^k,\phi_{t'}^k)=|t-t'|d_1(\phi_0^k,\phi_1^k),$$
and taking the limit in $k$, using Proposition \ref{expressiond1}.
\end{proof}

\newpage

\bibliographystyle{alpha}
\bibliography{bib}

\newcommand{\etalchar}[1]{$^{#1}$}
\begin{thebibliography}{GJKM19}

\bibitem[BB10]{bberballs}
Robert Berman and S{\'e}bastien Boucksom.
\newblock Growth of balls of holomorphic sections and energy at equilibrium.
\newblock {\em Inventiones mathematicae}, 181(2):337--394, 2010.

\bibitem[BBJ15]{bfj}
Robert Berman, S{\'e}bastien Boucksom, and Mattias Jonsson.
\newblock A variational approach to the {Y}au-{T}ian-{D}onaldson conjecture.
\newblock {\em arXiv preprint arXiv:1509.04561}, 2015.

\bibitem[BDL17]{bdl}
Robert Berman, Tam{\'a}s Darvas, and Chinh Lu.
\newblock Convexity of the extended {K}-energy and the large time behavior of
  the weak {C}alabi flow.
\newblock {\em Geometry \& Topology}, 21(5):2945--2988, 2017.

\bibitem[BE20]{boueri}
S{\'e}bastien Boucksom and Dennis Eriksson.
\newblock Spaces of norms, determinant of cohomology and {F}ekete points in
  non-{A}rchimedean geometry.
\newblock {\em arXiv preprint arXiv:1805.01016v2}, 2020.

\bibitem[BEG{\etalchar{+}}10]{begz}
S{\'e}bastien Boucksom, Philippe Eyssidieux, Vincent Guedj, Ahmed Zeriahi,
  et~al.
\newblock Monge--{A}mp{\`e}re equations in big cohomology classes.
\newblock {\em Acta mathematica}, 205(2):199--262, 2010.

\bibitem[Ber90]{berko}
Vladimir~G Berkovich.
\newblock {\em Spectral theory and analytic geometry over non-{A}rchimedean
  fields}.
\newblock Number~33. American Mathematical Soc., 1990.

\bibitem[Ber09]{berndtprob}
Bo~Berndtsson.
\newblock Probability measures related to geodesics in the space of
  {K}{\"a}hler metrics.
\newblock {\em arXiv preprint arXiv:0907.1806}, 2009.

\bibitem[BFJ15]{bfjams}
S{\'e}bastien Boucksom, Charles Favre, and Mattias Jonsson.
\newblock Solution to a non-{A}rchimedean {M}onge-{A}mp{\`e}re equation.
\newblock {\em Journal of the American Mathematical Society}, 28(3):617--667,
  2015.

\bibitem[BGR84]{bgr}
Siegfried Bosch, Ulrich G{\"u}ntzer, and Reinhold Remmert.
\newblock {\em Non-{A}rchimedean analysis}.
\newblock Springer-Verlag, Berlin, 1984.

\bibitem[BJ18a]{bjkstab}
S{\'e}bastien Boucksom and Mattias Jonsson.
\newblock A non-{A}rchimedean approach to {K}-stability.
\newblock {\em arXiv preprint arXiv:1805.11160}, 2018.

\bibitem[BJ18b]{bjsemi}
S{\'e}bastien Boucksom and Mattias Jonsson.
\newblock Singular semipositive metrics on line bundles on varieties over
  trivially valued fields.
\newblock {\em arXiv preprint arXiv:1801.08229}, 2018.

\bibitem[C{\etalchar{+}}00]{xxchen}
Xiuxiong Chen et~al.
\newblock The space of {K}{\"a}hler metrics.
\newblock {\em Journal of Differential Geometry}, 56(2):189--234, 2000.

\bibitem[CLD12]{cld}
Antoine Chambert-Loir and Antoine Ducros.
\newblock Formes différentielles réelles et courants sur les espaces de
  {B}erkovich.
\newblock {\em arXiv preprint arXiv:1204.6277}, 2012.

\bibitem[CM15a]{cmac}
Huayi Chen and Catriona Maclean.
\newblock Distribution of logarithmic spectra of the equilibrium energy.
\newblock {\em manuscripta mathematica}, 146(3-4):365--394, 2015.

\bibitem[CM15b]{chenmoriwaki}
Huayi Chen and Atsushi Moriwaki.
\newblock Extension property of semipositive invertible sheaves over a
  non-{A}rchimedean field.
\newblock {\em arXiv preprint arXiv:1510.06921}, 2015.

\bibitem[CS18]{comisha}
A~Barr{\'\i}a Comicheo and K~Shamseddine.
\newblock Summary on non-{A}rchimedean valued fields.
\newblock {\em Advances in Ultrametric Analysis Contemp. Math}, 704:1--36,
  2018.

\bibitem[Dar15]{darmabuchigeometry}
Tam{\'a}s Darvas.
\newblock The {M}abuchi geometry of finite energy classes.
\newblock {\em Advances in Mathematics}, 285:182--219, 2015.

\bibitem[Dar19]{dargppt}
Tam{\'a}s Darvas.
\newblock Geometric pluripotential theory on {K}{\"a}hler manifolds.
\newblock {\em arXiv preprint arXiv:1902.01982}, 2019.

\bibitem[DLR20]{darquantization}
Tam{\'a}s Darvas, Chinh~H Lu, and Yanir~A Rubinstein.
\newblock Quantization in geometric pluripotential theory.
\newblock {\em Communications on Pure and Applied Mathematics},
  73(5):1100--1138, 2020.

\bibitem[DR16]{darrooftop}
Tam{\'a}s Darvas and Yanir~A Rubinstein.
\newblock Kiselman's principle, the {D}irichlet problem for the
  {M}onge-{A}mp{\`e}re equation, and rooftop obstacle problems.
\newblock {\em Journal of the Mathematical Society of Japan}, 68(2):773--796,
  2016.

\bibitem[G{\'e}r81]{girardin}
Paul G{\'e}rardin.
\newblock Immeubles des groupes lin{\'e}aires g{\'e}n{\'e}raux.
\newblock In {\em Non Commutative Harmonic Analysis and Lie Groups}, pages
  138--178. Springer, 1981.

\bibitem[GJKM19]{gkjm}
Walter Gubler, Philipp Jell, Klaus K{\"u}nnemann, and Florent Martin.
\newblock Continuity of plurisubharmonic envelopes in non-{A}rchimedean
  geometry and test ideals.
\newblock In {\em Annales de l'Institut Fourier}, volume~69, pages 2331--2376,
  2019.

\bibitem[Gub07]{gublerinventionestropical}
Walter Gubler.
\newblock Tropical varieties for non-{A}rchimedean analytic spaces.
\newblock {\em Inventiones mathematicae}, 169(2):321--376, 2007.

\bibitem[Kis78]{kisel1}
Christer~O Kiselman.
\newblock The partial {L}egendre transformation for plurisubharmonic functions.
\newblock {\em Inventiones mathematicae}, 49(2):137--148, 1978.

\bibitem[Kis94]{kisel2}
Christer~O Kiselman.
\newblock Plurisubharmonic functions and their singularities.
\newblock In {\em Complex Potential Theory}, pages 273--323. Springer, 1994.

\bibitem[LTW19]{ltw}
Chi Li, Gang Tian, and Feng Wang.
\newblock The uniform version of yau-tian-donaldson conjecture for singular
  fano varieties.
\newblock {\em arXiv preprint arXiv:1903.01215}, 2019.

\bibitem[Mab87]{mabuchi}
Toshiki Mabuchi.
\newblock Some symplectic geometry on compact k{\"a}hler manifolds. i.
\newblock {\em Osaka journal of mathematics}, 24(2):227--252, 1987.

\bibitem[Poi10]{poineau}
J{\'e}r{\^o}me Poineau.
\newblock {\em La droite de {B}erkovich sur $\mathbb{Z}$}.
\newblock Soci{\'e}t{\'e} math{\'e}matique de France, 2010.

\bibitem[Poo93]{poonen}
Bjorn Poonen.
\newblock Maximally complete fields.
\newblock {\em Enseign. Math}, 39(1-2):87--106, 1993.

\bibitem[Pr{\'e}73]{prekopa}
Andr{\'a}s Pr{\'e}kopa.
\newblock On logarithmic concave measures and functions.
\newblock {\em Acta Scientiarum Mathematicarum}, 34:335--343, 1973.

\bibitem[PS06]{phongsturm}
Duong~H Phong and Jacob Sturm.
\newblock The {M}onge-{A}mpere operator and geodesics in the space of
  {K}{\"a}hler potentials.
\newblock {\em Inventiones mathematicae}, 166(1):125--149, 2006.

\bibitem[Reb20]{reb}
R{\'e}mi Reboulet.
\newblock The asymptotic {F}ubini-{S}tudy operator over general
  non-{A}rchimedean fields.
\newblock {\em arXiv preprint arXiv:2004.11635}, 2020.

\bibitem[RN14]{rwnanalytic}
Julius Ross and David~Witt Nystr{\"o}m.
\newblock Analytic test configurations and geodesic rays.
\newblock {\em Journal of Symplectic Geometry}, 12(1):125--169, 2014.

\bibitem[RTW15]{bremy}
Bertrand Remy, Amaury Thuillier, and Annette Werner.
\newblock Bruhat-{T}its buildings and analytic geometry.
\newblock In {\em Berkovich spaces and applications}, pages 141--202. Springer,
  2015.

\bibitem[Sem92]{semmes}
Stephen Semmes.
\newblock Complex {M}onge-{A}mp{\`e}re and symplectic manifolds.
\newblock {\em American Journal of Mathematics}, pages 495--550, 1992.

\bibitem[Thu05]{thu}
Amaury Thuillier.
\newblock {\em Th{\'e}orie du potentiel sur les courbes en g{\'e}om{\'e}trie
  analytique non {A}rchim{\'e}dienne. {A}pplications {\`a} la th{\'e}orie
  d'{A}rakelov}.
\newblock PhD thesis, 2005.

\end{thebibliography}

\end{document}